\documentclass[12pt]{article}
\usepackage{amsmath, amssymb, amscd, amsthm, amsfonts}
\usepackage[title]{appendix}
\usepackage{graphicx, graphics}
\usepackage{blindtext}
\usepackage{titlesec}
\usepackage{tabularx}
\usepackage{multicol}
\usepackage{enumitem, float}
\usepackage{multirow}
\usepackage{tikz}
\usepackage{tikz-cd}
\usepackage[backend=biber,style=alphabetic]{biblatex}
\usepackage{subfigure, subcaption}
\usepackage[all]{xy}
\usetikzlibrary{shapes.geometric, arrows.meta}
\tikzstyle{startstop} = [rectangle, rounded corners, minimum width=3cm, minimum height=1cm,text centered, text width=3cm, draw=black]
\tikzstyle{arrow} = [thick,->,>=stealth]
\usepackage{hyperref, comment}
\usepackage[margin=0.8in]{geometry}
\usepackage{mathtools}

\title{A K\"{unneth} theorem for the cyclic homology of $A_\infty$ algebras}
\author{Henry Azubuike}
\date{}

\newtheorem{theorem}{Theorem}
\newtheorem{lemma}[theorem]{Lemma}
\newtheorem{conjecture}[theorem]{Conjecture}
\newtheorem{corollary}[theorem]{Corollary}
\newtheorem{proposition}[theorem]{Proposition}

\newtheorem{definition}[theorem]{Definition}

\begin{document}

\newcommand{\AAA}{A}
\newcommand{\BB}{B}
\newcommand{\bee}{\mathfrak{b}}
\newcommand{\Bee}{\mathfrak{B}}
\newcommand{\rr}{\mathbb{R}}
\newcommand{\bfa}{{\bf a}}
\newcommand{\bfb}{{\bf b}}
\newcommand{\ua}{\mathbf{1}_A}
\newcommand{\ub}{\mathbf{1}_B}
\newcommand{\emm}{\mathfrak{m}}
\newcommand{\al}{\alpha}
\newcommand{\bara}{\underline{a}}
\newcommand{\barb}{\underline{b}}
\newcommand{\barc}{\underline{c}}
\newcommand{\barepsilonA}{\underline{\epsilon}^{A}}
\newcommand{\barepsilonB}{\underline{\epsilon}^{B}}

\maketitle

\begin{abstract}

The cyclic homology of a $\mathbb{Z}/2\mathbb{Z}$-graded, smooth and proper $A_\infty$ category satisfying the Hodge-to-de-Rham degeneration property carries the structure of a polarized semi-infinite Hodge structure or the so-called EP-structure. Given two $A_\infty$ algebras $A$ and $B$ with the above conditions, we construct a K\"unneth map from the tensor product of their cyclic homologies to the cyclic homology of the $A_\infty$ tensor product $A\otimes_\infty B$ and show that it respects the EP-structures. As an application, we show that if $A$ and $B$ are equipped with weak Calabi-Yau structures, then $A\otimes_\infty B$ also inherits a weak Calabi-Yau structure. Also, we show that the K\"unneth quasi-isomorphism respects good splittings of the Hodge filtration on $A$ and $B$ compatible with the weak Calabi-Yau structure. Our explicit calculations rely on the combinatorial (tree) description of the tensor product of $A_\infty$ algebras developed in \cite{amorim2016tensor}.
\end{abstract}

\tableofcontents

\section{Introduction}\label{sec:intro}
The notion of a variation of semi-infinite hodge structure was introduced by Barannikov \cite{barannikov2001quantum} as a generalization of {\it variations of Hodge structures} on the moduli spaces of non-commutative deformations of complex manifolds. Mirror symmetry predicts the existence of certain mirror pairs of Calabi–Yau manifolds $X$ and $Y$, so that the Gromov–Witten invariants of $X$, the $A$-side, can be extracted from certain Hodge-theoretic invariants of $Y$, the $B$-side. The mirror relationship between the enumerative invariants of $X$ and $Y$ can be stated as an isomorphism of their variations of semi-infinite Hodge structures (VSHS), see \parencite[Section 1]{barannikov2001semiinfinitehodgestructuresmirror}, \parencite[Section 1]{IRITANI20091016}, and \parencite[Section 1]{sheridan2020formulae}.

Kontsevich proposed the homological mirror symmetry conjecture \parencite[Page 18]{kontsevich1995homological} as a generalization of Hodge-theoretic mirror symmetry. It predicts a quasi-equivalence of $A_\infty$ categories  - the (derived) Fukaya category of the $A$-side, and the derived category of coherent sheaves on the $B$-side. One question is how to get from this very sophisticated statement to the classical one. The expectation is that the negative cyclic homology of a smooth, proper, Calabi-Yau $A_\infty$-category carries a VSHS. So taking the negative cyclic homology of the Fukaya category should recover the $A$-side VSHS and taking the negative cyclic homology of the bounded derived category should recover the $B$-side VSHS. This was made precise by Barannikov \cite{barannikov2002non} and Katzarkov-Kontsevich-Pantev \parencite[Section 2]{katzarvovKonPan}, as well as the works of Gantara-Perutz-Sheridan \cite{ganatra2015mirror}, and \cite{sheridan2020formulae}.
 
We study the tensor product of two smooth and proper, weak proper (right) Calabi-Yau $A_\infty$ algebras $A$ and $B$, and their Hochschild invariants. The underlying VSHS in this case is the so-called EP-structure $\left(HC^-_*(A), \nabla_{d/du}, \langle-,-\rangle_{\text{hres}}\right)$ where the  higher residue pairing and $u$-connection are maps 
    \begin{align*}
        \langle-,-\rangle_{\text {hres}}: H C_{*}^{-}(A) \otimes H C_{*}^{-}(A) \rightarrow k[[u]] \\
        \nabla_{\frac{d}{d u}}: H C_{*}^{-}(A) \rightarrow u^{-2} H C_{*}^{-}(A)
    \end{align*}
defined in (\ref{defnhres}) and (\ref{defnconnection}), (and similarly for $B$). We construct a K\"unneth map from the tensor product of the $A_\infty$ negative cyclic homologies to the negative cyclic homology of their $A_\infty$ tensor product $A\otimes_\infty B$ and show that it is a morphism of the EP-structures.

Our main result is stated below.

\begin{theorem}\label{mainintrothm}
    Let $A$ and $B$ be two $\mathbb{Z}/2\mathbb{Z}$-graded, finite-dimensional, smooth, proper, (weak) Calabi-Yau $A_\infty$ algebras over a field $\mathrm{k}$ of characteristic zero. There is a chain map of the negative cyclic complexes 

$$ k+uK: CC_*^-(A) \otimes_{k[[u]]} CC_*^-(B) \rightarrow CC_{*}^-(A\otimes_\infty B) $$ defined in (\ref{kunnethmap_defn}, \ref{cycKunneth_defn}), such that

(a) $k+uK$ is a quasi-isomorphism of complexes. {[= Proposition (\ref{cycqism})]}

(b) $k+uK$ is compatible with the higher residue pairings and $u$-connections, and hence, a morphism of EP-structures. { [= Theorem (\ref{mainthm})]}

\end{theorem}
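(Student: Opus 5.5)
The plan is to build the map in two layers, following the classical Künneth theorem for cyclic homology: a Hochschild-level shuffle map $k$, then a $u$-linear correction $K$ that makes $k+uK$ intertwine the full negative cyclic differentials $b+uB$. Throughout I work with the tree model of $A\otimes_\infty B$ from \cite{amorim2016tensor}, whose $A_\infty$ operations $\emm_d$ are sums over pairs of trees.

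\textbf{Step 1: the Hochschild map $k$.} I define $k: CC_*(A)\otimes CC_*(B)\to CC_*(A\otimes_\infty B)$ as an $A_\infty$ version of the Eilenberg--Zilber shuffle product. A pair of Hochschild chains $(a_0|a_1|\dots|a_n)\otimes(b_0|\dots|b_m)$ is sent to a signed sum over $(n,m)$-shuffles of the chain with first entry $a_0\otimes b_0$ and subsequent entries $a_i\otimes\ub$ or $\ua\otimes b_j$, interleaved according to the shuffle. Strict units, or homotopy units strictified beforehand, are needed for this. The first check is that $b\circ k=k\circ(b_A\otimes 1\pm 1\otimes b_B)$. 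The only terms of $\emm_d$ on $A\otimes_\infty B$ that survive on inputs with a $\ua$ or $\ub$ in every slot except possibly one should be the ``pure'' operations $\emm^A_d\otimes \ub$ and $\ua\otimes\emm^B_d$. The mixed tree contributions vanish, or cancel in pairs over shuffles, because of strict unitality; this is where the combinatorics of \cite{amorim2016tensor} enters. Since $k$ reduces to the classical shuffle map when $A$ and $B$ are dg, and it is filtered by length, a spectral-sequence comparison with the associated graded (the tensor product of bar complexes) shows $k$ is a quasi-isomorphism on Hochschild complexes.

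\textbf{Step 2: the cyclic correction $K$.} I take $K$ to be the cyclic shuffle operator, as in Loday's treatment of the Künneth theorem for cyclic homology. It is defined by inserting $B$-type cyclic permutations, $K(x\otimes y)=\sum \mathrm{sh}'(B x, B y)$-type terms with $x,y$ the two Hochschild chains. The identity to verify is
\begin{align*}
(b+uB)(k+uK)=(k+uK)\bigl((b_A+uB_A)\otimes 1+1\otimes(b_B+uB_B)\bigr).
\end{align*}
Its $u^0$ part is Step 1, and the $u^2$ part, $BK=K(B\otimes1+1\otimes B)$, holds formally. The $u^1$ part is $Bk-k(B\otimes 1+1\otimes B)=[K,b]$ with $b$ the appropriate differential on each side. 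Part (a) then follows by a $u$-adic filtration argument: $k+uK$ is complete and filtered, and its $u^0$ component is the quasi-isomorphism $k$, so it is a quasi-isomorphism of $CC^-$ complexes. This uses finite-dimensionality to control completions of the tensor product over $k[[u]]$.

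\textbf{Step 3: compatibility with EP-structures.} For the higher residue pairing I use the explicit formula (\ref{defnhres}) through the Mukai/Shklyarov pairing built from the smooth Calabi--Yau data. Under the Künneth map, the diagonal bimodule of $A\otimes_\infty B$ is quasi-isomorphic to the tensor product of the diagonals, so the pairing on $A\otimes_\infty B$ factors as the product pairing. The $u$-corrections are matched by checking that $K$ is adjoint to zero up to homotopy. For the connection, I write $\nabla_{d/du}=\frac{d}{du}+\frac{\Gamma}{2u}+\frac{i\{\emm'\}}{2u^2}$-type terms, with $\Gamma$ the grading operator and the last term the Getzler contraction by the length-derivative of $\emm$. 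Both $\Gamma$ and the Euler-type vector field are additive under the tensor product, so $k+uK$ commutes with $\nabla$ up to an explicit homotopy.

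\textbf{Main obstacle.} I expect the hardest step to be this last homotopy, together with the $u^1$ identity in Step 2 in the genuinely $A_\infty$ setting. There the Getzler--Cartan operators involve all higher $\emm_d$, including the mixed tree terms of $A\otimes_\infty B$. My first attempt would be to transfer along the quasi-isomorphism $A\otimes_\infty B\simeq A\otimes B$ of \cite{amorim2016tensor}, when $A$ and $B$ are dg models, and use invariance of EP-structures under $A_\infty$ quasi-isomorphism. That reduces the problem to the dg case, where the classical formulas apply.
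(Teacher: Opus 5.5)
\textbf{Gap.} The construction of $k$ and $K$, the $u$-expansion of the chain-map identity and the two filtration arguments for (a) all match the paper. The proposal breaks down at the $u^1$ identity and at part (b).

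Your fallback for the $u^1$ identity, transferring along a quasi-isomorphism to dg models, cannot work. Whether the explicit map $k+uK\colon CC_*^-(A)\otimes CC_*^-(B)\to CC_*^-(A\otimes_\infty B)$ commutes with $\bee+u\Bee$ is a strict identity between specific operators. Quasi-isomorphism does not preserve it. The transfer is also unnecessary: you already have the right tool from Step 1. Every chain in the image of $K$ is $1\otimes 1$ followed only by pure entries $\bara_i,\barb_j$. The commuting-subalgebra relations of Proposition \ref{prop18} therefore reduce $\bee K$ to pure $\emm^A$ and $\emm^B$ terms, plus mixed $\emm_2^\otimes$ terms that cancel in commuting pairs. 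This is exactly how the paper argues (Appendix A).

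The substantive gap is in (b). The chain-level Mukai pairing (\ref{eq:Mukaipairing}) nests two operations $\emm^\otimes$. Each receives two entries that are not of pure type: $c_A\otimes c_B$ and $a^2_0\otimes b^2_0$ in the inner one, $a^1_0\otimes b^1_0$ and the inner output in the outer one. The commuting-subalgebra relations handle at most one such entry, and with two of them genuinely mixed trees of \cite{amorim2016tensor} contribute.

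The missing ingredient is Theorem \ref{thm:gencommutingsub}. It says $\emm^\otimes_{n+2}$ with two general entries vanishes unless the pure entries are ordered as $\bara^{(1)},\barb^{(1)},u,\barb^{(2)},\bara^{(2)},v,\bara^{(3)},\barb^{(3)}$, in which case it is a single product $\pm U^A\otimes W^B$. This is what makes the trace factor, giving the Mukai compatibility of $k$. It also kills $\langle k(\cdot),K(\cdot)\rangle_{Muk}$, $\langle K(\cdot),k(\cdot)\rangle_{Muk}$ and $\langle K(\cdot),K(\cdot)\rangle_{Muk}$ on the nose. The reason is that cyclic shuffles put $\barb_0$ before $\bara_0$, which violates the required order. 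Your claim that ``the diagonal of $A\otimes_\infty B$ is the tensor of diagonals'' concerns at best an abstract Künneth isomorphism, not the explicit $k$ or the $u$-cross terms. Likewise, ``$K$ is adjoint to zero up to homotopy'' is not argued.

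For the connection, ``additivity'' is not literally true. One has $[K,\Gamma]=2K$, and this is what cancels the extra $K$ produced by $\frac{d}{du}(u^{p+1}K)$. The vanishing $[k,\bee\{\emm'\}]=0$ needs the commuting-subalgebra relations together with $\emm'_2=0$. The $u^{p-1}$ term $[k,\Bee\{\emm'\}]+[K,\bee\{\emm'\}]$ is only null-homotopic. The paper uses Shklyarov's $H$ plus a new $H'=u^{-1}(k+uK)\circ(\iota\{\emm'_A\}\otimes\Bee)$, controlled by the Cartan formula (\ref{cartanmagicform}).

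Finally, your reduction to dg models has three further problems:
\begin{itemize}
\item It needs an $A_\infty$ morphism $A\otimes_\infty B\to\tilde A\otimes\tilde B$ intertwining the two Künneth maps on $CC^-$ up to homotopy. This is not available here, since tensor products of $A_\infty$ morphisms are delicate, and it is essentially as hard as the direct computation.
\item Dg models are generally infinite-dimensional, so the trace formula for the pairing is not available there without a separate Morita-invariance argument.
\item In the dg case the classical results cover the connection (Shklyarov) but not the higher residue pairing.
\end{itemize}
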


The K\"unneth maps $k, K: C_*(\AAA) \otimes C_*(\BB) \rightarrow C_{*}(\AAA \otimes_\infty \BB)$ are constructed by extending the well-known shuffle and cyclic shuffle maps in the associative algebra case (see \parencite[Sections 4.2 and 4.3]{loday2013cyclic}). Shklyarov in \parencite[Theorem 2]{shklyarov2016hodge} carried out this extension in the $dg$ algebra case and proved that the cyclic K\"unneth quasi-isomorphism respects the $u$-connections on periodic cyclic homology. Theorem \ref{mainintrothm} generalizes Shklyarov's work to the $A_\infty$ world, and further shows that under the finiteness conditions - smooth and proper, the (cyclic) K\"unneth map also respects the Mukai and higher residue pairings on negative cyclic homology.

A major challenge to extending results concerning tensor products to $A_\infty$ algebras (or categories) is that the operation of tensor product for $A_\infty$ algebras is not canonical, and may not preserve extra structure. It is well-studied by many authors however. In particular, Saneblidze and Umble \cite{saneblidze2000diagonal, saneblidze2004diagonals}, and Markl and Shnider \cite{markl2006associahedra} formulated it using the language of operads. More recently, Amorim \cite{amorim2016tensor} generalized the construction to filtered $A_\infty$ algebras, without the language of operads. In this paper, we work with the version of Amorim's construction for classical (unfiltered) $A_\infty$ algebras, where the $\emm_n$ operations are given by trees. However, this tensor product framework does not preserve cyclic structures. For a cyclic structure-preserving tensor product, see \parencite[Theorem 1.1]{amorim2019tensor}. To overcome this hurdle, we replace the cyclic structure with a weak Calabi-Yau structure on $A$. 

We use the notion of a {\it weak proper (right)} Calabi-Yau structure in the sense of Amorim-Tu \parencite[Definition 2.13]{amorim2022morita}. Thus, for our smooth and proper $A_\infty$ algebra $A$, a weak proper (right) Calabi-Yau structure on $A$ is a non-degenerate element $\phi \in H H_{*}(A)^{\vee}$, see Definition \ref{defnWCYstr} for more details. For the related notion of a weak smooth (left) Calabi-Yau structure, see e.g. \parencite[Section 2.5]{holstein2024koszul} or \parencite[Section 3]{Brav_Dyckerhoff_2019}. In particular, throughout this paper, the term `` weak Calabi-Yau structure'' refers to this proper (right) Calabi-Yau structure.  Such a weak Calabi-Yau structure gives rise to the Poincar\'e-type isomorphism:

\begin{proposition}[= Proposition (\ref{poincare})] A weak Calabi-Yau structure $\phi$ determines an isomorphism
   $ D: HH^*(A) \longrightarrow HH_*(A)$ defined in equation (\ref{eq D}). This is an isomorphism of $HH^*(A)$-modules, that is, $$D(\theta \cup \psi) = \theta \cap D(\psi)$$ for Hochschild cochains $\theta, \psi \in HH^*(A)$
\end{proposition}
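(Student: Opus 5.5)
We plan to define $D$ by pairing against the weak Calabi-Yau structure. Write $\phi\in HH_*(A)^\vee$ and use the cap product $\cap: HH^*(A)\otimes HH_*(A)\to HH_*(A)$, which makes $HH_*(A)$ a module over $(HH^*(A),\cup)$. First we construct the map
\[
\Phi: HH^*(A)\longrightarrow HH_*(A)^\vee,\qquad \Phi(\psi)(x)=\phi(\psi\cap x).
\]
Because $A$ is smooth and proper, $HH_*(A)$ is finite dimensional. The nondegeneracy of $\phi$ (Definition \ref{defnWCYstr}) will be used in its Hochschild form: the induced pairing $HH_*(A)\otimes HH_*(A)\to k$, equivalently the map $A\to A^\vee$ of bimodules determined by $\phi$, is an isomorphism. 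From this we get an isomorphism $HH_*(A)\cong HH_*(A)^\vee$, and $D$ is $\Phi$ followed by its inverse. Equivalently, and this is the version I would use to match equation (\ref{eq D}), we set $D(\psi)=\psi\cap \omega$. Here $\omega\in HH_*(A)$ is the class corresponding to $\phi$ under the identification $HH_*(A)^\vee\cong HH_*(A)$ (a Mukai/duality isomorphism that holds for smooth proper algebras).

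The module property is then formal once the cap product is shown to be a module action. We have
\[
D(\theta\cup\psi)=(\theta\cup\psi)\cap\omega=\theta\cap(\psi\cap\omega)=\theta\cap D(\psi).
\]
So the first step is to prove, on homology, that $(\theta\cup\psi)\cap x=\theta\cap(\psi\cap x)$ for $A_\infty$ Hochschild (co)chains. At chain level this holds only up to homotopy. I would write down the standard brace-type formulas for $\cup$ and $\cap$ in terms of the $\mathfrak{m}_n$, and exhibit an explicit homotopy $H(\theta,\psi,x)$, obtained by inserting $\theta$ and $\psi$ together with one $\mathfrak{m}$ operation. The goal is the identity
\[
(\theta\cup\psi)\cap x-\theta\cap(\psi\cap x)=b\,H+H(\delta\theta,\ldots)\pm\cdots .
\]

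The second step is bijectivity. Through the bimodule quasi-isomorphism $A\simeq A^\vee$ induced by $\phi$, $D$ is identified with the composite
\[
HH^*(A)=\mathrm{Ext}_{A\text{-bimod}}(A,A)\cong \mathrm{Ext}(A,A^\vee)\cong HH_*(A)^\vee\cong HH_*(A),
\]
which is a chain of isomorphisms. Smoothness and properness are used to identify $\mathrm{Ext}(A,A^\vee)$ with $HH_*(A)^\vee$ and to guarantee finite dimensionality. If nondegeneracy is instead stated only as nondegeneracy of the pairing on $H^*(A)$, I would first upgrade it to the bimodule statement, using the properness of $A$.

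The main obstacle is the sign and homotopy bookkeeping in the chain-level module identity for the cap product over an $A_\infty$ algebra. I would handle it in the $\mathbb{Z}/2$-graded bar conventions already fixed in the paper. If that becomes unwieldy, I would fall back on the $\mathrm{Ext}$ description: there, the module property follows from the fact that cup product is Yoneda composition and cap product is the induced action on $\mathrm{Hom}(A,A^\vee)$, which is associative by construction.
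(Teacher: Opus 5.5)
\paragraph{Verdict: there is a genuine gap.} The step that carries all the content is asserted, not proved. You write ``Equivalently, \ldots\ we set $D(\psi)=\psi\cap\omega$.'' But $D$ in equation (\ref{eq D}) is defined by $\langle D(\psi),\bfa\rangle_{Muk}=(\iota_\psi\rho)(\bfa)$, with $\rho$ a representative of $\Psi(\phi)$. Nothing in your argument shows this equals $\langle\psi\cap\omega,\bfa\rangle_{Muk}$, which by Lemma \ref{appA.1} is $\pm\phi(\psi\cap\bfa)$.

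Unwinding $\Psi$ for $\rho=\Psi(\phi)$, $\iota_\psi\rho(\bfa)$ is a sum of terms $\phi\bigl(\emm(\ldots,a_0,a_1,\ldots,a_{i-1},\psi(a_i,\ldots,a_j),\ldots)\,|\,\ldots\bigr)$, with $\psi$ inserted \emph{after} $a_0$. In $\bee^{1,1}(\psi,\bfa)$ it is inserted \emph{before} $a_0$, so identifying the two on homology already requires a homotopy.

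In effect you use two descriptions of $D$ and never connect them: cap with $\omega$ for the module property, and the composite $HH^*(A)\to\mathrm{Ext}(A,A^\vee)\cong HH_*(A)^\vee\cong HH_*(A)$ for bijectivity. Since $\cap$ is already a module action (the paper takes this as known, so your first step is not where the difficulty lies), the proposition is equivalent to $D(\theta)=\theta\cap D(1)$ for all $\theta$. In the paper, $D(\theta)=\theta\cap\omega$ is an output of the proposition together with Lemma \ref{lemma40}, not an input.

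Your fallback has the same hole. It is not ``by construction'' that precomposition on $\mathrm{Hom}(A,A^\vee)$ corresponds, under the explicit map $\Psi$ of Lemma \ref{lem:weakCY}, to the dual of the cap action. That compatibility is exactly what has to be checked.

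\paragraph{How the paper closes it.} The paper builds this bridge directly. Using nondegeneracy of the Mukai pairing and Lemma \ref{appA.1}, it reduces the claim to $\iota_{\psi\cup\phi}\rho(\bfa)=\pm\,\iota_\psi\rho(\phi\cap\bfa)$ on homology. It proves this with the two-insertion contraction $\iota_{\psi,\phi}(\theta)(\bfa)=\sum\pm\,\theta\bigl(a^{(1)},\underline{\psi(a^{(2)})},a^{(3)},\phi(a^{(4)}),a^{(5)}\bigr)(a_0)$, which satisfies
\[
\iota_{\psi,\phi}(\partial\rho)(\bfa)=\iota_{\psi\cup\phi}\rho(\bfa)\pm\iota_\psi\rho(\phi\cap\bfa)+\iota_{\psi,\phi}\rho(\bee\bfa).
\]
Closedness of $\rho$, $\bfa$, $\psi$ and $\phi$ then gives the result. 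What you need is this homotopy, or an equivalent one relating $\iota_-\rho$ to the cap action, not the associativity homotopy for $\cap$.

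\paragraph{A smaller point.} You conflate two separate nondegeneracy inputs, which are not equivalent. The Mukai pairing is nondegenerate for any smooth proper $A$, and this gives $HH_*(A)\cong HH_*(A)^\vee$. Nondegeneracy of $\phi$ means $\Psi(\phi)$ is a bimodule quasi-isomorphism, and that is what makes $\iota_-\rho$ bijective.
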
 

The non-degeneracy of the Mukai pairing implies that the map $\langle-,-\rangle_{Muk}: HH_*(A)\to HH_*(A)^\vee$ defined by $\alpha\mapsto \langle \alpha,-\rangle_{Muk}$ is an isomorphism. Thus, the weak Calabi-Yau structure $\phi$ may be represented by a unique Hochschild homology class $\omega\in HH_*(A)$ such that we can write $\phi=\langle \omega,-\rangle_{Muk}$.

\begin{proposition}[= Lemma \ref{lemma40} and Corollary \ref{cor40}]\label{prop:intro}
    $\omega=D(1)$. Hence $\omega$ generates $HH_*(A)$ as a $HH^*(A)$-module.
\end{proposition}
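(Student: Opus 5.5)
To prove $\omega = D(1)$, I would show that $D(1)$ satisfies the property that determines $\omega$ uniquely. Since the Mukai pairing is non-degenerate, $\omega$ is the unique class in $HH_*(A)$ with $\langle \omega, \beta\rangle_{Muk} = \phi(\beta)$ for all $\beta \in HH_*(A)$. It therefore suffices to check
$$\langle D(1), \beta\rangle_{Muk} = \phi(\beta)\qquad \text{for all } \beta\in HH_*(A).$$

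The first step is to unwind the definition of $D$ from (\ref{eq D}). I expect it to be built from $\phi$ and the Mukai pairing in one of two ways. Either $D(\psi)$ is characterized by $\langle D(\psi), \beta\rangle_{Muk} = \phi(\psi\cap\beta)$, up to a Koszul sign. Or $D$ is the composite of the map $HH^*(A)\to HH_*(A)^\vee$, $\psi\mapsto \phi(\psi\cap -)$, with the inverse of the Mukai isomorphism $HH_*(A)\to HH_*(A)^\vee$.

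In either form, evaluating at the unit $1\in HH^*(A)$ gives the claim at once. The cap product with $1$ is the identity on Hochschild homology, so $\phi(1\cap\beta) = \phi(\beta)$, and hence $\langle D(1),\beta\rangle_{Muk}=\phi(\beta)$.

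The step where care is needed is $1\cap \beta = \beta$ at the level of homology. For a strictly unital $A_\infty$ algebra, the cap product by the unit cochain $\mathbf{1}_A$ (concentrated in length zero) acts as the identity on the chain level, possibly after the usual sign conventions. In the cohomologically unital case, I would pass to a strictly unital minimal model, using that $HH_*$, $HH^*$, cap product and Mukai pairing are invariant under quasi-isomorphism. I would also check that the sign conventions in (\ref{eq D}) introduce no sign at $\psi = 1$, since $1$ has even degree.

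For the corollary, I would use that $D$ is an isomorphism of $HH^*(A)$-modules by Proposition (\ref{poincare}). For any $\alpha\in HH_*(A)$, set $\theta = D^{-1}(\alpha)$. Then
$$\alpha = D(\theta) = D(\theta\cup 1) = \theta\cap D(1) = \theta\cap\omega.$$
So $\omega$ generates $HH_*(A)$ as an $HH^*(A)$-module, and in fact freely, of rank one.

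The main obstacle is only bookkeeping: matching the definition of $D$ with the Mukai-pairing characterization of $\omega$, including signs and unitality conventions. The conceptual content is just the unit axiom for the cap product.
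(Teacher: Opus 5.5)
Your skeleton matches the paper's. You characterize $\omega$ by $\langle\omega,-\rangle_{Muk}=\phi$ and non-degeneracy, reduce the claim to $\langle D(1),\beta\rangle_{Muk}=\phi(\beta)$, and get the corollary from $\alpha=D(D^{-1}(\alpha)\cup 1)=D^{-1}(\alpha)\cap D(1)$, exactly as in Corollary \ref{cor40}. Your remark that the module is even free of rank one is also right, since $D$ is a module isomorphism.

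The problem is the step where you unwind $D$: neither of your guesses is the paper's definition. In (\ref{eq D}) the first arrow is the contraction $\psi\mapsto\iota_\psi\rho$, where $\rho$ is a chain-level representative of the bimodule map $\Psi(\phi)\in\operatorname{hom}_{[A,A]}(A_\Delta,A_\Delta^\vee)$ from Lemma \ref{lem:weakCY}. The cap product does not enter the definition. So by construction $\langle D(1),\beta\rangle_{Muk}=\iota_{\mathbf 1}\rho(\beta)$, and the identity $\mathbf 1\cap\beta=\beta$ does not by itself compute this quantity. Replacing $\iota_\psi\Psi(\phi)$ by $\pm\phi(\psi\cap -)$ for general $\psi$ would need its own argument on homology. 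In $\iota_\psi\Psi(\phi)$ the output of $\psi$ sits to the right of $a_0$ inside $\emm$, whereas in $\psi\cap-$ it sits to the left.

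The missing step is closed by the same unit idea you had, applied inside $\Psi$ rather than inside the cap product; this is precisely the paper's proof. Insert $\underline{\mathbf 1}$ into $\Psi(\phi)(x,\underline v,y)(w)=\sum\pm\phi(\emm(y^{(3)},w,x,v,y^{(1)}),y^{(2)})$. This gives $\iota_{\mathbf 1}\rho(a_0|a)=\sum\pm\phi\bigl(\emm(a^{(4)},a_0,a^{(1)},\mathbf 1,a^{(2)}),a^{(3)}\bigr)$. Strict unitality kills every term except $\emm_2(a_0,\mathbf 1)=\pm a_0$, leaving $\pm\phi(\bfa)=\pm\langle\omega,\bfa\rangle_{Muk}$.

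Your intention to check the sign is well placed. The paper's computation ends with a factor $(-1)^{|\rho|+|a_0|(|a_1|'+\cdots+|a_n|')}$, which depends on $\bfa$ and is simply dropped when it concludes $D(1)=\omega$.
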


Let $\phi_A=\langle \omega_A,-\rangle_{Muk}$, and $\phi_B=\langle \omega_B,-\rangle_{Muk}$ be weak Calabi-Yau structures on $A$ and $B$ respectively. We obtain a weak Calabi-Yau structure on $A\otimes_\infty B$. 

\begin{theorem}[= Theorem \ref{thm42}]
    $\phi_{A\otimes_\infty B}:=\langle\omega^\otimes, \ -\rangle_{Muk}$ where $\omega^\otimes := k(\omega_A \otimes \omega_B)$ is a weak Calabi-Yau structure on $A\otimes_\infty B$.
    
\end{theorem}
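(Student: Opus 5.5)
We must show that $\phi_{A\otimes_\infty B}=\langle k(\omega_A\otimes\omega_B),-\rangle_{Muk}$ is non-degenerate in the sense of Definition \ref{defnWCYstr}. Concretely, the map
$$D^{\otimes}: HH^*(A\otimes_\infty B)\to HH_*(A\otimes_\infty B),\qquad \theta\mapsto \theta\cap \omega^\otimes$$
should be an isomorphism; equivalently, the pairing $(\theta,\psi)\mapsto \phi_{A\otimes_\infty B}(\theta\cup\psi)$ should be perfect. The plan is to reduce everything to the factors $A$ and $B$ through K\"unneth isomorphisms that intertwine the relevant structures. The factors are handled by Proposition \ref{poincare} and Proposition \ref{prop:intro}, which say that $\theta\mapsto\theta\cap\omega_A$ is an isomorphism $HH^*(A)\to HH_*(A)$, and likewise for $B$.

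\textbf{Step 1: Hochschild homology.} The K\"unneth map $k$ induces an isomorphism $HH_*(A)\otimes HH_*(B)\to HH_*(A\otimes_\infty B)$. This is the Hochschild part of Proposition \ref{cycqism}: set $u=0$, or use the length filtration together with the Eilenberg--Zilber argument over a field.

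\textbf{Step 2: Mukai pairing.} I will show that $k$ is compatible with Mukai pairings:
$$\langle k(a\otimes b),k(a'\otimes b')\rangle_{Muk}=\pm\langle a,a'\rangle_{Muk}\langle b,b'\rangle_{Muk}.$$
This should come from the $u=0$ specialization of the higher residue compatibility in Theorem \ref{mainthm}. Since $\langle\omega_A,-\rangle_{Muk}$ and $\langle\omega_B,-\rangle_{Muk}$ are perfect on $HH_*(A)$ and $HH_*(B)$ by the smooth--proper hypothesis, it follows that $\langle\omega^\otimes,-\rangle_{Muk}$ corresponds under $k$ to $\phi_A\otimes\phi_B$. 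As a functional on $HH_*(A)\otimes HH_*(B)$, this is non-degenerate (finite dimensionality gives $(V\otimes W)^\vee=V^\vee\otimes W^\vee$). This settles non-degeneracy at the level of the Mukai-represented functional, provided the definition only asks that $\omega^\otimes$ be a Mukai representative of a nondegenerate element.

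\textbf{Step 3: cap product.} If Definition \ref{defnWCYstr} instead asks for non-degeneracy via the cap/Poincar\'e map, I need a cohomological K\"unneth map $k^*:HH^*(A)\otimes HH^*(B)\to HH^*(A\otimes_\infty B)$. This is an isomorphism by smoothness and properness, for instance via $HH^*\cong \mathrm{Hom}_{A^e}(A,A)$ and the identification $(A\otimes_\infty B)^e\simeq A^e\otimes B^e$. I then need the cap product to be compatible with the two K\"unneth maps:
$$k^*(\theta\otimes\eta)\cap k(\alpha\otimes\beta)=\pm k\big((\theta\cap\alpha)\otimes(\eta\cap\beta)\big).$$
Granting this, $D^\otimes\circ k^*=k\circ(D_A\otimes D_B)$, which is a composite of isomorphisms, so $D^\otimes$ is an isomorphism. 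Non-degeneracy of $\phi_{A\otimes_\infty B}$ then follows by the same argument as in Proposition \ref{poincare}.

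\textbf{Main obstacle.} The hard part is the cap-product compatibility at chain level. In $A\otimes_\infty B$ the operations $\emm_n$ are given by Amorim's trees, so the shuffle map intertwines the cap products only up to homotopy. The first approach would be to avoid cochain formulas altogether: compute $\theta\cap-$ by the action of $\mathrm{Hom}_{A^e}(A,A)$ on $A\otimes^{\mathbb L}_{A^e}A$, which is functorial and monoidal under the bimodule equivalence. Failing that, I would check the identity on the length-filtration associated graded, where the tree operations reduce to the strict tensor product, and then conclude by a spectral sequence argument.
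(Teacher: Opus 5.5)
There is a genuine gap: you never connect your Step~2 identity to the definition of non-degeneracy that the theorem actually uses, and Step~3 does not close the gap.

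\textbf{What the definition requires.} Definition \ref{defnWCYstr} asks that $\Psi(\phi)\colon A_\Delta\to A_\Delta^\vee$ be a bimodule quasi-isomorphism. After passing to minimal models, $\emm_1^\otimes=\emm_1^A\otimes\mathrm{id}+\mathrm{id}\otimes\emm_1^B=0$. The condition then says exactly that $\Psi(\phi)_{0,0}$ is an isomorphism, i.e.\ the pairing $(u,v)\mapsto\pm\phi(\emm_2(v,u))$ on $A\otimes B$ is nondegenerate.

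\textbf{Why your two readings do not match it.}
\begin{enumerate}
\item It is not the statement that $\theta\mapsto\theta\cap\omega^\otimes$ is bijective.
\item It is not a property of the functional $\phi_A\otimes\phi_B$ on $HH_*(A)\otimes HH_*(B)$ alone. A single linear functional has no meaningful notion of non-degeneracy; the perfect object is the Mukai pairing, not $\langle\omega_A,-\rangle_{Muk}$. So Step~2 proves only the identity $\phi_{A\otimes_\infty B}\circ k=\pm\,\phi_A\otimes\phi_B$.
\item Step~3 rests on a cohomological K\"unneth map and a chain-level cap-product compatibility, both of which you leave unproved.
\item Even granting that $D^\otimes$ is bijective, your final inference runs the wrong way. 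Proposition \ref{poincare} deduces bijectivity of $D$ from non-degeneracy of $\Psi(\phi)$, not conversely, and no converse is available in the paper.
\end{enumerate}

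\textbf{The missing step.} It is short and makes Step~3 unnecessary: evaluate $\Psi$ in arity $(0,0)$. You have
\[
\Psi(\phi_{A\otimes_\infty B})_{0,0}(u_A\otimes u_B)(v_A\otimes v_B)=\pm\,\phi_{A\otimes_\infty B}\bigl(\emm_2^\otimes(v_A\otimes v_B,u_A\otimes u_B)\bigr).
\]
Here $\emm_2^\otimes(v_A\otimes v_B,u_A\otimes u_B)=\pm\,\emm_2^A(v_A,u_A)\otimes\emm_2^B(v_B,u_B)$. This is a length-zero Hochschild chain, hence a cycle since $\emm_1=0$, and $k$ acts on it as the identity. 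Your Step~2 identity (Theorem \ref{thm: mukai}) then gives the value $\pm\,\phi_A(\emm_2^A(v_A,u_A))\,\phi_B(\emm_2^B(v_B,u_B))$. Therefore, under $(A\otimes B)^\vee\cong A^\vee\otimes B^\vee$,
\[
\Psi(\phi_{A\otimes_\infty B})_{0,0}=\pm\,\Psi(\phi_A)_{0,0}\otimes\Psi(\phi_B)_{0,0}.
\]
Up to Koszul signs this is a tensor product of isomorphisms, hence an isomorphism. This is precisely the paper's argument.

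\textbf{A citation point.} The Mukai compatibility you need is Theorem \ref{thm: mukai}, which the paper proves directly. The higher-residue compatibility in Theorem \ref{mainthm} is deduced from it, not the other way round, so you should cite Theorem \ref{thm: mukai} rather than a $u=0$ specialization of Theorem \ref{mainthm}.
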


Extending Proposition \ref{prop:intro} to the tensor product of $A$ and $B$, we obtain the following.

\begin{proposition} [= Corollary \ref{cor43}]
    $\omega^\otimes$ generates $HH_*(A\otimes_\infty B)$ as $HH^*(A\otimes_\infty B)$-module.
\end{proposition}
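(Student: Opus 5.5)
The plan is to apply Proposition \ref{prop:intro} directly to the $A_\infty$ algebra $A\otimes_\infty B$, using the weak Calabi-Yau structure $\phi_{A\otimes_\infty B}$ supplied by Theorem \ref{thm42}. That proposition (Lemma \ref{lemma40} and Corollary \ref{cor40}) is proved for an arbitrary smooth, proper $A_\infty$ algebra with a weak Calabi-Yau structure. It asserts that the representing class $\omega$ with $\phi=\langle\omega,-\rangle_{Muk}$ equals $D(1)$, and hence generates $HH_*$ over $HH^*$.

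So the first step is to check that $A\otimes_\infty B$ satisfies the standing hypotheses.
\begin{itemize}
\item It is $\mathbb{Z}/2\mathbb{Z}$-graded and finite-dimensional, since its underlying space is $A\otimes B$.
\item It is proper, because its cohomology is $H(A)\otimes H(B)$ by the tree formula for $\emm_1$.
\item It is smooth, because the tensor product of smooth algebras is smooth: the diagonal bimodule of $A\otimes_\infty B$ is the external tensor product of the diagonal bimodules, and is therefore perfect.
\end{itemize}
The second step is to identify the class that represents $\phi_{A\otimes_\infty B}$ under the Mukai isomorphism. By the definition in Theorem \ref{thm42} this class is exactly $\omega^\otimes=k(\omega_A\otimes\omega_B)$. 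Non-degeneracy of the Mukai pairing on $HH_*(A\otimes_\infty B)$, which follows from smoothness and properness, makes this representative unique. The third step applies Lemma \ref{lemma40} to obtain $\omega^\otimes = D_{A\otimes_\infty B}(1)$. Corollary \ref{cor40} then gives generation: for any $\alpha\in HH_*(A\otimes_\infty B)$ we write $\alpha=D(\theta)=D(\theta\cup 1)=\theta\cap D(1)=\theta\cap\omega^\otimes$. This uses that $D$ is an isomorphism of $HH^*$-modules (Proposition \ref{poincare}).

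The main obstacle is whether Lemma \ref{lemma40} and Proposition \ref{poincare} were genuinely proved for arbitrary smooth proper $A_\infty$ algebras, or whether they used something specific to $A$. If they were, the corollary is formal. If some point of their proofs used a special feature, such as strict units, the fix depends on the feature. For strict units, the tree description of Amorim's product shows that $\ua\otimes\ub$ is a strict unit of $A\otimes_\infty B$, so the hypothesis still holds.

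As a cross-check I would also prove generation independently via the Künneth maps. Take $\alpha=k(a\otimes b)$ with $a=\theta_A\cap\omega_A$ and $b=\theta_B\cap\omega_B$. Then $\alpha$ should equal $(\theta_A\otimes\theta_B)\cap\omega^\otimes$, where $\theta_A\otimes\theta_B$ is the cohomological Künneth product in $HH^*(A\otimes_\infty B)$. This requires the shuffle-type map $k$ to intertwine cap products on homology. Surjectivity of $k$ on $HH_*$, from part (a)'s Hochschild analogue, then finishes the argument. I would only fall back on this route if the formal argument breaks.
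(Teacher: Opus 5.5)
Your proposal is correct and follows the paper's proof, which is exactly the combination of Theorem \ref{thm42} (so $\langle\omega^\otimes,-\rangle_{Muk}$ is a weak Calabi--Yau structure on $A\otimes_\infty B$) with Lemma \ref{lemma40} and Corollary \ref{cor40} applied to $A\otimes_\infty B$. Your hypothesis checks go beyond what the paper writes and are fine. Minimality is also inherited, since $\emm_1^\otimes=\emm_1^A\otimes\mathrm{id}+\mathrm{id}\otimes\emm_1^B=0$, and the paper gets non-degeneracy of the Mukai pairing on $A\otimes_\infty B$ from Theorem \ref{thm: mukai} and the K\"unneth quasi-isomorphism rather than from smoothness; the cap-product cross-check is not needed.
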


The negative cyclic homology of $A$, $HC_*^-(A)$ carries the noncommutative Hodge filtration $F^pHC_*^-(A):=u^pHC_*^-(A)$. A {\it splitting of the Hodge filtration} is a choice of a $\mathrm{k}$-linear morphism $s:HH_*(A)[[u]]\xrightarrow{} HC_*^-(A)$ whose reduction modulo $u$ is the canonical identification $HH_*(A)\cong HC_*^-(A)/uHC_*^-(A)$. Following Amorim-Tu \parencite[Definition 3.7]{amorim2022categorical}, we call such a splitting {\it good} and {\it $\omega$-compatible} when, in addition, it satisfies the required compatibility with the $u$-connection, and the higher residue pairing induced by the weak Calabi-Yau structure. The complete definition is given in Definition \ref{goodsplittings}. 

We apply the results of Theorem \ref{mainintrothm} to obtain good splittings of the Hodge filtration on $A\otimes_\infty B$ compatible with the Calabi-Yau structure $\omega^{\otimes}$.

  \begin{theorem} [= Theorem \ref{thmgoodsplitting}]
    Let $s_A$ and $s_B$ be splittings of the Hodge filtration on $A$ and $B$ respectively. Then $s_{A\otimes_\infty B}: = (k+uK)\circ(s_A \otimes s_B)\circ k^{-1}$ is a splitting of the Hodge filtration on $A\otimes_\infty B$. Further, if $s_A, s_B$ are $\omega_A, \omega_B$-compatible good splittings of the Hodge filtrations on $A,$ and $B$ respectively, then $s_{A\otimes_\infty B}$ is a $\omega^{\otimes }$-compatible good splitting of the Hodge filtration of $A\otimes_\infty B$, where $\omega^\otimes=k(\omega_A\otimes\omega_B)$.
\end{theorem}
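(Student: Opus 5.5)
The plan is to verify the defining properties one at a time, transporting each through the Künneth quasi-isomorphism of Theorem \ref{mainintrothm}. Write $s_{A\otimes_\infty B}=(k+uK)\circ(s_A\otimes s_B)\circ k^{-1}$. Here $k^{-1}:HH_*(A\otimes_\infty B)\to HH_*(A)\otimes HH_*(B)$ is the inverse of the Hochschild Künneth isomorphism, extended $u$-linearly. Likewise $s_A\otimes s_B$ is extended to a map
$$(HH_*(A)\otimes HH_*(B))[[u]]\to HC^-_*(A)\otimes_{k[[u]]}HC^-_*(B).$$

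\textbf{Step 1: it is a splitting.} We must show that $s_{A\otimes_\infty B}$ reduces modulo $u$ to the canonical identification. Modulo $u$, the map $k+uK$ reduces to $k$ on Hochschild complexes. Each of $s_A$ and $s_B$ reduces to the identity of $HH_*$, by the canonical identifications $HC^-_*/u\cong HH_*$. Hence the reduction of $s_{A\otimes_\infty B}$ is $k\circ\mathrm{id}\circ k^{-1}=\mathrm{id}$. We also need that $(k+uK)\bmod u$ is compatible with the identification $HC^-_*(A\otimes_\infty B)/u\cong HH_*(A\otimes_\infty B)$ and with the tensor identification on the source. This is a diagram chase, using that $k+uK$ is a quasi-isomorphism of $k[[u]]$-complexes (Proposition \ref{cycqism}) whose reduction is $k$.

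\textbf{Step 2: the good and $\omega$-compatible conditions.} By Definition \ref{goodsplittings}, these conditions presumably say three things:
\begin{enumerate}
\item the image of $s$ is preserved by $u^2\nabla_{d/du}$ up to a prescribed form, namely $\nabla_{u\partial_u}$ maps $s(HH_*)$ into $u^{-1}s(HH_*)\oplus s(HH_*)$;
\item the higher residue pairing restricted to $s(HH_*)$ takes values in $k$, not $k[[u]]$ (it is homogeneous);
\item the class $s(\omega)$ (or $\omega$ itself) lies in the prescribed eigenspace for the grading operator.
\end{enumerate}
For the first two conditions I use Theorem \ref{mainthm}: $k+uK$ intertwines $\nabla_{d/du}$ and $\langle-,-\rangle_{\text{hres}}$. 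Here the EP-structure on $HC^-_*(A)\otimes_{k[[u]]}HC^-_*(B)$ is the tensor-product structure: the connection acts by the Leibniz rule, and the pairing is the product of pairings. Given that, the image of $s_{A\otimes_\infty B}$ is the image under $k+uK$ of $s_A(HH_*(A))\otimes s_B(HH_*(B))$. The Leibniz rule shows that the connection condition is preserved under tensor products, and the multiplicativity of the pairing shows that the values of the pairing on $s_A(HH_*(A))\otimes s_B(HH_*(B))$ lie in $k\otimes k=k$. For the third condition, note that $\omega^\otimes=k(\omega_A\otimes\omega_B)$, so
$$s_{A\otimes_\infty B}(\omega^\otimes)=(k+uK)(s_A(\omega_A)\otimes s_B(\omega_B)).$$
Since the grading operator on the tensor product is the sum of the two gradings and $\nabla$ is compatible, the eigenvalue condition for $\omega^\otimes$ follows, with the sum of the two weights. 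This is consistent with the dimension of $A\otimes_\infty B$ being the sum of the dimensions.

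\textbf{Main obstacle.} I expect the hardest step to be showing that the tensor-product EP-structure really is the one transported by $k+uK$. Concretely, Theorem \ref{mainthm} must give $\nabla_{d/du}\circ(k+uK)=(k+uK)\circ(\nabla^A\otimes 1+1\otimes\nabla^B)$ on the nose on homology, including the constant (dimension-shift) terms in the $u$-connection. These constant terms typically produce an extra $\frac{\dim}{2u}$ correction, which must add correctly. I would first check this on the $u^{-2}$ and $u^{-1}$ parts separately, using the explicit formula (\ref{defnconnection}). In the same spirit, I would check that the Mukai pairing (the $u^0$ part of $\langle-,-\rangle_{\text{hres}}$) is multiplicative under $k$ with the correct Koszul signs. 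This is what makes the $\omega^\otimes$-compatibility statement follow from Theorem \ref{thm42}.
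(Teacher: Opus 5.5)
This is essentially the paper's proof. The paper gets S1 from $\pi^\otimes\circ(k+uK)=k\circ(\pi_A\otimes\pi_B)$. It gets S2 from the multiplicativity (\ref{eq:hresk+uK}) of $\langle-,-\rangle_{\text{hres}}$ under $k+uK$, combined with Theorem \ref{thm: mukai}. It gets S3 and S4 by pushing $\nabla_{u\frac{d}{du}}s_A(\alpha)=u^{-1}s_A(\xi_A(\alpha))+s_A(\mu_A(\alpha))$ and its $B$-analogue through Theorem \ref{mainthm} via the Leibniz rule, which gives $r_\otimes=r_A+r_B$.

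Two small corrections to your guessed definitions. First, S2 in Definition \ref{goodsplittings} requires equality with the Mukai pairing, not just values in $\mathrm{k}$. Your version still gives this, because by S1 the constant term of $\langle s(\alpha),s(\beta)\rangle_{\text{hres}}$ is $\langle\alpha,\beta\rangle_{Muk}$. Second, the connection (\ref{defnconnection}) has no separate dimension-shift term. The only constant-term bookkeeping is $[k,\Gamma]=0$ and $[K,\Gamma]=2K$, and both are already absorbed in Theorem \ref{mainthm}.
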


The existence of $\omega$-compatible good splittings is very non-trivial. Amorim-Tu \parencite[Corollary 3.8]{amorim2022categorical} showed that they exist in the case where the Hochschild cohomology ring $HH^*(\AAA)$ is semi-simple. They further showed that in the case of a versal deformation of $A$, the set of $\omega$-compatible good splittings of the Hodge filtration on the central fiber is in bijection with the set of primitive forms of the corresponding variational semi-infinite Hodge structure, \parencite[Theorem 4.2]{amorim2022categorical}. With the data of a primitive form on a VSHS, one may construct a formal Frobenius manifold structure on the versal deformation of the $A_\infty$-category. This is the categorical construction of Frobenius manifolds from $A_\infty$ categories, analogous to Saito-Takahashi \parencite[Section 7]{saito2008primitive}'s theory of primitive forms in singularity theory.\\

\subsection{Notation and conventions} We shall work with $\mathbb{Z}/2\mathbb{Z}$-graded strictly unital $A_\infty$ algebras $A$ over a field $\mathrm{k}$ of characteristic zero. We let $|a|$ denote the degree of $a \in A$, and $|a|' = |a| - 1$ its shifted degree in $A[1]$. All our degrees will be shifted unless explicitly specified. A $\mathbb{Z}/2\mathbb{Z}$-graded $A_\infty$ algebra is a $\mathbb{Z}/2\mathbb{Z}$-graded vector space \[ A = \bigoplus_{p\in \mathbb{Z}/2\mathbb{Z}} A^p\] with graded homogeneous $\mathrm{k}$-linear maps 
\[
\emm_n: A[1]^{\otimes n} \longrightarrow A[1], \  \ \ n\geq 1\] of degree $1$ (mod $2$) satisfying the following relations: 
\[
\sum (-1)^{|a_1|'+...+|a_{i-1}|'}\emm_{n-j+1} \left(a_1,...,a_{i-1}, \emm_j (a_i,...,a_{i+j-1}),a_{i+j},...,a_n\right) = 0.
\]

The strict unitality condition means we have $\emm_n\left(\ldots, \ua, \ldots\right)=0$ for all $n \neq 2$, and $\emm_2\left(\ua, a\right)=a=(-1)^{|a|} \emm_2\left(a, \ua\right)$.

We use bold letters to denote elements of the Hochschild chains $\bfa: = a_0 \otimes a_1\otimes a_2\otimes \ldots \otimes a_n$ or $\bfa= a_0|a_1,a_2,...,a_n = a_0|a$, where $|\bfa|'=|a_0|+\sum_{i=1}^n\ |a_i|'$. The commas, as well as $|$ and $\otimes$ will be used interchangeably to mean the same thing. Also, denote $\bara_i = a_i \otimes \ub$, and $\barb_i = \ua \otimes b_i$ in $A\otimes_\infty B$, where $|\bara_i|'=|a_i\otimes 1|'=|a_i|'$. 

Furthermore, for $a=a_1\otimes \cdots \otimes a_n \in A[1]^{\otimes n}$, we use Sweedler notation for the coproduct on the tensor coalgebra, namely: 
\begin{equation}
    \sum_{(a)} a^{(1)} \otimes a^{(2)} = \sum_{i=1}^n (a_1 \otimes \ldots \otimes a_i)\otimes (a_{i+1} \otimes \ldots \otimes a_n)
\end{equation}
and in this case, we denote, say, $|a^{(1)}|'=|a_1|'+\cdots+|a_i|'$.

\subsection*{Acknowledgement} I would like to thank my Ph.D Advisor Lino Amorim for helpful comments and suggestions throughout this work. We also thank the referees for their careful reading of the manuscript and for their valuable comments and suggestions.

\section{Hochschild invariants of A-infinity algebras}
\label{makereference1}
We review the basic definitions that we will need, following conventions from \parencite[Section 2]{amorim2022categorical}. 

Let $A$ be an $A_\infty$-algebra.

\begin{definition}
The space of reduced Hochschild cochains of length $n$ is defined as:
$$
C^*(\AAA)^n:= \operatorname{Hom}_\mathrm{k}\left(\left(\frac{A}{k \cdot \ua}[1]\right)^{\otimes n}, A[1]\right)[-1]
$$

\end{definition}
The reduced Hochschild cochain complex is defined as $C^{*}(A)=\prod_{n \geq 0} C^{*}(A)^n$. 

Let $\tilde{C}^*(A)= \prod_{n\geq 0} \operatorname{Hom^*(A[1]^{\otimes n}, A[1])}$ denote the unreduced cochain complex of $A$. On the unreduced cochain complex one defines the Gerstenhaber product
$$
\varphi \circ \psi(a)=\sum(-1)^{|a^{(1)}|'|\psi|^{\prime}} \varphi\left(a^{(1)}, \psi\left(a^{(2)}\right), a^{(3)}\right)
$$
The corresponding bracket $[\varphi, \psi]:=\varphi \circ \psi-(-1)^{|\varphi|^{\prime}|\psi|^{\prime}} \psi \circ \varphi$ is a graded Lie bracket, and can be used to define the Hochschild differential on $ \tilde{C}^{*}(A)$ as $\delta(-):=[\emm,-]$, where $\emm=\prod_k \emm_k$ are the $A_{\infty}$-operations. 

Now the relation $\emm_2(1_A, a)= a\neq 0$ implies $\emm_2\notin C^*(A)$, and thus $\emm\notin C^*(A)$. However, the strict unitality of $A$ guarantees that $\delta(-)$ preserves the subcomplex of reduced chains $C^*(A)$. Therefore, the unreduced Hochschild differential $\delta$ descends to a differential in the reduced complex $C^*(A)$. The corresponding cohomology is called the Hochschild cohomology of $A$ and is denoted by $H H^{*}(A)$.

\begin{definition}
The (reduced) Hochschild chain complex of $A$ is defined as the vector space
    $$
C_*(\AAA):=\bigoplus_{n=0}^{\infty} \AAA \otimes\left(\frac{\AAA[1]}{\mathrm{k} \cdot \ua}\right)^{\otimes n} .
$$

with differential $\bee$ of degree $+1$ (mod $2$) defined as
$$
\bee\left(\bfa\right)=\bee_0(\bfa)+\bee_1(\bfa)=\sum(-1)^{|a_0|'+|a^{(1)}|'} a_0 \otimes a^{(1)} \otimes \emm\left(a^{(2)}\right) \otimes a^{(3)} + \sum(-1)^@ \mathfrak{m}\left(a^{(3)}, a_0, a^{(1)}\right) \otimes a^{(2)}
$$ 

$$\text{where } @=\left(\left|a_0\right|'+\left|a^{(1)}\right|^{\prime}+\left|a^{(2)}\right|^{\prime}\right)\left|a^{(3)}\right|^{\prime}$$
\end{definition} 

\begin{definition}
The negative cyclic chain complex of $A$ is defined by
$$
CC_*^{-}(A):=\left( C_*\left(A \right)[[u]], \bee +u \Bee\right)
$$
where $u$ is a formal variable of degree $2$, and $\Bee:C_{*}(A) \to C_{*}(A)$ is the so-called Connes' differential of degree $+1$ (mod $2$) defined by:
$$
\Bee(\bfa)=\sum(-1)^{|a^{(2)}|'\left(\left|a_0\right|'+|a^{(1)}|'\right)} \ \mathbf{1} \otimes a^{(2)}\otimes a_0\otimes a^{(1)}
$$
and satisfies $\Bee^2=\bee ^2=\Bee\bee +\bee \Bee=0$.
The cohomology of the negative cyclic chain complex is called the negative cyclic homology $\operatorname{denoted} H C_*^{-}(A)$.
\end{definition}

Define the degree $0$ bilinear maps $\bee^{1,1}$ and $ 
\Bee^{1,1}: C^*(A) \otimes C_*(A) \rightarrow  C_*(A)$ by:
\begin{equation}
    \bee^{1,1}(\phi , \bfa)=\sum(-1)^{\dagger} \mathfrak{m}_*\left(a^{(3)} \otimes \phi\left(a^{(4)}\right) \otimes a^{(5)} \otimes a_0 \otimes a^{(1)}\right)\otimes a^{(2)} \label{signbee}
\end{equation}
where $\dagger=\left(|a^{(3)}|'+|a^{(4)}|'+|a^{(5)}|'\right)\left(\left|a_0\right|'+|a^{(1)}|'+|a^{(2)}|'\right)+|\phi|^{\prime} |a^{(3)}|'$, and
\begin{equation}
    \Bee^{1,1}(\phi , \bfa)=\sum(-1)^{|\phi|^{\prime} |a^{(2)}|'+\left(|a^{(1)}|'+\left|a_0\right|'\right)\left(|a^{(2)}|'+|a^{(3)}|'+|a^{(4)}|'\right)} \mathbf{1} \otimes a^{(2)}\otimes\phi\left(a^{(3)}\right)\otimes a^{(4)}\otimes a_0\otimes a^{(1)} \label{signBee}
\end{equation}
We use the above operators to recall some homotopy calculus structures on the Hochschild (co)homology  pair $(C^*(A), C_*(A))$ that will be relevant to us.

\begin{definition}
    Contraction $\iota: C^*(A)\otimes C_*^-(A)\to C_*^-(A)$ of degree $0$ is defined by $$\iota\left\{\varphi\right\}(-) := \bee\{\varphi\}(-) + u \Bee\{\varphi\}(-)$$ 
where $\bee\{\varphi\}(-) = \bee^{1,1}(\varphi, -)$ and $\Bee\{\varphi\}(-) = \Bee^{1,1}(\varphi, -)$.
\end{definition}

\begin{definition}\label{def:liederivative}
    Lie derivative, $\mathcal{L}:  C^*(A) \otimes  C_*(A) \rightarrow  C_*(A)$ is defined by 
$$
\begin{aligned}
\mathcal{L}_\phi(\bfa) = \sum &(-1)^{|\phi|^{\prime} \left(|a^{(1)}|'+ |a_0|^{\prime}\right)} a_0 | a^{(1)}\otimes \phi_*\left(a^{(2)}\right) \otimes a^{(3)}\\
&+\sum(-1)^{|a^{(3)}|'\left(|a^{(2)}|'+|a^{(1)}|'+\left|a_0\right|^{\prime}\right)} \phi_*\left(a^{(3)} \otimes a_0 \otimes a^{(1)}\right) | a^{(2)} \\
\end{aligned}
$$
\end{definition} 

For any unreduced cochain $\phi \in \tilde{C}^*(A)$, Definition \ref{def:liederivative} constructs the Lie derivative operator $\mathcal{L}_\phi \in \mathrm{End}(C_*(A))$. Although the full $A_\infty$-structure $\mathfrak{m} = \prod_{n \ge 1} \mathfrak{m}_n$ lies in the unreduced cochain complex $\tilde{C}^1(A)$ rather than the reduced cochain complex $C^*(A)$, it nevertheless induces a well-defined operator $\mathcal{L}_{\mathfrak{m}} \in \mathrm{End}(C_*(A))$. By strict unitality, this operator coincides exactly with the Hochschild boundary operator $\mathfrak{b}$ on reduced chains: $$\mathcal{L}_{\mathfrak{m}} = \mathfrak{b} \quad \text{in } \mathrm{End}(C_*(A)).$$
The Lie derivative $\mathcal{L}$ and contraction $\iota$ are related by the Cartan homotopy formula
\begin{equation}
    [\bee+u \Bee, \iota\{\varphi\}]=-u \cdot \mathcal{L}_{\varphi}-\iota\{[\emm, \varphi]\}\label{cartanmagicform}
\end{equation}
See Getzler \parencite[Section 2]{getzler1993cartan} for more related properties.

The Hochschild cochain complex $ C^{*}(A)$ acts on $ C_{*}(A)$ via the cap product $$\varphi \cap(-):=(-1)^{|\varphi|} \bee\{\varphi\}(-)$$ and defines a left module structure, with respect to the cup product $\cup$ on $H H^{*}(A)$ defined on the chain-level by
$$\phi \cup \psi = \sum (-1)^{|\phi|+|\psi|'(|a^{(1)}|'+|a^{(2)}|'+|a^{(3)}|')+|\phi|'|a^{(1)}|'} \ \emm_*(a^{(1)},\phi(a^{(2)}),a^{(3)}, \psi(a^{(4)}), a^{(5)})$$
for cochains $\phi,\psi\in C^*(A)$.

\subsection{Mukai and higher residue pairings} We recall that an $A_\infty$ algebra $A$ is called smooth (or homologically smooth) if the diagonal bimodule $A_\Delta$ is perfect, that is, split-generated by tensor products of Yoneda modules (see \parencite[Definition 8.1.2]{kontsevich2006notes}). Also, we say that an $A_\infty$ algebra $A$ is proper if the cohomology $H^*(A, \emm_1)$ is finite-dimensional.

Assume $A$ is proper. There is a degree-zero pairing known as the Mukai pairing naturally defined on Hochschild homology:
\[
\langle -,-\rangle_{Muk}: HH_*(A) \otimes HH_*(A) \to \mathrm{k}
\]
Shklyarov \parencite[Theorem 4]{shklyarov2013hirzebruch} proves that this pairing is non-degenerate  if $A$ (as a DG algebra) is in addition smooth. When $A$ is finite dimensional (which can be achieved after passing to the minimal model of $A$), Sheridan \parencite[Proposition 5.22]{sheridan2020formulae} shows that the Mukai-pairing can be described at the chain level in the following way: for $\bfa = a_0|a_1,...,a_n$ and $\bfb =b_0|b_1,...,b_m$, 
\begin{align}
    \langle \bfa, \bfb \rangle_{Muk}= \sum tr[c \mapsto (-1)^{\#} \emm_{*}\left(a^{(3)}, a_0, a^{(1)} \otimes \emm_{*}(a^{(2)}, c, b^{(3)}, b_0, b^{(1)})\otimes b^{(2)}\right)] \label{eq:Mukaipairing}
\end{align}
where  $\#= 1+ |c||\bfb|' + (|a^{(3)}|' +|a_0|' + |a^{(1)}|') + ( |a_0|' + |a^{(1)}|' + |a^{(2)}|')|a^{(3)}|' + ( |b_0|' + |b^{(1)}|' + |b^{(2)}|')|b^{(3)}|'
$.

Assuming $A$ is finite dimensional, we can define the so-called {\it higher residue pairing} on $H C_{*}^{-}(A)$ \parencite[Proposition 5.36]{sheridan2020formulae}:
\begin{equation}
   \langle-,-\rangle_{\text {hres}}:  CC_{*}^{-}(A) \otimes CC_{*}^{-}(A) \rightarrow \mathrm{k}[[u]], \label{defnhres} 
\end{equation}
by extending the chain-level Mukai pairing (\ref{eq:Mukaipairing}) sesquilinearly. That is $\langle u \bfa, \bfb \rangle_{\text {hres }}=-\langle\bfa, u \bfb \rangle_{\text {hres }}=u\langle\bfa, \bfb\rangle_{\text {Muk }}$, for Hochschild chains $\bfa, \bfb$. It commutes with the cyclic differential $\bee+u\Bee$ and so descends to the negative cyclic homology. \\

\subsection{The u-connection on negative cyclic homology}
Let $\mathfrak{m}^{\prime}$ be the cocycle in $C^{*}(A)$ defined as $\mathfrak{m}^{\prime}:=\prod_{k \geq 0}(2-k) \mathfrak{m}_k$, and $\Gamma$ the length operator defined by $\Gamma\left(a_0| a_1,\ldots, a_n\right):=-n \cdot a_0| a_1,\ldots, a_n$.
The meromorphic $u$-connection $\nabla_{\frac{d}{d u}}$ on $HC_*^-(A)$ was first introduced by Kontsevich-Soibelman and Shklyarov (see \parencite[Section 3.1]{cualduararu2021categorical} \parencite[Section 2.2.5]{katzarvovKonPan} \parencite[Sections 11.4-11.5]{kontsevich2006notes} \parencite[Sections 1-2]{shklyarov2016matrix}).
\begin{definition}
The $u$-connection $\nabla_{\frac{d}{d u}}: H C_{*}^{-}(A) \rightarrow u^{-2} H C_{*}^{-}(A)$ is defined by the formula 
\begin{equation}
    \nabla_{\frac{d}{d u}}:=\frac{d}{d u}+\frac{\Gamma}{2 u}+\frac{\iota\left\{\mathfrak{m}^{\prime}\right\}}{2 u^2} \label{defnconnection}
\end{equation}
\end{definition}
Amorim-Tu \parencite[Proposition 2.6]{amorim2022categorical} showed the higher residue pairing is parallel with respect to the $u$ connection. That is,
$$
\frac{d}{d u}\langle\bfa, \bfb \rangle_{\text {hres }}=\left\langle\nabla_{\frac{d}{d u}} \bfa, \bfb\right\rangle_{\text {hres }}-\left\langle\bfa, \nabla_{\frac{d}{d u}} \bfb\right\rangle_{\text {hres }}
$$
The DG version was showed earlier by Shklyarov \parencite[Proposition 3.7]{shklyarov2016matrix}.

\subsection{Formal EP-structure}\label{section-intersection}
The negative cyclic homology of a smooth and proper $A_\infty$-algebra $A$ carries a non-commutative variation of semi-infinite Hodge structure, consisting of its $u$-adic filtration together with the $u$-connection and higher residue pairing. These structures provide a non-commutative analogue of the Hodge-theoretic data appearing in classical geometry. In the context of mirror symmetry, they allow algebraic invariants to be compared with corresponding Hodge-theoretic structures. We use the notion of a formal EP-structure to package the pairing and polarization conditions on these data in a convenient formal framework. This is the formal analogue of the TERP-structure introduced by Hertling \parencite[Section 2.5]{Hertling}. We recall the definition in the form needed here.

\begin{definition}
A formal EP-structure

consists of a $\mathbb{Z}/2\mathbb{Z}$-graded $\mathrm{k}[[u]]$-module $\mathcal{E}$ that is free and of finite rank; a meromorphic connection of even degree $$\nabla_{\frac{d}{d u}}: \mathcal{E}\to u^{-2} \mathcal{E}$$ with at most a second order pole at $u=0$; and an even $\mathrm{k}$-linear pairing $\langle-,-\rangle : \mathcal{E} \otimes \mathcal{E} \to \mathrm{k}[[u]]$ subject to the following conditions: for $s_1,s_2\in \mathcal{E}$ 
\begin{itemize}
    \item the pairing $\langle-,-\rangle $ is $\mathrm{k}[[u]]$-sesquilinear, that is: \[
    \langle f\cdot s_1, g\cdot s_2\rangle = fg^*\langle s_1,s_2\rangle ; \ \ f,g\in \mathrm{k}[[u]], \text{ and } g^*(u)=g(-u) 
    \]
    \item the pairing is covariantly constant with respect to the $u$-connection, that is: \[
    \frac{d}{d u}\langle s_1, s_2 \rangle = \langle \nabla_{\frac{d}{d u}}s_1, s_2 \rangle - \langle s_1, \nabla_{\frac{d}{d u}} s_2 \rangle
    \]
    \item the pairing is symmetric in the sense that 
    \[ \langle s_1, s_2 \rangle = (-1)^{|s_1||s_2|} \langle s_2, s_1 \rangle^* \]
    \item a polarization: that is, the induced pairing $\mathcal{E}/u\mathcal{E} \otimes \mathcal{E}/u\mathcal{E} \to \mathrm{k}$ is non-degenerate.
\end{itemize}
\end{definition}

\begin{definition}
    Let $(\mathcal{E}_1, \nabla_{d/du}^1, \langle -, - \rangle_1)$ and $(\mathcal{E}_2, \nabla_{d/du}^2, \langle -, - \rangle_2)$ be formal EP-structures over $k[[u]]$. A $k[[u]]$-linear map $f: \mathcal{E}_1 \to \mathcal{E}_2$ is a morphism of formal EP-structures if it satisfies \begin{itemize}
        \item connection compatibility: $f \left( \nabla^1_{\frac{d}{du}} (s) \right) = \nabla^2_{\frac{d}{du}} \left( f(s) \right)$ for all $s \in \mathcal{E}_1$, and
        \item pairing compatibility: $\langle f(s_1), f(s_2) \rangle_2 = \langle s_1, s_2 \rangle_1$ for all $s_1, s_2 \in \mathcal{E}_1$.
    \end{itemize}
    An isomorphism of formal EP-structures is an invertible morphism whose inverse is also a morphism.
\end{definition}

Thus, in the terminology of Sheridan \parencite[Section 2]{sheridan2020formulae}, the formal EP-structure above is the structure we will refer to as a {\it polarized semi-infinite Hodge structure}. In particular, throughout this paper, the phrase ``polarized semi-infinite Hodge structure'' refers to the formal EP-structure defined above.
We may omit the term {\it formal} and refer to it as an EP-structure.\\

The $u$-adic filtration on the negative cyclic homology $C_*(A)$ gives rise to a spectral sequence, termed the {\it non-commutative Hodge-to-de Rham spectral sequence}: $$E_1 \cong HH_{*}(A)[[u]] \implies HC_{*}^-(A)$$
where the $E_1$-page consists of the Hochschild homology $HH_*(A)$ (analogous to classical Hodge cohomology $\bigoplus_q H^{p}(-, \Omega_{-}^q)$) and converges to the cyclic homology $HC_*(A)$ (analogous to de Rham cohomology $H_{\mathrm{dR}}^*(-)$). A fundamental conjecture of non-commutative Hodge theory, as formulated by Kontsevich-Soibelman \parencite[Conjecture 9.1.2]{kontsevich2006notes} - the so-called Hodge-to-de-Rham degeneration property in this case is: 
\begin{conjecture}\label{conjecture}
    Let $A$ be $\mathbb{Z}/2\mathbb{Z}$-graded smooth and proper $A_\infty$ algebra. Then, the Hodge-to-de-Rham spectral sequence 
    degenerates at the $E_1$-page. So we have the isomorphism $HH_*(A)[[u]] \cong HC_*^-(A)$
\end{conjecture}

The conjecture has been proved by Kaledin in the case that $A$ is $\mathbb{Z}$-graded (see \parencite[Theorem 5.4]{kaledin2017spectral}). In the general case, to the knowledge of the author, the conjecture remains open if $A$ is, for example, $\mathbb{Z}/2\mathbb{Z}$-graded. 
\begin{theorem}[Cf. {\parencite[Theorem A]{sheridan2020formulae}}]
    Let $A$ be $\mathbb{Z}/2\mathbb{Z}$-graded smooth, proper and weak Calabi-Yau $A_\infty$ algebra such that the Hodge-to-de-Rham spectral sequence degenerates. Then the triple 
    $$\left(HC_*^-(A), \nabla_{\frac{d}{d u}}, \langle-,-\rangle_{\text {hres}}\right)$$ is an EP-structure.
\end{theorem}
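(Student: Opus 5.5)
We have to verify each axiom of a formal EP-structure for the triple $\left(HC_*^-(A), \nabla_{\frac{d}{du}}, \langle-,-\rangle_{\text{hres}}\right)$, and most of them come from results already recalled.

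First, freeness and finite rank. Properness lets us pass to a finite-dimensional minimal model, so the chain-level formulas (\ref{eq:Mukaipairing}) and (\ref{defnhres}) make sense. Then $HH_*(A)$ is finite dimensional, since the reduced Hochschild complex of a smooth and proper algebra has finite-dimensional homology; smoothness gives that the diagonal bimodule is perfect, so $HH_*(A)=A_\Delta\otimes^{\mathbb{L}}_{A^e}A_\Delta$ is finite. The assumed degeneration of the Hodge-to-de-Rham spectral sequence gives $HC_*^-(A)\cong HH_*(A)[[u]]$, and $u$-adic completeness lets us lift a basis of $HH_*(A)$. Hence $HC_*^-(A)$ is free of finite rank over $\mathrm{k}[[u]]$. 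By (\ref{defnconnection}) the connection has even degree and a pole of order at most two. To see that it is well defined on $HC_*^-(A)$, I would use the Cartan formula (\ref{cartanmagicform}) with $\varphi=\mathfrak{m}'$, where $[\mathfrak{m},\mathfrak{m}']=0$, together with the commutator $[\Gamma,\bee+u\Bee]$, to check that $\nabla_{\frac{d}{du}}$ commutes with $\bee+u\Bee$ up to the appropriate $u$-weights.

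Second, the pairing. Sesquilinearity holds by construction in (\ref{defnhres}). The statement that the pairing descends to cohomology is quoted from Sheridan. Covariant constancy is exactly Amorim–Tu's Proposition 2.6, quoted just above.

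Third, symmetry, $\langle s_1,s_2\rangle=(-1)^{|s_1||s_2|}\langle s_2,s_1\rangle^*$. On $u$-constant chains this reduces to graded symmetry of the chain-level Mukai pairing up to homotopy, and the $*$ accounts for sesquilinearity. This is the step I expect to be the main obstacle. Cyclic invariance of the trace in (\ref{eq:Mukaipairing}) swaps the roles of $\bfa$ and $\bfb$ only after reordering the $\emm$-insertions, and the discrepancy has to be exhibited as a boundary, namely $\langle \bee\bfa,\bfb\rangle\pm\langle\bfa,\bee\bfb\rangle$ plus $u$-corrections involving $\Bee$. I would first attempt this directly with the trace formula. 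Failing that, I would cite Sheridan's Proposition 5.36 and Theorem A, where this symmetry is proved at the level of $HC^-$.

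Finally, polarization. Modulo $u$ the pairing is the Mukai pairing on $HC_*^-(A)/uHC_*^-(A)\cong HH_*(A)$, which is nondegenerate for smooth proper $A$ by Shklyarov's Theorem 4, transported to $A_\infty$ algebras via a quasi-isomorphic dg model. Nondegeneracy is also encoded by the weak Calabi–Yau assumption, whose class $\phi$ is represented through the Mukai isomorphism. Assembling these verifications gives the EP-structure.
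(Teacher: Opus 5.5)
Your proposal is correct and essentially matches the paper, which gives no argument of its own and imports the result from Sheridan's Theorem A; your axiom-by-axiom unpacking (freeness from degeneration plus finiteness of $HH_*(A)$, descent of $\nabla_{\frac{d}{du}}$ via the Cartan formula and $[\emm,\emm']=0$, covariant constancy from Amorim--Tu, polarization from non-degeneracy of the Mukai pairing) rests on Sheridan for the one substantive axiom, symmetry. The only slip is your closing aside: the weak Calabi--Yau hypothesis plays no role here and cannot ``encode'' non-degeneracy, because writing $\phi=\langle\omega,-\rangle_{Muk}$ already presupposes that the Mukai pairing is non-degenerate.
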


\section{Tensor product of A-infinity algebras} \label{sec-tensorprod}


\subsection{A combinatorial (tree) description of the tensor product}
Classical constructions of the tensor product of general $A_\infty$- algebras were carried out by Saneblidze and Umble \cite{saneblidze2000diagonal, saneblidze2004diagonals}, and Markl and Shnider \cite{markl2006associahedra} in the language of operads using cellular diagonals on Stasheff associahedra. Amorim’s construction  \cite{amorim2016tensor} reformulates the operadic diagonal of Markl and Shnider into an explicit, combinatorial tree sum over the poset of binary trees $G_n^{\text{bin}}$, where higher-order terms are obtained by grafting corollas and collapsing right-leaning internal edges. While Amorim's original framework was designed for filtered $A_\infty$-algebras over Novikov rings, it translates naturally to classical, unfiltered $A_\infty$-algebras over a field $\mathrm{k}$ of characteristic zero.  In this section, we adopt the formulation due to Amorim \cite{amorim2016tensor}. 

First, we recall the tree conventions and notation. A planar {\it tree} $T=(V(T), E(T))$ will denote a finite connected graph without cycles embedded in the plane, and equipped with a distinguished set of vertices with a unique incident edge. Exactly one of these distinguished vertices is designated as the {\it root}, while the others will be called {\it leaves}. The set of internal vertices, that is, all vertices excluding the root and leaf vertices, will be denoted by $V(T)$. The set $E(T)$ will denote the set of all edges of $T$. An edge $e \in E(T)$ is called an {\it internal edge} if it is not incident to the root or any leaf vertex. We denote the set of internal edges by $E_{int}(T) \subset E(T)$. The edges in $E(T) \setminus E_{int}(T)$ are called {\it external edges} (or {\it legs}). An edge incident to a leaf vertex is called a {\it leaf edge} (an input leg), and the unique edge incident to the root vertex is the {\it root edge} (the output leg). For any internal vertex $v \in V(T)$, there exists a unique path in $T$ from $v$ to the root. The unique edge incident to $v$ along this path is the outgoing edge at $v$; all other edges incident to $v$ are incoming edges. The {\it valency} $val(v)$ of an internal vertex $v$ is defined specifically as the number of incoming edges in $v$. Each internal vertex $v \in V(T)$ will correspond to an operation $\emm_{val(v)}$ in the $A_\infty$-algebra. Let $G_n$ denote the set of planar rooted trees with $n$ leaves such that the valency of incoming edges $\operatorname{val}(v) \geqslant 2$ for all vertices $v \in V(T)$. Given a vertex $v$ we will denote by $v(i)$ the $i^{t h}$ incident edge at $v$. 

For each pair $x, y \in V(T) \cup E(T)$ we say $x$ is under $y$ if the unique path $P_x$ in $T$ from $x$ to the root of $T$ contains $y$. Note that this defines a partial ordering. Suppose $x$ and $y$ are not comparable and denote by $w$ the first (furthest from the root) vertex that $P_x$ and $P_y$ have in common. Then $P_x$ and $P_y$ contain edges $w(i)$ and $w(j)$ respectively. We say that $x$ is to the left of $y$ if $i<j$. 
Given $x \in V(T) \cup E(T)$ we denote by $p_x$ (respectively $q_x$) the left (respectively right) most leaf of $T$ under $x$. Additionally, given $e \in E(T)$ we define integers $J_e, I_e$ and $\tilde{I}_e$ to be zero when $e$ is a leaf edge, and when $e$ is not a leaf edge, $J_e$ is the number of leaves under $e, \tilde{I}_e$ is the number of internal edges under $e$ and $I_e=\tilde{I}_e+1$. We will let $b(T)$ and $nb(T)$ denote the number of binary and non-binary vertices of $T$, respectively.

Let $G_n$ denote the set of planar trees with $n$ leaves, and $G_n^{b i n}$ denote the subset of $G_n$ consisting of trees $T$ with $\operatorname{val}(v)=2$ for all $v \in V(T)$. Elements of $G_n^{b i n}$ are called binary trees. There is a partial order on the set $G_n^{b i n}$, which was introduced in \parencite[Figure 1]{markl2006associahedra}. It corresponds to a local operation within a single vertex that switches an internal edge: specifically, losing a right-leaning internal edge while gaining a left-leaning internal edge as shown in Figure (\ref{figordertree}). The partial order on binary trees is the transitive closure of this relation in the sense that one tree precedes another if and only if it can be reached through a finite sequence of such elementary edge-switching moves. 

\begin{figure}[h]
\begin{center}
 \setlength{\unitlength}{3pt}
\begin{picture}(60,20)(0,0)

\linethickness{0.3mm}
\put(10,10){\line(0,1){10}}
\linethickness{0.3mm}
\put(0,0){\line(1,1){10}}
\linethickness{0.3mm}
\put(20,0){\line(-1,1){10}}
\linethickness{0.3mm}
\put(10,0){\line(-1,1){5}}

\linethickness{0.3mm}
\put(50,10){\line(0,1){10}}
\linethickness{0.3mm}
\put(40,0){\line(1,1){10}}
\linethickness{0.3mm}
\put(60,0){\line(-1,1){10}}
\linethickness{0.3mm}
\put(50,0){\line(1,1){5}}

\put(30,10){\makebox(0,0)[cc]{$\leq$}}

\end{picture}
\end{center}
\caption{Generator relation for the partial order on binary trees}
\label{figordertree}
\end{figure}
There is an absolute minimum and maximum for this order. The minimum (or maximum), which we denote by $\underline{M}_n$ (or  $\overline{M}_n)$, is the binary tree with strictly right- (or left-) leaning internal edges. Given $T \in G_n$ we define $T_{\max } \in G_n^{b i n}$ as the maximal (with respect to the order above) binary tree that resolves $T$. That is, $T$ can be obtained by collapsing several edges in $T_{\max }$. An example is shown in Figure \ref{figmaxtree}. Similarly, we define $T_{\min }$ as the minimal binary tree that resolves $T$. The tree in $G_n$ with exactly one internal vertex, called the $n$-corolla, is denote by $c_n$.

We introduce two subsets $L_n, R_n\subset G_n$. Let $L_n$ denote the subset of $G_n$ obtained by grafting corollas in all possible ways while avoiding the last leaf on any vertex. Similarly, Let $R_n$ denote the subset of $G_n$ obtained by grafting corollas in all possible ways while avoiding the first leaf on any vertex. More formally, $T \in L_n$ if either $T=c_n$ or there exist integers $1 \leqslant i_1<\ldots<i_l<m$ and trees $T_j \in L_{n_j}$ such that
$$
T=c_m \circ_{i_1, \ldots, i_l}\left(T_1, \ldots, T_l\right),
$$

\begin{figure}[h]
\begin{center}
\setlength{\unitlength}{2pt}
\begin{picture}(140,50)(0,0)

\linethickness{0.3mm}
\put(30,40){\line(0,1){10}}
\linethickness{0.3mm}
\put(0,10){\line(1,1){30}}
\linethickness{0.3mm}
\put(10,10){\line(2,3){20}}
\linethickness{0.3mm}
\put(20,10){\line(-2,3){5}}
\linethickness{0.3mm}
\put(30,10){\line(1,2){10}}
\linethickness{0.3mm}
\put(40,10){\line(0,1){20}}
\linethickness{0.3mm}
\put(50,10){\line(-1,2){10}}
\linethickness{0.3mm}
\put(60,10){\line(-1,1){30}}

\linethickness{0.3mm}
\put(110,40){\line(0,1){10}}
\linethickness{0.3mm}
\put(80,10){\line(1,1){30}}
\linethickness{0.3mm}
\put(90,10){\line(1,1){25}}
\linethickness{0.3mm}
\put(100,10){\line(-1,1){5}}
\linethickness{0.3mm}
\put(110,10){\line(1,1){15}}
\linethickness{0.3mm}
\put(120,10){\line(1,1){10}}
\linethickness{0.3mm}
\put(130,10){\line(1,1){5}}
\linethickness{0.3mm}
\put(140,10){\line(-1,1){30}}

\put(30,2){\makebox(0,0)[cc]{$T$}}
\put(110,2){\makebox(0,0)[cc]{$T_{max}$}}

\end{picture}
\end{center}
\caption{A maximal tree resolving $T$}
\label{figmaxtree}
\end{figure}
where $\circ_{i_1, \ldots, i_l}$ tells us to graft the root of $T_j$ to the leaf $i_j$ of $c_m$.

Now suppose that $T \in L_n$ has $k$ internal edges. 
Let $R(S)$ denote the set of right-leaning internal edges of a binary tree $S$ and let $|R(S)|$ denote its order. We define
$$
\alpha(T):=\sum_{\substack{S \in G_n^{b i n} \\ S \geqslant T_{\text {max }} \\|R(S)|=k}} S / R(S),
$$
where $S / R(S)$ is the tree obtained by collapsing all the edges of $S$ in $R(S)$. The sum taken here lives in $\mathrm{k} G_n$, the free $\mathrm{k}$-vector space spanned by the set $G_n$ of planar trees with $n$ leaves. Note that $\alpha(T)$ is non-zero as it always includes $\alpha_0(T):=T_{\max } / R\left(T_{\max }\right)$. For an example see Figure \ref{figexalpha} below. \\

\begin{figure}[h]
\begin{center}
 \setlength{\unitlength}{1.8pt}
\begin{picture}(130,30)(0,0)

\put(3,10){\makebox(0,0)[cc]{$\alpha\Bigg($}}

\linethickness{0.3mm}
\put(25,20){\line(0,1){10}}
\linethickness{0.3mm}
\put(10,0){\line(3,4){15}}
\linethickness{0.3mm}
\put(20,0){\line(0,1){13,3}}
\linethickness{0.3mm}
\put(30,0){\line(-3,4){10}}
\linethickness{0.3mm}
\put(40,0){\line(-3,4){15}}
\linethickness{0.3mm}

\put(45,10){\makebox(0,0)[cc]{$\Bigg)$}}
\put(55,10){\makebox(0,0)[cc]{$=$}}

\linethickness{0.3mm}
\put(75,10){\line(0,1){20}}
\linethickness{0.3mm}
\put(60,0){\line(3,4){15}}
\linethickness{0.3mm}
\put(70,0){\line(1,2){5}}
\linethickness{0.3mm}
\put(80,0){\line(-1,2){5}}
\linethickness{0.3mm}
\put(90,0){\line(-3,4){15}}
\linethickness{0.3mm}

\put(95,10){\makebox(0,0)[cc]{$+$}}

\linethickness{0.3mm}
\put(115,20){\line(0,1){10}}
\linethickness{0.3mm}
\put(100,0){\line(3,4){15}}
\linethickness{0.3mm}
\put(110,0){\line(3,4){10}}
\linethickness{0.3mm}
\put(120,0){\line(0,1){13,3}}
\linethickness{0.3mm}
\put(130,0){\line(-3,4){15}}
\linethickness{0.3mm}

\put(130,10){\makebox(0,0)[cc]{$.$}}
\end{picture}
\end{center}
\caption{Example of $\alpha$} 
\label{figexalpha}
\end{figure}

For an $A_\infty$-algebra $A$ and a tree $T\in G_n$, we define a map $T^A: A[1]^{\otimes n}\to A[1]$ by assigning to each vertex $v \in V(T)$ the map $\mathfrak{m}_{v a l(v)}^A$.

\begin{theorem}[{\parencite[Theorem 4.1]{amorim2016tensor}}]
    
 Let $\AAA$ and $\BB$ be unital $A_{\infty}$-algebras.  Then the tensor product $\AAA \otimes_{\infty} \BB$ is quasi-isomorphic to the $A_{\infty}$-algebra $\left(\AAA \otimes \BB, \mathfrak{m}^{\otimes}\right)$ with operations given by 
$$
\begin{aligned}
\mathfrak{m}_{1}^{\otimes} &= \mathfrak{m}_{1}^\AAA \otimes id + id \otimes \mathfrak{m}_{1}^\BB \\
\mathfrak{m}_{n}^{\otimes}\left(a_1 \otimes b_1, \ldots, a_n \otimes b_n\right) &=\sum_{T \in L_n}(-1)^\epsilon T^A\left(a_1, \ldots a_n\right) \otimes \alpha(T)^B\left(b_1, \ldots, b_n\right),
\end{aligned}
$$
extended linearly.

The sign exponent is defined as $\epsilon=\theta(T)+\triangleleft+|a|\left|E_{\text {int }}(T)\right|+\gamma_{a}+\gamma_{b}$, with
\begin{align}
\triangleleft &=\sum_{p>q}|b_q||a_p|\notag\\
\gamma_{a}&=\sum_{v \in V(T)} \sum_{i<p_v}\left|a_i\right|, \quad \gamma_{b}=\sum_{v \in V(\alpha(T))} \sum_{i<p_v}\left|b_i\right| \text { and } \notag\\
\theta(T)&=\sum_{v \in V(T)}\left(\sum_i\left((\operatorname{val}(v)-i-1) I_{v(i)}+\tilde{I}_{v(i)}+J_{v(i)}\right)+\sum_{i<j} I_{v(i)}\left(\tilde{I}_{v(j)}+J_{v(j)}\right)\right) \label{signoftrees}
\end{align}

\end{theorem}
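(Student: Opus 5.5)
The statement has two parts. First, the operations $\mathfrak{m}^{\otimes}$ satisfy the $A_\infty$ relations. Second, $(A\otimes B,\mathfrak{m}^{\otimes})$ is quasi-isomorphic to the operadic tensor product $A\otimes_\infty B$ of Markl--Shnider and Saneblidze--Umble. I would prove both by reducing to a combinatorial identity in the free non-symmetric operad $\mathrm{k}G=\bigoplus_n \mathrm{k}G_n$. In that operad the $A_\infty$ operad is modelled by the cellular chains of the associahedra $K_n$, with the corolla $c_n$ as top cell and differential $\partial$ summing over all ways of splitting a vertex.

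The key object is the assignment
\[
\Delta(c_n)=\sum_{T\in L_n}\pm\, T\otimes \alpha(T)\in \mathrm{k}G_n\otimes \mathrm{k}G_n .
\]
Step one is to identify this with the Markl--Shnider cellular diagonal on $K_n$. Their diagonal is described as a sum over pairs of faces $(F,G)$ with $F$ ranging over trees obtained by left-grafting corollas (the set $L_n$), and with $G$ obtained from binary trees $S\geq F_{\max}$ by collapsing their right-leaning edges. The definition of $\alpha(T)$ is exactly this second factor, with the constraint $|R(S)|=k$ enforcing the complementary dimension condition $\dim F+\dim G=n-2$. So step one amounts to matching the index sets of the two formulas.

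Step two is to verify that $\Delta$ is a chain map, $(\partial\otimes 1+1\otimes\partial)\Delta(c_n)=\Delta(\partial c_n)$, where $\Delta$ is extended multiplicatively over grafting. Granting this, assigning $c_n\mapsto \mathfrak{m}^{\otimes}_n$ gives a morphism of dg operads $A_\infty\to A_\infty\otimes_H A_\infty$. Composing with the $A_\infty$-structures on $A$ and $B$ then yields the $A_\infty$ relations for $\mathfrak{m}^{\otimes}$. Strict unitality is preserved because every non-binary vertex kills units, and $\mathfrak{m}_2^{\otimes}$ is the ordinary product on $A\otimes B$. Quasi-isomorphism with $A\otimes_\infty B$ follows because any two cellular diagonals that are chain approximations to the geometric diagonal, and agree in arity $2$, are homotopic through operad maps. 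A homotopy of operad maps transfers to an $A_\infty$ quasi-isomorphism whose linear term is the identity.

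The main obstacle is step two, the chain-map identity with its signs. My approach would be induction on $n$ using the poset structure on $G_n^{bin}$. The boundary of $T\otimes\alpha(T)$ splits into codimension-one terms of two kinds. Terms where a vertex of $T$ splits into a non-$L$ tree cancel in pairs against terms where a vertex of some $S/R(S)$ splits by restoring a right edge; this amounts to moving up one covering relation in the Tamari-type order. The remaining terms assemble, by the inductive hypothesis applied to the two grafted pieces, into $\Delta(\partial c_n)$. For the signs I would fix orientations of the cells via the sign $\theta(T)$ together with the Koszul terms $\triangleleft,\gamma_a,\gamma_b$ and $|a||E_{int}(T)|$. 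I would then check the cancellations one elementary edge-switching move of Figure~\ref{figordertree} at a time. This check is tedious, so I would defer to \cite{amorim2016tensor} and \cite{markl2006associahedra} for the detailed sign bookkeeping, specializing Amorim's filtered argument by setting the energy filtration to be trivial.
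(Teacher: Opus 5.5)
Your outline matches how this statement is obtained: the paper gives no proof of its own and simply imports it as Amorim's Theorem 4.1, whose argument is the Markl--Shnider cellular diagonal on the associahedra rewritten as the tree sum $\sum_{T\in L_n}\pm T\otimes\alpha(T)$ and then applied to $A$ and $B$ through the Hadamard product. Deferring the chain-map and sign verification to \cite{amorim2016tensor} and \cite{markl2006associahedra}, as you propose, is exactly what the paper does.
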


We give an illustration of the theorem with explicit formulas for $\mathfrak{m}_3^{\otimes}$, and $\mathfrak{m}_4^{\otimes}$ in Figure \ref{figm3m4} below with signs determined by $\theta(T)$. \\

\begin{figure}[h]
\begin{center}
 \setlength{\unitlength}{2.1pt}
\begin{picture}(130,20)(0,0)
\put(0,15){\makebox(0,0)[cc]{$\emm_3^{\otimes}=$}}

\linethickness{0.3mm}
\put(20,10){\line(0,1){10}}
\linethickness{0.3mm}
\put(10,0){\line(1,1){10}}
\linethickness{0.3mm}
\put(30,0){\line(-1,1){10}}
\linethickness{0.3mm}
\put(20,0){\line(-1,1){5}}

\put(35,10){\makebox(0,0)[cc]{$\otimes$}}

\linethickness{0.3mm}
\put(50,0){\line(0,1){20}}
\linethickness{0.3mm}
\put(40,0){\line(1,1){10}}
\linethickness{0.3mm}
\put(60,0){\line(-1,1){10}}

\put(70,10){\makebox(0,0)[cc]{$+$}}

\linethickness{0.3mm}
\put(90,0){\line(0,1){20}}
\linethickness{0.3mm}
\put(80,0){\line(1,1){10}}
\linethickness{0.3mm}
\put(100,0){\line(-1,1){10}}

\put(105,10){\makebox(0,0)[cc]{$\otimes$}}

\linethickness{0.3mm}
\put(120,10){\line(0,1){10}}
\linethickness{0.3mm}
\put(110,0){\line(1,1){10}}
\linethickness{0.3mm}
\put(130,0){\line(-1,1){10}}
\linethickness{0.3mm}
\put(120,0){\line(1,1){5}}

\put(130,10){\makebox(0,0)[cc]{$,$}}

\end{picture}
\vspace{0.3cm}

 \setlength{\unitlength}{1.5pt}
\begin{picture}(270,70)(0,0)
\put(0,55){\makebox(0,0)[cc]{$\emm_4^{\otimes}=$}}

\linethickness{0.3mm}
\put(25,60){\line(0,1){10}}
\linethickness{0.3mm}
\put(10,40){\line(3,4){15}}
\linethickness{0.3mm}
\put(40,40){\line(-3,4){15}}
\linethickness{0.3mm}
\put(20,40){\line(-3,4){5}}
\linethickness{0.3mm}
\put(30,40){\line(-3,4){10}}

\put(45,55){\makebox(0,0)[cc]{$\otimes$}}

\linethickness{0.3mm}
\put(65,60){\line(0,1){10}}
\linethickness{0.3mm}
\put(50,40){\line(3,4){15}}
\linethickness{0.3mm}
\put(80,40){\line(-3,4){15}}
\linethickness{0.3mm}
\put(60,40){\line(1,4){5}}
\linethickness{0.3mm}
\put(70,40){\line(-1,4){5}}

\put(90,55){\makebox(0,0)[cc]{$-$}}

\linethickness{0.3mm}
\put(115,60){\line(0,1){10}}
\linethickness{0.3mm}
\put(100,40){\line(3,4){15}}
\linethickness{0.3mm}
\put(130,40){\line(-3,4){15}}
\linethickness{0.3mm}
\put(110,40){\line(0,1){13,33}}
\linethickness{0.3mm}
\put(120,40){\line(-3,4){10}}

\put(135,55){\makebox(0,0)[cc]{$\otimes$}}

\linethickness{0.3mm}
\put(155,50){\line(0,1){20}}
\linethickness{0.3mm}
\put(140,40){\line(3,4){15}}
\linethickness{0.3mm}
\put(170,40){\line(-3,4){15}}
\linethickness{0.3mm}
\put(150,40){\line(1,2){5}}
\linethickness{0.3mm}
\put(160,40){\line(-1,2){5}}

\put(180,55){\makebox(0,0)[cc]{$-$}}

\linethickness{0.3mm}
\put(205,60){\line(0,1){10}}
\linethickness{0.3mm}
\put(190,40){\line(3,4){15}}
\linethickness{0.3mm}
\put(220,40){\line(-3,4){15}}
\linethickness{0.3mm}
\put(200,40){\line(0,1){13,33}}
\linethickness{0.3mm}
\put(210,40){\line(-3,4){10}}

\put(225,55){\makebox(0,0)[cc]{$\otimes$}}

\linethickness{0.3mm}
\put(245,60){\line(0,1){10}}
\linethickness{0.3mm}
\put(230,40){\line(3,4){15}}
\linethickness{0.3mm}
\put(260,40){\line(-3,4){15}}
\linethickness{0.3mm}
\put(240,40){\line(3,4){10}}
\linethickness{0.3mm}
\put(250,40){\line(0,1){13,33}}


\put(0,15){\makebox(0,0)[cc]{$+$}}

\linethickness{0.3mm}
\put(25,10){\line(0,1){20}}
\linethickness{0.3mm}
\put(10,0){\line(3,4){15}}
\linethickness{0.3mm}
\put(40,0){\line(-3,4){15}}
\linethickness{0.3mm}
\put(20,0){\line(1,2){5}}
\linethickness{0.3mm}
\put(30,0){\line(-1,2){5}}

\put(45,15){\makebox(0,0)[cc]{$\otimes$}}

\linethickness{0.3mm}
\put(65,20){\line(0,1){10}}
\linethickness{0.3mm}
\put(50,0){\line(3,4){15}}
\linethickness{0.3mm}
\put(80,0){\line(-3,4){15}}
\linethickness{0.3mm}
\put(60,0){\line(3,4){10}}
\linethickness{0.3mm}
\put(70,0){\line(0,1){13,33}}

\put(90,15){\makebox(0,0)[cc]{$-$}}

\linethickness{0.3mm}
\put(115,20){\line(0,1){10}}
\linethickness{0.3mm}
\put(100,0){\line(3,4){15}}
\linethickness{0.3mm}
\put(130,0){\line(-3,4){15}}
\linethickness{0.3mm}
\put(110,0){\line(-3,4){5}}
\linethickness{0.3mm}
\put(120,0){\line(-1,4){5}}

\put(135,15){\makebox(0,0)[cc]{$\otimes$}}

\linethickness{0.3mm}
\put(155,20){\line(0,1){10}}
\linethickness{0.3mm}
\put(140,0){\line(3,4){15}}
\linethickness{0.3mm}
\put(170,0){\line(-3,4){15}}
\linethickness{0.3mm}
\put(150,0){\line(1,4){5}}
\linethickness{0.3mm}
\put(160,0){\line(3,4){5}}

\put(180,15){\makebox(0,0)[cc]{$+$}}

\linethickness{0.3mm}
\put(205,20){\line(0,1){10}}
\linethickness{0.3mm}
\put(190,0){\line(3,4){15}}
\linethickness{0.3mm}
\put(220,0){\line(-3,4){15}}
\linethickness{0.3mm}
\put(200,0){\line(1,4){5}}
\linethickness{0.3mm}
\put(210,0){\line(-1,4){5}}

\put(225,15){\makebox(0,0)[cc]{$\otimes$}}

\linethickness{0.3mm}
\put(245,20){\line(0,1){10}}
\linethickness{0.3mm}
\put(230,0){\line(3,4){15}}
\linethickness{0.3mm}
\put(260,0){\line(-3,4){15}}
\linethickness{0.3mm}
\put(240,0){\line(3,4){10}}
\linethickness{0.3mm}
\put(250,0){\line(3,4){5}}

\put(270,15){\makebox(0,0)[cc]{$.$}}

\end{picture}
\end{center}
\caption{Formulas for $\emm_3^{\otimes}$ and $\emm_4^{\otimes}$.}
\label{figm3m4}
\end{figure}

\begin{proposition}\label{prop:18UW}
 There is the following alternative description when $n \geqslant 3$
$$
\mathfrak{m}_{n}^{\otimes}=\sum_{\substack{\left|V(U)\right|+\left|V(W)\right|=n \\ U_{\max } \leqslant W_{\text {min }}}} \pm U^A \otimes W^B
$$ 
where $U\in L_n$, and $W$ is a term of $\alpha(U)\in R_n$.
\end{proposition}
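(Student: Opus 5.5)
Starting from the tree formula of \parencite[Theorem 4.1]{amorim2016tensor},
$$\mathfrak{m}_n^{\otimes}=\sum_{T\in L_n}\pm\, T^A\otimes \alpha(T)^B,$$
the plan is to expand $\alpha(T)$ into its individual terms and re-index the double sum by pairs $(U,W)$. Take $U=T\in L_n$ and let $W=S/R(S)$ range over the terms of $\alpha(U)$, where $S\in G_n^{bin}$, $S\geqslant U_{\max}$ and $|R(S)|=|E_{int}(U)|$. There are three things to verify: (i) each such $W$ lies in $R_n$; (ii) the vertex count $|V(U)|+|V(W)|=n$; (iii) the order condition $U_{\max}\leqslant W_{\min}$. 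The signs are simply carried along from $\epsilon$, so the statement is only up to $\pm$.

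For (i), collapsing all right-leaning edges of a binary tree $S$ yields a tree in which every internal edge is left-leaning and sits in a non-last slot at its parent. By the planar mirror of the recursive description of $L_n$, this means that no vertex of $S/R(S)$ uses the first leaf except through its leftmost branch, which is the description of $R_n$. I would make this precise by induction on $n$, using the decomposition $S=c_2\circ(S_1,S_2)$ at the root.

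For (ii), the key is a count. A binary tree with $n$ leaves has $n-1$ internal vertices, and hence $n-2$ internal edges. Collapsing $|R(S)|=k$ edges leaves $n-1-k$ vertices in $W$. Since $U$ has $k$ internal edges, it has $k+1$ vertices. The total is therefore $(k+1)+(n-1-k)=n$.

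For (iii), $W_{\min}$ is the minimal binary resolution of $S/R(S)$. Re-expanding the collapsed edges as right-leaning edges gives the minimal way to resolve, so $W_{\min}=S\geqslant U_{\max}$. Here I will need the claim that re-inserting the collapsed right-leaning edges recovers exactly $S$ as the minimum. This follows because the minimum resolution of a corolla $c_m$ is $\underline{M}_m$, which is all right-leaning. Consequently $W_{\min}$ has the same left-leaning edges as $S$ and a maximal set of right-leaning edges, so it equals $S$.

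Conversely, I must show that every pair $(U,W)$ with $U\in L_n$, $W$ a term of $\alpha(U)$ and $U_{\max}\leqslant W_{\min}$ arises exactly once. This uses the fact that $S=W_{\min}$ is determined by $W$, together with $|R(W_{\min})|=n-1-|V(W)|=|V(U)|-1=|E_{int}(U)|$, which holds automatically from the vertex count. Hence the map $(T,S)\mapsto (T,S/R(S))$ is a bijection onto the indexing set of Proposition \ref{prop:18UW}.

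The restriction $n\geqslant 3$ is there to avoid the degenerate $n=2$ case. There $U=W=c_2$ gives only $2$ vertices in total, so the count in (ii) fails as stated; presumably the right formula is $|V(U)|+|V(W)|=n$ only once edges exist. I would check this boundary separately.

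The main obstacle is the identification $W_{\min}=S$ and the converse bijectivity. This requires care with the partial order of \parencite{markl2006associahedra}, specifically that collapsing exactly the right-leaning edges is injective on binary trees. I would prove it by induction on the number of leaves via the root splitting.
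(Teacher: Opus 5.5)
Your route is the natural direct proof of what the paper only cites from \parencite[Proposition 4.3]{amorim2016tensor}. You re-index the tree formula of \parencite[Theorem 4.1]{amorim2016tensor} by the individual terms $S/R(S)$ of $\alpha(U)$, use the count $(k+1)+(n-1-k)=n$, and use the identity $(S/R(S))_{\min}=S\geqslant U_{\max}$. Steps (ii) and (iii) are correct. Injectivity of $S\mapsto S/R(S)$ is immediate from $(S/R(S))_{\min}=S$, so no separate induction is needed.

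Step (i), however, is wrong as written, because you have the slot condition backwards. In the paper's convention a right-leaning edge joins a \emph{left} child to its parent. Indeed, the minimum $\underline{M}_n$, all of whose edges are right-leaning, is $((\cdots(x_1x_2)x_3\cdots)x_n)$ (cf.\ Figure \ref{figordertree}). A vertex of $S/R(S)$ is then a maximal chain of such edges, i.e.\ a copy of some $\underline{M}_m$ inside $S$. Its first input is the left child of the lowest vertex of that chain, and this input must be a leaf, since otherwise its edge would be right-leaning and would have been collapsed. So every internal edge of $S/R(S)$ enters its parent in a \emph{non-first} slot, which is exactly the recursive description of $R_n$. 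Your claim that surviving edges sit in non-last slots is false. For $S=x_1(x_2x_3)$ one has $R(S)=\emptyset$ and $W=S\in R_3$, yet its only internal edge is in the last slot. Moreover, ``non-last slot'' characterizes $L_n$, and a planar mirror cannot convert one description into the other, since mirroring does not change $W$.

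Your converse, as phrased (pairs in which $W$ is already a term of $\alpha(U)$), reduces to that injectivity, so the computation of $|R(W_{\min})|$ is not needed for it. The substantive converse is how the proposition is invoked in Proposition \ref{prop18} and Theorem \ref{thm:gencommutingsub}, where the sum runs over all $U\in L_n$, $W\in R_n$ with $|V(U)|+|V(W)|=n$ and $U_{\max}\leqslant W_{\min}$. It says that every such $W$ is a term of $\alpha(U)$. Your count does prove this, but only because $W\in R_n$. In that case every internal edge of $W$ enters in a slot $\geqslant 2$, so it becomes left-leaning in $W_{\min}$. Hence $R(W_{\min})$ is exactly the set of edges created by resolving the vertices of $W$. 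This gives $W_{\min}/R(W_{\min})=W$ and $|R(W_{\min})|=\sum_{v}(\operatorname{val}(v)-2)=n-1-|V(W)|=|E_{int}(U)|$. For $W\notin R_n$ both identities fail, so you should say explicitly where membership in $R_n$ enters.

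Finally, drop the remark on $n=2$. There $U=W=c_2$ gives $|V(U)|+|V(W)|=2=n$, so your count holds for all $n\geqslant 2$. The paper itself uses this instance in Case 1 of the proof of Theorem \ref{thm:gencommutingsub}. The hypothesis $n\geqslant 3$ is therefore not explained by any failure of the count.
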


Proof. See \parencite[Proposition 4.3]{amorim2016tensor}. \\

\noindent

The following lemma will be used to prove a subsequent result, so we state it here.

\begin{lemma}[{\parencite[Lemma 6.1]{amorim2016tensor}}]{\label{treelemma}} Consider a tree $U \in L_n,$ and $a_1, \ldots a_n \in \AAA$. Let $j_a$ be the number of $a_i$ such that $a_i=\mathbf{1}_\AAA$ and denote by $b(U)$ the number of vertices of $U$ of valency 2 . If $U^A\left(a_1, \ldots, a_n\right) \neq 0$, then $b(U) \geqslant j_a$ unless $j_a=n$, in which case $b(U)=n-1$. Moreover the same is true if $U \in R_n$.
\end{lemma}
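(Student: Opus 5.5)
The plan is to count, for a nonvanishing evaluation $U^A(a_1,\ldots,a_n)$, how the units among the inputs can be absorbed, using strict unitality. First I treat the case $j_a=n$, where every input is $\ua$. Strict unitality forces every vertex of $U$ that receives a unit on one of its incoming edges to have valency $2$, since otherwise $\emm_{val(v)}(\ldots,\ua,\ldots)=0$. Inductively from the leaves up, every vertex receives only outputs equal to $\pm\ua$, because $\emm_2(\ua,\ua)=\ua$. So every vertex is binary. A binary tree with $n$ leaves has exactly $n-1$ internal vertices, so $b(U)=n-1$.

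For $j_a<n$ I argue by induction on the number of internal vertices, or equivalently on $n$, using the recursive description of $L_n$ as $U=c_m\circ_{i_1,\ldots,i_l}(T_1,\ldots,T_l)$ with $T_j\in L_{n_j}$. Apply the induction hypothesis to each subtree $T_j$. Its inputs contribute $j_j$ units, and $b(T_j)\ge j_j$ unless all of its inputs are units, in which case $b(T_j)=n_j-1$ and $T_j^A$ outputs $\pm\ua$. Now look at the root corolla $c_m$. Its inputs consist of the direct leaves and the outputs of the $T_j$.

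If some input to the root is $\ua$, whether it is a direct unit leaf or comes from a fully-unit subtree, then nonvanishing forces $m=2$, so the root is binary. I then split into cases.

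\emph{Case (i): both root inputs are units.} Then all inputs are units, which contradicts $j_a<n$.

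\emph{Case (ii): exactly one root input is a unit.} Suppose it comes from a direct leaf. Then the root contributes one binary vertex that accounts for that one unit. Suppose instead it comes from a fully-unit subtree $T_j$ with $n_j$ unit leaves. Then $b(T_j)+1=n_j$ binary vertices account for those $n_j$ units. The other input is either a non-unit leaf or a subtree with $b\ge$ its number of units.

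\emph{Case (iii): no root input is a unit.} Then each direct leaf is a non-unit, and summing $b(T_j)\ge j_j$ over the subtrees gives the bound. The only subtlety is that no subtree can be fully unit in this case.

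Summing over the pieces gives $b(U)\ge j_a$ in all cases. For $U\in R_n$ the argument is symmetric: the mirror recursion avoids the first leaf instead of the last, and strict unitality is two-sided up to sign. Alternatively, one can note that the counting never used the $L_n$ restriction, so it applies to any tree in $G_n$.

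I expect the main obstacle to be the bookkeeping in case (ii). There I must check that a fully-unit subtree feeding a binary root gains exactly one extra binary vertex and loses nothing, which is precisely the $n-1$ versus $n$ offset in the exceptional case. The cleanest fix is to prove the slightly stronger statement $b(U)\ge j_a$ when $j_a<n$ together with $b(U)=n-1$ when $j_a=n$ simultaneously by induction. No sign tracking is needed, since only vanishing matters.
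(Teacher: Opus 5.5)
Your argument is correct. The paper gives no proof of this lemma; it quotes it from Amorim's Lemma 6.1. So your induction is a self-contained justification, not a variant of an argument in the text.

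Two points of precision are worth fixing.

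\emph{Applying the induction hypothesis.} To apply it to a grafted subtree $T_j$, you need $T_j^A(\ldots)\neq 0$. This holds because $U^A$ is a composite of multilinear maps in which $T_j^A$ feeds an input of the root. State this explicitly.

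\emph{What ``a root input is $\ua$'' means.} It must be read structurally: either a unit leaf, or the output of a subtree all of whose leaves are units. It cannot be a statement about values. A subtree with non-unit leaves can evaluate to a multiple of $\ua$, for example $\emm_2(x,y)=\ua$ for suitable $x,y\neq\ua$ in an associative algebra with invertible elements. Under the value reading, your Case (i) conclusion ``then all inputs are units'' would be false. Under the structural reading the count is unaffected, since such a subtree is covered by the bound $b(T_j)\geq j_j$.

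\emph{A non-inductive version.} The same count works without induction, which also makes your remark about $G_n$ transparent. Assume $j_a<n$ and take any maximal subtree $S$ all of whose leaves are units; it is a proper subtree because not all leaves are units. Nonvanishing forces three things:
\begin{itemize}
\item $S$ is binary, so it contributes $|S|-1$ binary vertices;
\item $S$ outputs $\pm\ua$;
\item the vertex $v_S$ into which the root edge of $S$ feeds is binary.
\end{itemize}
Distinct maximal $S$ have distinct $v_S$, since a binary vertex fed by two all-unit subtrees would contradict maximality. Also, no $v_S$ lies inside any such $S'$, because the subtree at $v_S$ contains a non-unit leaf. Summing gives $b(U)\geq\sum_S|S|=j_a$ for every tree in $G_n$. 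So the $L_n$ or $R_n$ hypothesis plays no role, as you observed, and the $R_n$ case needs no separate mirror argument.
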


\subsection{Properties of the tensor product}
\begin{definition}
Let $\left(\AAA, \mathfrak{m}^\AAA\right)$ and $(C, \mu)$ be strictly unital $A_{\infty}$-algebras, we say $\AAA$ is a subalgebra of $C$ if $\AAA \subseteq C$, $\ua=\mathbf{1}_C$ and for all $k>0$ and $a_1, \ldots a_k \in A$ we have
$$
\begin{aligned}
& \mu_{k}\left(a_1, \ldots, a_k\right)=\mathfrak{m}_{k}^\AAA\left(a_1, \ldots, a_k\right)
\end{aligned}
$$ 
\end{definition}

\begin{definition}{\label{commutingsubalgebradefn}}
Let $\left(\AAA, \mathfrak{m}^\AAA\right)$ and $\left(\BB, \mathfrak{m}^\BB\right)$ be $A_{\infty}$-algebras. Suppose $\AAA$ and $\BB$ are $A_\infty$-subalgebras of $(C, \mu)$. Define a map $K: \AAA \otimes \BB \longrightarrow C$ by $K(a \otimes b)=(-1)^{|a|} \mu_{2}(a, b)$. We say $\AAA$ and $\BB$ are {\it commuting subalgebras} if given $c=K(a \otimes b)$ and $c_1, \ldots, c_k \in C$ such that for each $i$, $c_i=a_i$ or $c_i=b_i$ for some $a_i \in \AAA$ and $b_i \in \BB$, the following conditions hold. \\

\noindent
(a) For $k>0,$ $ \mu_{k}\left(c_1, \ldots, c_k\right)=0$ unless
\begin{itemize}
    \item $k=2$ and $c_1 \in \AAA, c_2 \in \BB$ (or vice-versa) in which case, \begin{equation}
        \mu_{2}\left(c_1, c_2\right)+(-1)^{|c_1|'|c_2|'} \mu_{2}\left(c_2, c_1\right)=0 \label{eq:m2comsub}
    \end{equation}
    \item  $c_i=a_i$ for all $i$
    \item  $c_i=b_i$ for all $i$,
\end{itemize}

\noindent
(b) $\mu_{k+1}\left(c_1, \ldots, c_i, c, c_{i+1}, \ldots, c_k\right)=0$ unless
\begin{itemize}
    \item  $c_i=a_i$ for all $i$, in which case it equals
\begin{equation}
    (-1)^{|b| \sum_{j>i}|a_j|'} K\left(\mathfrak{m}_{k+1}^\AAA\left(a_1 \ldots a_i, a, \ldots, a_k\right) \otimes b\right) \label{eq:mkcomsuba}
\end{equation}
    \item $c_i=b_i$ for all $i$, in which case it equals
\begin{equation}
    (-1)^{|a|\left(\sum_{j \leqslant i}|b_j|'+1\right)} K\left(a \otimes \mathfrak{m}_{k+1}^\BB\left(b_1 \ldots b_i, b, \ldots, b_k\right)\right) \label{eq:mkcomsubb}
\end{equation}
\end{itemize}
\end{definition}

A more conceptual proof of the following proposition for filtered $A_\infty$ algebras can be found in \parencite[Proposition 6.9]{amorim2016tensor}. For completeness, we give a more combinatorial proof using the planar trees introduced above.

\begin{proposition}\label{prop18}
 The $A_\infty$ algebras $(\AAA, \mathfrak{m}^\AAA)$ and $(\BB, \mathfrak{m}^\BB)$ are commuting subalgebras of  $\AAA\otimes \BB$ under the inclusions \begin{align*}
    \AAA \hookrightarrow  \AAA \otimes \BB \quad \qquad & \BB \hookrightarrow  \AAA\otimes \BB \\
    a \mapsto a\otimes \mathbf{1}_\BB \quad \qquad & b \mapsto \mathbf{1}_\AAA \otimes b
\end{align*}

\end{proposition}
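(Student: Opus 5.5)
The plan is to verify each clause of Definition \ref{commutingsubalgebradefn} directly from the tree formula for $\mathfrak{m}^{\otimes}_n$, using Proposition \ref{prop:18UW} together with Lemma \ref{treelemma} to control which trees survive when many inputs are units. First, the subalgebra property. For $c_i=\bara_i=a_i\otimes\ub$ we have $\mathfrak{m}^\otimes_n(\bara_1,\dots,\bara_n)=\sum_{T\in L_n}\pm T^A(a)\otimes\alpha(T)^B(\ub,\dots,\ub)$. By strict unitality a tree evaluated on all units vanishes unless it is built from $\emm_2$'s only, i.e.\ $b=n-1$ (the case $j=n$ of Lemma \ref{treelemma}). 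In the description of Proposition \ref{prop:18UW}, $|V(U)|+|V(W)|=n$ then forces $|V(U)|=1$, so $U=c_n$ and $W$ is the unique binary term of $\alpha(c_n)$ (namely $\overline{M}_n/R=\overline{M}_n$, which has no right-leaning edges). Hence the only term is $\emm^A_n(a)\otimes \overline{M}_n^B(\ub,\dots,\ub)=\emm_n^A(a)\otimes\ub$. I would then check that the sign $\epsilon$ reduces to $0$: here $|b_q|=0$ kills $\triangleleft$ and $\gamma_b$, and $\theta(c_n)$, $|E_{int}|$ and $\gamma_a$ vanish for the corolla. The case of all $c_i=\barb_i$ is symmetric, with $U=\underline{M}_n$ (left-leaning, collapsing to nothing in $R$) and $W=c_n$. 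Both cases use $\mathbf 1_{A\otimes B}=\ua\otimes\ub$.

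Next comes part (a) for mixed inputs. Suppose the inputs contain $j_a$ entries $\barb_i$, so that there are $j_a$ units on the $A$-side, and $j_b=n-j_a$ entries $\bara_i$, giving $j_b$ units on the $B$-side, with both numbers positive. A nonzero term $U^A\otimes W^B$ requires, by Lemma \ref{treelemma} (for $U\in L_n$ and $W\in R_n$), that $b(U)\ge j_a$ and $b(W)\ge j_b$. Hence $|V(U)|+|V(W)|\ge b(U)+b(W)\ge n$, and equality forces every vertex of $U$ and of $W$ to be binary. So $U$ and $W$ are binary with $n-1+n-1=2n-2$ vertices in total. Combined with $|V(U)|+|V(W)|=n$, this gives $n=2$. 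Thus only $\mu_2$ survives. For $\mu_2(\bara,\barb)=\pm a\otimes b$ and $\mu_2(\barb,\bara)=\pm a\otimes b$, I would compute the two signs from $\triangleleft$ and unitality signs ($\emm_2(a,\ua)=(-1)^{|a|}a$) and verify \eqref{eq:m2comsub}. The case $n\ge 3$ is handled by the count above, and the case $n=2$ uses the explicit $\emm_2^\otimes=\emm_2^A\otimes\emm_2^B$ with sign $\triangleleft$.

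Part (b) follows the same pattern but with the extra input $c=K(a\otimes b)=\pm a\otimes b$, which carries nonunit entries on both sides. Let $k$ of the other inputs be of the form $\bara_i$ or $\barb_i$. If both types occur, the counting argument is the same but has one fewer unit on each side, giving $|V(U)|+|V(W)|\ge n-1$ with near-equality. I will need to examine the slack case carefully to show the surviving trees vanish: one $W$ vertex must be non-binary yet involve only units, which is killed by unitality. If all $c_i=\bara_i$, then $W^B$ has exactly one nonunit input, $b$, and is nonzero only if $W$ is a binary tree of $\emm_2$'s acting trivially. That makes $|V(W)|=n-1$ and $|V(U)|=1$, so $U=c_{k+1}$ and we get $\emm^A_{k+1}(a_1,\dots,a,\dots,a_k)\otimes b$. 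I would then use that the unique such $W$ in $\alpha(c_{k+1})$ is $\overline{M}$, and that $\overline{M}^B(\ub,\dots,b,\dots,\ub)=\pm b$. The $B$-case is mirror-symmetric, with $U=\underline{M}$ and $W=c_{k+1}$.

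The main obstacle is the signs: matching $\epsilon=\theta(T)+\triangleleft+|a||E_{int}|+\gamma_a+\gamma_b$, the unit-sign conventions in $\overline{M}^B(\ub,\dots,b,\dots,\ub)$, and the sign in $K$ against the prescribed signs \eqref{eq:mkcomsuba} and \eqref{eq:mkcomsubb}. I would do this by first computing $\theta$ for the corolla and for the left- and right-combs, then tracking the Koszul sign of moving $b$ past the $a_j$ with $j>i$, which should produce exactly $|b|\sum_{j>i}|a_j|'$.
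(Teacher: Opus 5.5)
Your proposal is correct and follows the paper's proof essentially step for step. Proposition \ref{prop:18UW} together with the bounds $b(U)\ge j_a$, $b(W)\ge j_b$ from Lemma \ref{treelemma} gives the subalgebra property, forces $n=2$ in (a), and in (b) leaves only configurations with a single non-binary vertex.

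You leave that slack case vague, but it closes at once as in the paper. One of $U,W$ must then be binary with all $n-1$ vertices, so the other is the corolla $c_n$. Strict unitality kills that corolla as soon as it receives a unit (not because it sees only units), so all $c_i$ must be of a single type. When $k=0$ both types contribute, giving $\emm_1^\otimes(a\otimes b)$.
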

\begin{proof}
   First, we need to check that $\AAA$ is a subalgebra. That is, for each $n>0$ and $a_1, \ldots , a_n \in \AAA,$ we need to show that $\mathfrak{m}_n^\otimes (a_1 \otimes \mathbf{1}_\BB, \ldots ,a_n \otimes \mathbf{1}_\BB ) = \mathfrak{m}_n^\AAA (a_1, \ldots , a_n) \otimes \ub.$ \\
By Proposition \ref{prop:18UW}, 
\[
\mathfrak{m}_n^\otimes (a_1 \otimes \mathbf{1}_\BB, \ldots ,a_n \otimes \mathbf{1}_\BB ) = \sum_{\substack{U \in L_n, \ W \in R_n\\ |V(U)| + |V(W)| = n\\U_{max}\leq W_{min}}} \pm \ U^A(a_1, \ldots, a_n) \otimes W^B(\mathbf{1}_\BB, \ldots , \mathbf{1}_\BB)
\]
The condition $|V(U)| + |V(W)| = n$ implies $b(U) + b(W) \leq n$. Now, $j_b=n$ and so by Lemma \ref{treelemma}, $W^B(\mathbf{1}_\BB, \ldots , \mathbf{1}_\BB) \neq 0$ would mean  $b(W)\geq j_b$ unless $j_b = n,$ which further means $b(W)=n-1$, that is, $W$ is a binary tree, and $|V(W)|=n-1$. Substituting $|V(W)|=n-1$ we have, $|V(U)|+ n-1=n$ or $|V(U)|=1$, hence $U$ is an $n-$corolla. Thus we have \[
\mathfrak{m}_n^\otimes (a_1 \otimes \mathbf{1}_\BB, \ldots ,a_n \otimes \mathbf{1}_\BB ) = \mathfrak{m}_n^\AAA (a_1, \ldots , a_n) \otimes \mathbf{1}_\BB.
\]
Similarly, \[
\mathfrak{m}_n^\otimes (\mathbf{1}_\AAA \otimes b_1, \ldots ,\mathbf{1}_A \otimes b_n ) = \mathbf{1}_\AAA \otimes \mathfrak{m}_n^\BB (b_1, \ldots , b_n).
\]
Next we check the commuting relations in Definition \ref{commutingsubalgebradefn}. Consider $c, c_1, \ldots , c_n \in \AAA\otimes_\infty \BB$ such that for all $i,$ either $c_i=a_i \otimes \mathbf{1}_\BB$ or $c_i=\mathbf{1}_\AAA\otimes b_i$ and $c=a\otimes b$. By Proposition \ref{prop:18UW}, we have

$$
\begin{gathered}
\mathfrak{m}_n^{\otimes}\left(c_1, \ldots, c_n\right)=\sum_{\substack{U \in L_n, \ W \in R_n\\ |V(U)| + |V(W)| = n\\ U_{max}\leq W_{min}}} \pm \ U^A\left(a_1, \ldots, a_n\right) \otimes W^B\left(b_1, \ldots, b_n\right) \\
\end{gathered}
$$
where $a_i=\ua$ for some $i$, and $b_i=\ub$ for some $i$. Clearly, $\quad 0<j_a, j_b<n$ and $j_a+j_b=n$.
By the same argument, $|V(U)|+|V(W)|=n$ or $b(U)+b(W) \leq n$. Suppose $U^A\left(a_1, \ldots, a_n\right) \neq 0$, then by Lemma \ref{treelemma}, $b(U) \geq j_a$. Similarly, $b(W) \geq j_b$.

Now, $\emm_n^{\otimes}\left(c_1, \ldots, c_n\right) \neq 0 $ implies $ n \geq b(U)+b(W) \geqslant j_a+j_b=n$, a contradiction except when $b(U)+b(W)=n$.
But
$$
\begin{aligned}
|V(U)|+|V(W)| & =b(U)+n b(U)+b(W)+n b(W)=n \\
& \Rightarrow n b(U)=n b(W)=0 \\
& \Rightarrow|V(U)|=b(U) \text { and }|V(W)|=b(W)
\end{aligned}
$$
That is, $U$ and $W$ are binary trees with $|V(U)|=n-1=|V(W)|$, where $nb(U)$ means non-binary vertices of $U$. Hence, $|V(U)|+|V(W)|=2 n-2$ contradicting the above, except when $n=2$, in which case we have $c_1=a_1 \otimes \mathbf{1}_\BB$, $c_2=\mathbf{1}_\AAA \otimes b_2$ and
$$
\emm_2^{\otimes}\left(c_1, c_2\right)=
\emm_2^{\otimes}\left(a_1 \otimes \mathbf{1}_\BB, \mathbf{1}_\AAA \otimes b_2\right)=\emm_2^\AAA\left(a_1, \mathbf{1}_\AAA\right) \otimes \emm_2^\BB\left(\mathbf{1}_\BB, b_2\right)=(-1)^{|a_1|} a_1 \otimes b_2
$$
Similarly, $\quad \emm_2^{\otimes}\left(c_2, c_1\right)=\emm_2^{\otimes}\left(\ua \otimes b_2,\left.a_1 \otimes\right.\ub\right)=(-1)^{\left|a_1\right|\left|b_2\right|+\left|b_2\right|} a_1 \otimes b_2$.
Therefore, we conclude
$$
\emm_2^{\otimes}\left(c_1, c_2\right)+(-1)^{\left|c_1\right|^{\prime}\left|c_2\right|^{\prime}} \emm_2^{\otimes}\left(c_2, c_1\right)=0
$$
Finally we compute
$$
\emm_{n+1}^{\otimes}\left(c_1, \ldots, c_i, c, c_{i+1}, \ldots, c_n\right)=\sum_{\substack{|V(U)| + |V(W)| = n+1\\U_{max}\leq W_{min}}} \pm \ U^A \otimes W^B
$$
Clearly, $|V(U)|+|V(W)|=n+1$ implies $b(U)+b(W) \leq n+1$, with $j_a+j_b=n$.
By Lemma \ref{treelemma} above, nontrivial contributions come from when $b(U) \geq j_a$ and $b(W) \geq j_b$. Therefore we have
$$
n=j_a+j_b \leq b(U)+b(W) \leq n+1
$$
giving two possibilities: $U$ is binary and $W$ has one internal vertex, or vice versa.
When $n=0$, both cases contribute and we have:
$$
\emm_1^{\otimes}(a \otimes b)=\emm_1^\AAA(a) \otimes b+(-1)^{|a|} a \otimes \emm_1^\BB(b)
$$
When $n \geq 1$, since $\AAA, \BB$ are unital, the nontrivial contributions come from when all $c_i=\ua \otimes b_i$ or $c_i=a \otimes \ub$. In the first case we have
$$
\emm_{n+1}^{\otimes}\left(c_1, \cdots, c, \ldots, c_n\right)=(-1)^{|a|(1+ \sum_{j\leq i}\left|b_j\right|^{\prime})} a\otimes \emm_{n+1}^\BB\left(b_1, \ldots, b_i, b, b_{i+1}, \ldots, b_n\right) 
$$
and in the second case we have
$$
\emm_{n+1}^{\otimes}\left(c_1, \cdots, c, \ldots, c_n\right)=(-1)^{|b| \sum_{i+1 \leq j} \left|a_j\right|^{\prime}} \emm_{n+1}^\AAA\left(a_1, \ldots, a_i, a, a_{i+1}, \ldots, a_n\right) \otimes b
$$
as required.
\end{proof}

The commuting subalgebras relation will be enough to show that the K\"unneth map on Hochschild complexes (to be defined in the next section) is a chain map. Our first main result (Theorem \ref{thm:gencommutingsub}) is an extension of the commuting subalgebras relation. It will turn out to be the key in showing that the K\"unneth map respects Mukai pairings (and hence, the higher residue pairings on negative cyclic homology.)
\begin{theorem}
    \label{thm:gencommutingsub}
Let $c_i=a_i\otimes \ub=\bara_i$ or $c_i=\ua\otimes b_i=\barb_i$ for each $i$ and 
let $u_\AAA \otimes u_\BB, \ v_\AAA \otimes v_\BB$ be elements of $\AAA\otimes_{\infty} \BB$. Then for $n \geq 2$, 
\begin{align}
        \emm_{n+2}^\otimes \left(c_1,...,c_i, u_\AAA \otimes u_\BB, c_{i+1},...,c_{j}, v_\AAA \otimes v_\BB, c_{j+1},...,c_{n}\right) = 0 \notag
    \end{align}
except when 
\begin{itemize}
    \item  $c_1,...,c_i = \bara_1,...,\bara_p,\barb_1,...,\barb_q=:\bara^{(1)},\barb^{(1)}$ \\
    \item $c_{i+1}, ..., c_j=\barb_{q+1},...,\barb_r,\bara_{p+1},...,\bara_s=:\barb^{(2)},\bara^{(2)}$ \\
    \item  $c_{j+1},...,c_n=\bara_{s+1},...,\bara_t,\barb_{r+1},...,\barb_w=:\bara^{(3)},\barb^{(3)}$ 
\end{itemize}
in which case we have:
\begin{align}
     \emm_{n+2}^\otimes &\left(\bara^{(1)}, \barb^{(1)}, u_\AAA \otimes u_\BB, \barb^{(2)}, \bara^{(2)}, v_\AAA \otimes v_\BB, \bara^{(3)}, \barb^{(3)}\right) = \notag \\
     & (-1)^{ \epsilon}\ U^\AAA \left(a^{(1)}, u_\AAA, a^{(2)}, v_\AAA, a^{(3)}\right) \otimes W^\BB \left(b^{(1)}, u_\BB, b^{(2)}, v_\BB, b^{(3)}\right) \label{eq:com2subalg}
\end{align}
with the tree given in Figure \ref{fig:UW}
and $\epsilon$ is 
explicitly computed in equation (\ref{eqsignofmytree}).
\end{theorem}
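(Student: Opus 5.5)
The plan is to mimic the counting argument in the proof of Proposition \ref{prop18}, now with two "generic" inputs $u_\AAA\otimes u_\BB$ and $v_\AAA\otimes v_\BB$ instead of one. By Proposition \ref{prop:18UW} we write
\[
\emm_{n+2}^\otimes(c_1,\ldots,c_i,u_\AAA\otimes u_\BB,\ldots,v_\AAA\otimes v_\BB,\ldots,c_n)=\sum \pm\, U^\AAA(\ldots)\otimes W^\BB(\ldots),
\]
summed over $U\in L_{n+2}$ and $W$ a term of $\alpha(U)\in R_{n+2}$, with $|V(U)|+|V(W)|=n+2$ and $U_{\max}\le W_{\min}$. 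Here the $\AAA$-inputs are $a_i$ or $\ua$ according to whether $c_i=\bara_i$ or $\barb_i$, and similarly for the $\BB$-inputs. Let $j_a$ and $j_b$ be the numbers of units among the $\AAA$- and $\BB$-inputs; then $j_a+j_b=n$, and for $n\ge2$ neither $j_a$ nor $j_b$ equals $n+2$. Lemma \ref{treelemma} then gives $b(U)\ge j_a$ and $b(W)\ge j_b$ for any nonvanishing term. Hence $n\le b(U)+b(W)\le |V(U)|+|V(W)|=n+2$, so that $nb(U)+nb(W)\le 2$ and $b(U)+b(W)\in\{n,n+1,n+2\}$.

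Next I analyze the placement of the units. By strict unitality, a unit input at a binary vertex must be paired through $\emm_2$ with a sibling subtree, and unit inputs cannot enter a non-binary vertex at all. I would refine Lemma \ref{treelemma} into a statement saying that each unit $\ua$ in $U$ "consumes" exactly one binary vertex, adjacent to the leaf carrying it. The remaining non-unit inputs $a_i$, $u_\AAA$, $v_\AAA$ must then be combined by the leftover vertices, and there are at most two of these beyond those consumed. In particular, after removing the unit-consumed binary vertices, the pruned tree $U'$ has $n_a+2$ leaves and at most $2$ vertices, where $n_a$ is the number of genuine $a$-inputs; the same holds for $W'$ with the $b$'s.

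The equality $|V(U)|+|V(W)|=n+2$ together with the ordering condition $U_{\max}\le W_{\min}$ (so $U\in L$ avoids the last leaf and $W\in R$ avoids the first leaf) is what should force the shape. Inside each of the three segments determined by $u$ and $v$, the $\bara$'s and $\barb$'s must be sorted: in the first segment the $\bara$'s come first and then the $\barb$'s, in the middle segment the $\barb$'s come first and then the $\bara$'s, and in the last segment the $\bara$'s come first and then the $\barb$'s. Any other interleaving would force a unit to sit at a binary vertex whose other input is a nonunit on the "wrong" side. That is incompatible with $U$ being left-grafted and $W$ right-grafted, or it would produce a term $\emm_2(\ua,\ldots)$ configuration that is not in $L_{n+2}$ or $R_{n+2}$. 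The resulting unique pair $(U,W)$ is the one in Figure \ref{fig:UW}: $U$ is a vertex taking $a^{(1)},u_\AAA,a^{(2)},v_\AAA,a^{(3)}$ (suitably grafted), decorated by binary unit vertices, and $W$ is the analogue on the $\BB$ side. Finally, the sign $\epsilon$ is read off by evaluating $\theta(U)$, $\triangleleft$, $\gamma_a$, $\gamma_b$ and $|a||E_{int}(U)|$ on this specific tree, together with the unit signs $\emm_2(a,\ua)=(-1)^{|a|}a$.

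I expect the main obstacle to be the combinatorial step that rules out all other interleavings and identifies $U$ and $W$ uniquely. This requires a careful description of which elements of $L_{n+2}$ survive unit insertions, and of how $\alpha(U)$ and the condition $U_{\max}\le W_{\min}$ restrict $W$. The approach I would try first is induction on $n$: removing an outermost unit leaf collapses a binary vertex in both trees and reduces the problem to the case $n-1$. The base case $n=2$ would be checked directly from the formulas for $\emm_4^\otimes$ in Figure \ref{figm3m4}.
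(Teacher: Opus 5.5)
Your first step is the paper's: Proposition \ref{prop:18UW}, $j_a+j_b=n$, and Lemma \ref{treelemma} give $n\le b(U)+b(W)\le n+2$. Your pruning count is a tidy replacement for the paper's split into cases (1)--(3). Pruning removes one binary vertex per unit, so $|V(U')|+|V(W')|=2$. Each pruned tree has at least two leaves, so $U'$ and $W'$ are each a single corolla, which is sharper than your ``at most $2$ vertices''. But this only says each tree is one main vertex decorated by unit-absorbing binary vertices. Where those binary vertices sit is the entire content of the theorem, and there your argument has a genuine gap.

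\textbf{The mechanism you propose is false.} You claim a wrong interleaving is incompatible with $U\in L_{n+2}$ and $W\in R_{n+2}$. Take $n=2$ and inputs $(u_\AAA\otimes u_\BB,\bara_1,\barb_2,v_\AAA\otimes v_\BB)$, whose middle segment is in the forbidden order. The tree $c_3\circ_2 c_2$ lies in both $L_4$ and $R_4$. The candidates $U^\AAA=\emm_3(u_\AAA,\emm_2(a_1,\ua),v_\AAA)$ and $W^\BB=\emm_3(u_\BB,\emm_2(\ub,b_2),v_\BB)$ have every unit on a binary vertex, and $|V(U)|+|V(W)|=4$. This candidate dies only because $W$ is not a term of $\alpha(U)$. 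Indeed $U_{\max}\le W_{\min}$ fails whenever $U=W$ is non-binary, since $U_{\min}<U_{\max}$. Consistently, in Figure \ref{figm3m4} the $\AAA$-tree $c_3\circ_2c_2$ is paired only with $\emm_2(1,\emm_3(2,3,4))$.

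\textbf{What the proof actually needs.} It must use how $W$ is manufactured from $U$: resolve to $U_{\max}$, pass to a binary $S\ge U_{\max}$, and collapse $R(S)$. The paper does this. If some $a_i$ or $v_\AAA$ is not on the non-binary vertex of $U$, resolving $U$ creates a right-leaning edge. That edge is collapsed in $W$, which puts a $\ub$ on a non-binary vertex of $W$; this is its analysis of subcases (i)--(v). Dually, the $b_j$ and $u_\BB$ must lie on the non-binary vertex of $W$, while $u_\AAA$ and $v_\BB$ are free. This asymmetry between $u$ and $v$ is what forces the three different segment orderings $(\bara^{(1)},\barb^{(1)})$, $(\barb^{(2)},\bara^{(2)})$, $(\bara^{(3)},\barb^{(3)})$. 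Nothing in your sketch distinguishes the roles of $u$ and $v$.

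\textbf{The fallback induction does not close the gap.}
\begin{itemize}
\item Deleting a leaf that carries $\ua$ collapses a binary vertex of $U$ only. In $W$ the same leaf carries a genuine $b_i$ sitting on the main vertex, so ``collapses a binary vertex in both trees'' is incorrect.
\item You would therefore first need an unstated lemma that $W\in\alpha(U)$ is preserved and reflected under such deletions.
\item Even then, vanishing is not inherited. Deleting $\barb_1$ from the forbidden $(\barb_1,\bara_2,\bara_3,u_\AAA\otimes u_\BB,v_\AAA\otimes v_\BB)$ gives the allowed $(\bara_2,\bara_3,u_\AAA\otimes u_\BB,v_\AAA\otimes v_\BB)$. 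So the induction hypothesis gives no reason for the larger expression to vanish; that vanishing has to be proved directly at each size.
\end{itemize}
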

\begin{proof}
By Proposition \ref{prop:18UW}, 
$$
\begin{gathered}
\mathfrak{m}_{n+2}^{\otimes}\left( -\otimes - \right) =\sum_{\substack{U \in L_{n+2}, \ W \in R_{n+2}\\ |V(U)| + |V(W)| = n+2\\ U_{\max} \leq W_{\min}}} \pm \ U^A\left(-\right) \otimes W^B\left(-\right) \\
\end{gathered}
$$

The condition $|V(U)| + |V(W)| = n+2$ implies $b(U)+ b(W) \leq n+2$. Clearly, $j_a + j_b = n$, where $j_a, j_b \geq 0$. By Lemma \ref{treelemma}, if $U^A(\cdots) \neq 0$, then $b(U)\geq j_a$ (unless $j_a=n+2$, which is impossible). So we have $b(U)\geq j_a$, and similarly, $b(W) \geq j_b$. Therefore, we have \begin{equation}
    n=j_a +j_b \leq b(U) + b(W) \leq n+2
\end{equation} However, the condition $|V(U)| + |V(W)| = n+2$ can be rewritten as $b(U) + nb(U)+ b(W) + nb(W) = n+2$. Together with the above inequality yields the following three cases:
\begin{enumerate}
    \item $b(U) + b(W) = n+2$; and $nb(U) + nb(W) = 0$
    \item $b(U) + b(W) = n+1$; and $nb(U) + nb(W) = 1$
    \item $b(U) + b(W) = n$; and $nb(U) + nb(W) = 2$
\end{enumerate}

In case (1): the equation $nb(U) + nb(W) = 0$ implies $nb(U) = nb(W) = 0$, or $U$ and $W$ have no non-binary vertices. Hence they are binary trees, and so $b(U) = b(W) = n+1$. We obtain \[
n+2 = |V(U)| + |V(W)| = b(U) + b(W) = n+1+n+1 =2n+2
\]
if and only if $n=0$. This corresponds to $\emm_2^\otimes(u_\AAA \otimes u_\BB, v_\AAA \otimes v_\BB) = (-1)^{|v_\AAA|'|u_\BB|'} \emm_2^\AAA(u_\AAA, v_\AAA) \otimes \emm_2^\BB(u_\BB, v_\BB)$, as shown in Figure \ref{figCase1} below.
    \begin{figure}[h]
		\setlength{\unitlength}{1pt}
		\begin{picture}(140,50)(-175,10)
		
		\linethickness{0.3mm}
		\put(30,40){\line(0,1){10}}
		\linethickness{0.3mm}
		\put(0,10){\line(1,1){30}}
		\linethickness{0.3mm}
		\put(60,10){\line(-1,1){30}}
		
		\linethickness{0.3mm}
		\put(110,40){\line(0,1){10}}
		\linethickness{0.3mm}
		\put(80,10){\line(1,1){30}}
		\linethickness{0.3mm}
		\put(140,10){\line(-1,1){30}}

		\put(70,30){\makebox(0,0)[cc]{$\otimes$}}
		\put(0,40){\makebox(0,0)[cc]{$U =$}}
		\put(90,40){\makebox(0,0)[cc]{$W =$}}
		\end{picture}
        \caption{Case 1: $U$ and $W$ have only binary vertices}
         \label{figCase1}
\end{figure}

In case (2): the equation $nb(U) + nb(W) = 1$ implies $nb(U) =0$ and $nb(W) = 1$, or $nb(U) =1$ and $nb(W) = 0$. Suppose $nb(U) =0$ and $nb(W) = 1$. Then $U$ is  a binary tree with $b(U) = n+1$, so we have $n+1 + |V(W)| = n+2$ or $|V(W)| = 1$. Hence, $W$ is the $n+2$-corolla, $n\geq 1$. The  corresponding operation is only non-zero when $c_i = \barb_i$ for all $i$. Rewrite $\barb^{(1)}=c_1,...,c_i$; $\barb^{(2)}=c_{i+1},...,c_j$; and $\barb^{(3)}=c_{j+1},...,c_n$, then this case becomes
\begin{equation}
    \emm_{n+2}^\otimes(\barb^{(1)}, u_\AAA\otimes u_\BB, \barb^{(2)}, v_\AAA\otimes v_\BB, \barb^{(3)}) = (-1)^{\epsilon} \emm_2^\AAA(u_\AAA, v_\AAA) \otimes \emm_{n+2}^\BB(b^{(1)}, u_\BB,b^{(2)}, v_\BB, b^{(3)})
\end{equation} 
as shown in Figure \ref{figCase2a}.

\begin{figure}[h]
	\setlength{\unitlength}{1.5pt}
	\begin{picture}(140,50)(-100,10)
	
	\linethickness{0.3mm}
	\put(30,40){\line(0,1){10}}
	\linethickness{0.3mm}
	\put(0,10){\line(1,1){30}}
	\linethickness{0.3mm}
	\put(52,10){\line(-1,1){26}}
	\linethickness{0.3mm}
	\put(44,10){\line(-1,1){22}}
	\linethickness{0.3mm}
	\put(24,10){\line(-1,1){12}}
	\linethickness{0.3mm}
	\put(16,10){\line(-1,1){8}}
	\linethickness{0.3mm}
	\put(10,10){\line(-1,1){5}}
	\linethickness{0.3mm}
	\put(60,10){\line(-1,1){30}}
	
	\linethickness{0.3mm}
	\put(110,40){\line(0,1){10}}
	\linethickness{0.3mm}
	\put(80,10){\line(1,1){30}}
	\linethickness{0.3mm}
	\put(95,10){\line(1,2){15}}
	\linethickness{0.3mm}
	\put(125,10){\line(-1,2){15}}
	\linethickness{0.3mm}
	\put(100,10){\line(1,3){10}}
	\linethickness{0.3mm}
	\put(120,10){\line(-1,3){10}}
	\linethickness{0.3mm}
	\put(140,10){\line(-1,1){30}}

	\put(70,30){\makebox(0,0)[cc]{$\otimes$}}
	\put(0,40){\makebox(0,0)[cc]{$U =$}}
	\put(90,40){\makebox(0,0)[cc]{$W=$}}
	\put(28,20){\makebox(0,0)[cc]{$. $}}
	\put(25,18){\makebox(0,0)[cc]{$. $}}
	\put(22,16){\makebox(0,0)[cc]{$. $}}
	\put(110,20){\makebox(0,0)[cc]{$\cdots$}}
	\end{picture}
    \caption{Case 2a: $U$ has only binary vertices and $W$ is a corolla}
    \label{figCase2a}
\end{figure}

Similarly, if $nb(U) =1$ and $nb(W) = 0$, then $W$ is a binary tree and $U$ is the $n+2$-corolla. This similarly corresponds to $c_i = \bara_{i}$ for all $i$, that is, \begin{equation}
    \emm_{n+2}^\otimes(\bara^{(1)}, u_\AAA\otimes u_\BB, \bara^{(2)}, v_\AAA\otimes v_\BB, \bara^{(3)}) = (-1)^{\epsilon} \emm_{n+2}^\AAA(a^{(1)}, u_\AAA, a^{(2)}, v_\AAA, a^{(3)}) \otimes \emm_2^\BB(u_\BB, v_\BB) \end{equation} 
as shown in Figure \ref{figCase2b}.

\begin{figure}[h]
	\setlength{\unitlength}{1.5pt}
	\begin{picture}(140,50)(-100,10)
	
	\linethickness{0.3mm}
	\put(30,40){\line(0,1){10}}
	\linethickness{0.3mm}
	\put(0,10){\line(1,1){30}} 
	\linethickness{0.3mm}
	\put(15,10){\line(1,2){15}}
	\linethickness{0.3mm}
	\put(45,10){\line(-1,2){15}}
	\linethickness{0.3mm}
	\put(20,10){\line(1,3){10}}
	\linethickness{0.3mm}
	\put(40,10){\line(-1,3){10}}
	
	\linethickness{0.3mm}
	\put(60,10){\line(-1,1){30}}
	
	\linethickness{0.3mm}
	\put(110,40){\line(0,1){10}}
	\linethickness{0.3mm}
	\put(80,10){\line(1,1){30}} 
	\linethickness{0.3mm}
	\put(88,10){\line(1,1){26}}
	\linethickness{0.3mm}
	\put(96,10){\line(1,1){22}}
	\linethickness{0.3mm}
	\put(116,10){\line(1,1){12}}
	\linethickness{0.3mm}
	\put(122,10){\line(1,1){9}}
	\linethickness{0.3mm}
	\put(130,10){\line(1,1){5}}
	
	\linethickness{0.3mm}
	\put(140,10){\line(-1,1){30}}

	\put(70,30){\makebox(0,0)[cc]{$\otimes$}}
	\put(0,40){\makebox(0,0)[cc]{$U =$}}
	\put(90,40){\makebox(0,0)[cc]{$W =$}}
	\put(30,20){\makebox(0,0)[cc]{$\cdots$}}
	\put(110,20){\makebox(0,0)[cc]{$. $}}
	\put(113,18){\makebox(0,0)[cc]{$. $}}
	\put(116,16){\makebox(0,0)[cc]{$. $}}
	\end{picture}
    \caption{Case 2b: $U$ is the corolla, and $W$ has only binary vertices}
    \label{figCase2b}
\end{figure}

In case (3): the equation $nb(U) + nb(W) = 2$ further yields the following subcases: (a) $nb(U)=2$ and  $nb(W) = 0$; (b) $nb(U)=0$ and $nb(W) = 2$; (c) $nb(U) = nb(W) = 1$. In subcase (a): $nb(W) = 0$ implies $W$ is a binary tree and so $b(W) = n+1 = |V(W)|$. On the other hand, $U$ has 2 non-binary vertices. We therefore obtain $|V(U)| = 1$. But $|V(U)|= b(U) + nb(U)$ which forces $b(U)=-1$, a contradiction. A similar result also follows from subcase (b).

Finally we consider the case $nb(U)=1=nb(W)$. By Lemma \ref{treelemma}, $U^A(\cdots) = 0$ unless $b(U) \geq j_a$ and similarly, $W^B(\cdots) = 0$ unless $b(W) \geq j_b$, where $j_a, j_b \geq 0$. Further, from $|V(U)| + |V(W)| = n+2$, we obtain $b(U) + b(W) =n$, or \begin{equation}
    n = j_a + j_b \leq b(U) + b(W) = n
\end{equation}
which implies $j_a + j_b = b(U) + b(W)$ or $b(U) = j_a$ and $b(W) = j_b$, that is, on $U$ the number of binary vertices corresponds to $j_a$ (the number of $b$'s) and on $W$ the number of binary vertices corresponds to $j_b$ (the number of $a$'s). 

Note that if $j_a=0, \ (j_b>0)$ then $U$ is the $n+2$-corolla but $W$ will have only binary vertices, which is fine, except that it does not belong to this case since $nb(W)=0$. A similar thing happens when $j_b=0, \ (j_a>0)$. If $j_a=j_b=0$, we are back to Case 1. So we consider $j_a,j_b>0.$

On $U$, the $a$'s (coming from $\bara$'s) must be on leaves incident with the unique non-binary vertex since otherwise, the right-leaning internal edges created are collapsed in $W$ forcing the corresponding $1$'s to be inserted on a non-binary vertex on $W$, which is zero by unitality. Similarly, on $W$, the $b$'s (coming from $\barb$'s) must be inserted on the unique non-binary vertex.

Again, on $U$, we must have $v_\AAA$ inserted on the unique non-binary vertex. Otherwise, we have the following cases shown in Figure \ref{fig:fig1allcases}.\\

\begin{figure}[h]
\centering
    \scalebox{0.9}{\begin{tikzpicture}[
    every node/.style={font=\textnormal{}},
    T/.style={
        draw,
        ellipse,
        minimum width=2.0cm,
        minimum height=1.15cm
    },
    S/.style={
        draw,
        ellipse,
        minimum width=1.45cm,
        minimum height=0.9cm
    },
    branch/.style={draw, line width=0.7pt},
    e/.style={font=\textnormal{}}
]


\begin{scope}[xshift=0cm]

    \node[T] (T1) at (0,2.8) {$T$};

    \coordinate (v1) at (0,1.65);
    \draw[branch] (T1.south) -- (v1);

    \node[e] at (-0.18,1.9) {$e$};

    \node (c21) at (-0.85,0.75) {$v_A$};
    \node (one1) at (0.85,0.75) {$1$};

    \draw[branch] (v1) -- (c21.north);
    \draw[branch] (v1) -- (one1.north);

    \node at (0,-0.25) {\textnormal{(i)}};

\end{scope}


\begin{scope}[xshift=3.7cm]

    \node[T] (T2) at (0,2.8) {$T$};

    \coordinate (v2) at (0,1.65);
    \draw[branch] (T2.south) -- (v2);

    \node[e] at (-0.18,1.9) {$e$};

    \node (one2) at (-0.85,0.75) {$1$};
    \node (c22) at (0.85,0.75) {$v_A$};

    \draw[branch] (v2) -- (one2.north);
    \draw[branch] (v2) -- (c22.north);

    \node at (0,-0.25) {\textnormal{(ii)}};

\end{scope}


\begin{scope}[xshift=7.4cm]

    \node[T] (T3) at (0,2.8) {$T$};

    \coordinate (v3) at (0,1.65);
    \draw[branch] (T3.south) -- (v3);

    \node[e] at (-0.18,1.9) {$e$};

    \node (c13) at (-0.85,0.75) {$u_A$};
    \node (c23) at (0.85,0.75) {$v_A$};

    \draw[branch] (v3) -- (c13.north);
    \draw[branch] (v3) -- (c23.north);

    \node at (0,-0.25) {\textnormal{(iii)}};

\end{scope}


\begin{scope}[xshift=11.1cm]

    \node[T] (T4) at (0,2.8) {$T$};

    \coordinate (v4) at (0,1.65);
    \draw[branch] (T4.south) -- (v4);

    \node[e] at (-0.18,1.9) {$e$};

    \node[S] (S4) at (-0.85,0.75) {$S$};
    \node (c24) at (0.85,0.75) {$v_A$};

    \draw[branch] (v4) -- (S4.north);
    \draw[branch] (v4) -- (c24.north);

    \node at (0,-0.25) {\textnormal{(iv)}};

\end{scope}


\begin{scope}[xshift=14.8cm]

    \node[T] (T5) at (0,2.8) {$T$};

    \coordinate (v5) at (0,1.65);
    \draw[branch] (T5.south) -- (v5);

    \node[e] at (-0.18,1.9) {$e$};

    \node (c25) at (-0.85,0.75) {$v_A$};
    \node[S] (S5) at (0.85,0.75) {$S$};

    \draw[branch] (v5) -- (c25.north);
    \draw[branch] (v5) -- (S5.north);

    \node at (0,-0.25) {\textnormal{(v)}};

\end{scope}

\end{tikzpicture}} 
    \caption{All 5 subcases: (i) and (ii) shows $v_a$ sharing a binary vertex with $1$ to the right, and left of $v_A$ respectively; (iii) shows $v_A$ sharing a binary vertex with $u_A$; (iv) and (v) shows $v_A$ sharing a binary vertex with a subtree $S$ to the left, and right of $v_A$ respectively.}
    \label{fig:fig1allcases}
\end{figure}

In case (i) since there are certainly leaves to the left of $v_\AAA$, (namely, $u_\AAA$) the edge $e$ could be the last leaf and we would have attached corollas on it leading to a contradiction. Otherwise, $e$ must eventually (up to finitely many binary attachments) join the unique non-binary vertex with $a$ attached to the right as shown in Figure \ref{fig:case1}.

\begin{figure}[h]
    \centering
    \scalebox{0.75}{\begin{tikzpicture}[
    every node/.style={font=\large},
    ellipse_node/.style={
        draw,
        ellipse,
        minimum width=2.4cm,
        minimum height=1.1cm,
        line width=0.7pt
    },
    S_node/.style={
        draw,
        ellipse,
        minimum width=1.5cm,
        minimum height=1.1cm,
        line width=0.7pt
    },
    branch/.style={draw, line width=0.7pt},
    dot/.style={fill, circle, inner sep=0.3pt}
]

    \node[ellipse_node] (T) at (0, 4) {$T$};

    \node[dot] (v_nb) at (0.3, 2.3) {};
    \draw[branch] (T.south) -- (v_nb);

    \node[align=left, font=\large] (annot) at (3.3, 2.5) {unique non-binary vertex};
    \draw[-{Stealth[scale=0.9]}, line width=0.6pt] (annot.west) -- (v_nb);

    \node[S_node] (S) at (-3.5, 0.7) {$S$};
    \draw[branch] (v_nb) -- (S.east);

    \node[dot] (u2) at (-0.3, 1.6) {};
    \node[dot] (u1) at (-0.9, 0.9) {};
    \node[dot] (u0) at (-1.5, 0.2) {};

    \draw[branch] (v_nb) -- (u2);
    \draw[branch] (u2) -- (u1);
    \draw[branch] (u1) -- (u0) node[midway, left=2pt] {$e$};

    \node (vA) at (-2.1, -1.0) {$v_A$};
    \node (one0) at (-1.1, -1.0) {$1$};
    \node (one1) at (-0.2, -1.0) {$1$};
    \node (one2) at (0.6, -1.0) {$1$};

    \draw[branch] (u0) -- (vA.north);
    \draw[branch] (u0) -- (one0.north);
    \draw[branch] (u1) -- (one1.north);
    \draw[branch] (u2) -- (one2.north);

    \node at (0.1, -0.2) {$\dots$};

    \node (a) at (2.4, -1.0) {$a$};

    \draw[branch] (v_nb) -- (a.north);

\end{tikzpicture}}
    \scalebox{0.75}{\begin{tikzpicture}[
    every node/.style={font=\large},
    ellipse_node/.style={
        draw,
        ellipse,
        minimum width=2.4cm,
        minimum height=1.1cm,
        line width=0.7pt
    },
    S_node/.style={
        draw,
        ellipse,
        minimum width=1.5cm,
        minimum height=1.0cm,
        line width=0.7pt
    },
    branch/.style={draw, line width=0.7pt},
    dot/.style={fill, circle, inner sep=0.3pt}
]

    \node[ellipse_node] (T) at (0, 4) {$T$};

    \node[dot] (v_nb) at (0.3, 2.3) {};
    \draw[branch] (T.south) -- (v_nb);

    \node[align=left, font=\large] (annot) at (3.3, 2.5) {unique non-binary vertex};
    \draw[-{Stealth[scale=0.9]}, line width=0.6pt] (annot.west) -- (v_nb);

    \node[S_node] (Sp) at (-3.5, 0.9) {$S'$};
    \draw[branch] (v_nb) -- (Sp.east);

    \node[dot] (u3) at (-0.3, 1.6) {};
    \node[dot] (u2) at (-0.9, 0.9) {};
    \node[dot] (u1) at (-1.5, 0.2) {};
    \node[dot] (u0) at (-2.1, -0.4) {};

    \draw[branch] (v_nb) -- (u3);
    \draw[branch] (u3) -- (u2);
    \draw[branch] (u2) -- (u1);
    \draw[branch] (u1) -- (u0)node[midway, left=2pt] {$e$}; 

    \node[S_node] (S) at (-3.4, -1.2) {$S$};
    \draw[branch] (u0) -- (S.east);

    \node (vA) at (-2.0, -1.2) {$v_A$};
    \node (one0) at (-1.1, -1.2) {$1$};
    \node (one1) at (-0.3, -1.2) {$1$};
    \node (one2) at (0.6, -1.2) {$1$};

    \draw[branch] (u0) -- (vA.north);
    \draw[branch] (u1) -- (one0.north);
    \draw[branch] (u2) -- (one1.north);
    \draw[branch] (u3) -- (one2.north);

    \node at (0.07, -0.3) {$\dots$};

    \node (a) at (2.2, -1.2) {$a$};
    \draw[branch] (v_nb) -- (a.north);

\end{tikzpicture}}
    \caption{Illustration of cases (i) and (iv) respectively showing where edge $e$ has to (up to finitely many binary attachments) join the unique non-binary vertex with $a$ attached.}
    \label{fig:case1}
\end{figure}

However, resolving this tree leads to attaching edge $e$ on the leaf containing $a$. This creates a right-leaning internal edge which is collapsed to obtain the unique non-binary vertex on tree $W$ containing the unit $1$. Hence, $0$ by unitality. A similar argument applies to case (ii).

In case (iii) you automatically create an additional binary vertex and so $b(U)>j_a$, a contradiction.

In case (iv), if edge $e$ is not the last leaf, then the unique non-binary vertex is contained in $T$ with $a$ attached to the right as in case (i) as shown in Figure \ref{fig:case1}. Once again, resolving this tree yields $W=0$ by unitality. A similar argument works for case (v).

So, on tree $U$, we must have all $a's$ and $v_\AAA$ inserted on the unique non-binary vertex. On the other hand, $1$'s must be inserted on binary vertices, while $u_\AAA$ is free in general to join a binary vertex or the non-binary vertex. Similarly on tree $W$, we must have all $b's$ and $u_\BB$ inserted on the unique non-binary vertex, while $v_\BB$ is free in general to join a binary vertex or the non-binary vertex.

We thus have a unique non-trivial arrangement, namely, $\barc_1,...,\barc_i$ must be of the ordered form $(\bara^{(1)}, \barb^{(1)})$, that is, all the $\bara$'s must come before $\barb$'s since any term of the form $\barb ,\bara$ forces one to attach corollas on the last leaf or have $1$ share a binary vertex with $a$ on $U$. Similarly, $\barc_{i+1},...,\barc_j$ must be ordered as $(\barb^{(2)}, \bara^{(2)})$; since any term of the form $\bara,\barb$ forces $1$ to share a binary vertex with $a$ or $v_\AAA$ on $U$. Similarly, we must also have $\barc_{j+1},...,\barc_n$ be of the form $(\bara^{(3)}, \barb^{(3)})$.

These restrictions guarantee the existence of unique $U \in L_{n+2}$ and $W \in R_{n+2}$ such that
\begin{align}
     \emm_*^\otimes &\left(\bara^{(1)}, \barb^{(1)}, u_\AAA \otimes u_\BB, \barb^{(2)}, \bara^{(2)}, v_\AAA \otimes v_\BB, \bara^{(3)}, \barb^{(3)}\right) = \notag \\
     & (-1)^{ \epsilon} \ U^A \left(a^{(1)}, u_\AAA, a^{(2)}, v_\AAA, a^{(3)}\right) \otimes W^B \left(b^{(1)}, u_\BB, b^{(2)}, v_\BB, b^{(3)}\right) \notag
\end{align}
where trees $U$ and $W$ are shown below.

\begin{figure}[h]
    \scalebox{.6}{
		\setlength{\unitlength}{1.5pt}
		\begin{picture}(240,150)(0,0)

		\linethickness{0.3mm}
        \put(120,130){\line(0,1){20}}

		\linethickness{0.3mm}
        \put(0,10){\line(1,1){120}}
	    
	    \linethickness{0.3mm}
	    \put(33,10){\line(2,3){65}} 
	    \linethickness{0.3mm}
	    \put(49,10){\line(1,2){50}} 
	     
	    \linethickness{0.3mm}
	    \put(60,10){\line(-1,2){5.5}}
	    \linethickness{0.3mm}
	    \put(70,10){\line(-1,2){10.5}}
	    \linethickness{0.3mm}
	    \put(83,10){\line(-1,2){17}}
	    \linethickness{0.3mm}
	    \put(93,10){\line(-1,2){22}}
	    \linethickness{0.3mm}
	    \put(103,10){\line(-1,2){27}}
	    \linethickness{0.3mm}
	    \put(123,10){\line(-1,2){37}}
	     
	    \linethickness{0.3mm}
	    \put(132,10){\line(-1,3){33}}	    
	    \linethickness{0.3mm}
     	\put(139,10){\line(-2,5){39}}
	    \linethickness{0.3mm}
	    \put(149,10){\line(-1,2){50}} 
	    
		\linethickness{0.3mm}
	    \put(166,10){\line(-2,3){67}}

		\linethickness{0.3mm}
		\put(200,10){\line(-1,1){100}}		
		\linethickness{0.3mm}
		\put(210,10){\line(-1,1){105}}		
		\linethickness{0.3mm}
		\put(220,10){\line(-1,1){110}}
		\linethickness{0.3mm}
        \put(240,10){\line(-1,1){120}}
        
        \put(0,120){\makebox(0,0)[cc]{$U =$}}

        \put(0,5){\makebox(0,0)[cc]{$a$}}
        \put(16,5){\makebox(0,0)[cc]{$\cdots$}}
        \put(33,5){\makebox(0,0)[cc]{$a$}}
        \put(49,5){\makebox(0,0)[cc]{$1$}}	
        \put(69,5){\makebox(0,0)[cc]{$\cdots$}}
        \put(83,5){\makebox(0,0)[cc]{$1$}}
        \put(93,5){\makebox(0,0)[cc]{$u_A$}}
        \put(103,5){\makebox(0,0)[cc]{$1$}}
        \put(113,5){\makebox(0,0)[cc]{$\cdots$}}
        \put(123,5){\makebox(0,0)[cc]{$1$}}
        \put(132,5){\makebox(0,0)[cc]{$a\ $}}
        \put(135,5){\makebox(0,0)[cc]{$...$}}
        \put(139,5){\makebox(0,0)[cc]{$\ a$}}
        \put(149,5){\makebox(0,0)[cc]{$v_A$}}
        \put(166,5){\makebox(0,0)[cc]{$a$}}
        \put(182,5){\makebox(0,0)[cc]{$\cdots$}}
        \put(200,5){\makebox(0,0)[cc]{$a$}}
        \put(210,5){\makebox(0,0)[cc]{$1$}}
        \put(225,5){\makebox(0,0)[cc]{$\cdots$}}
        \put(240,5){\makebox(0,0)[cc]{$1$}}
        
        \put(50,50){\makebox(0,0)[cc]{$\cdots$}}
        \put(68,20){\makebox(0,0)[cc]{$.$}}
        \put(70,22){\makebox(0,0)[cc]{$.$}}
        \put(72,24){\makebox(0,0)[cc]{$.$}}
        \put(96,40){\makebox(0,0)[cc]{$.$}}
        \put(98,42){\makebox(0,0)[cc]{$.$}}
        \put(100,44){\makebox(0,0)[cc]{$.$}}
        \put(140,60){\makebox(0,0)[cc]{$\cdots$}}
        \put(128,30){\makebox(0,0)[cc]{$...$}}
        \put(158,88){\makebox(0,0)[cc]{$.$}}
	\put(156,86){\makebox(0,0)[cc]{$.$}}
        \put(154,84){\makebox(0,0)[cc]{$.$}}		
			
\end{picture}
} \qquad \quad
    \scalebox{.6}{
		\setlength{\unitlength}{1.5pt}
		\begin{picture}(240,150)(0,0)

		\linethickness{0.3mm}
		\put(120,130){\line(0,1){20}}

		\linethickness{0.3mm}
		\put(0,10){\line(1,1){120}}
        \linethickness{0.3mm}
        \put(10,10){\line(1,1){115}}
        \linethickness{0.3mm}
        \put(25,10){\line(1,1){108}}
        \linethickness{0.3mm}
        \put(40,10){\line(1,1){100}}

         \linethickness{0.3mm}
         \put(73,10){\line(2,3){67}}
         
         \linethickness{0.3mm}
         \put(90,10){\line(1,2){50}} 
         \linethickness{0.3mm}
         \put(100,10){\line(2,5){40}}
         \linethickness{0.3mm}
         \put(107,10){\line(1,3){33}}	    

         \linethickness{0.3mm}
         \put(117,10){\line(1,2){37}}
         \linethickness{0.3mm}
         \put(137,10){\line(1,2){27}}
         \linethickness{0.3mm}
         \put(153,10){\line(1,2){19}}
         \linethickness{0.3mm}
         \put(170,10){\line(1,2){10}}
         
         \linethickness{0.3mm}
         \put(180,10){\line(1,2){5}}

        \linethickness{0.3mm}
        \put(190,10){\line(-1,2){50}}         
        \linethickness{0.3mm}
        \put(207,10){\line(-2,3){65}}

		\linethickness{0.3mm}
        \put(240,10){\line(-1,1){120}}

         \put(0,120){\makebox(0,0)[cc]{$W =$}}

         \put(0,5){\makebox(0,0)[cc]{$1$}}
         \put(13,5){\makebox(0,0)[cc]{$\cdots$}}
         \put(25,5){\makebox(0,0)[cc]{$1$}}
         \put(40,5){\makebox(0,0)[cc]{$b$}}	
         \put(52,5){\makebox(0,0)[cc]{$\cdots$}}
         \put(73,5){\makebox(0,0)[cc]{$b$}}
         \put(90,5){\makebox(0,0)[cc]{$u_B$}}
         \put(100,5){\makebox(0,0)[cc]{$b \ $}}
         \put(103,5){\makebox(0,0)[cc]{$...$}}
         \put(107,5){\makebox(0,0)[cc]{$\ b$}}
         \put(117,5){\makebox(0,0)[cc]{$1$}}
         \put(127,5){\makebox(0,0)[cc]{$\cdots$}}
         \put(137,5){\makebox(0,0)[cc]{$1$}}
         \put(153,5){\makebox(0,0)[cc]{$v_B$}}
         \put(166,5){\makebox(0,0)[cc]{$1$}}
         \put(178,5){\makebox(0,0)[cc]{$\cdots$}}
         \put(190,5){\makebox(0,0)[cc]{$1$}}
         \put(207,5){\makebox(0,0)[cc]{$b$}}
         \put(225,5){\makebox(0,0)[cc]{$\cdots$}}
         \put(240,5){\makebox(0,0)[cc]{$b$}}

         \put(77,70){\makebox(0,0)[cc]{$\cdots$}}
         \put(90,50){\makebox(0,0)[cc]{$\cdots$}}
         \put(107,20){\makebox(0,0)[cc]{$...$}}
         \put(147,50){\makebox(0,0)[cc]{$\cdots$}}
         \put(180,60){\makebox(0,0)[cc]{$\cdots$}}
         \put(176,18){\makebox(0,0)[cc]{$.$}}
         \put(178,16){\makebox(0,0)[cc]{$.$}}
         \put(180,14){\makebox(0,0)[cc]{$.$}}

\end{picture}
}
    \caption{Unique trees $U$ and $W$}
    \label{fig:UW}
\end{figure}

\vspace{-0.5cm}
where 
\vspace{-0.5cm}
\begin{align}
    \epsilon=&(\#a^{(2)}+\#a^{(3)})(\#b^{(1)}+\#b^{(2)})+\#b^{(3)}(\#a^{(1)}+\#a^{(2)}+\#a^{(3)})\notag\\
    +|v_\BB||a^{(3)}| + &(|u_\BB|+|b^{(2)}|)(|a^{(2)}|+|v_A|+|a^{(3)}|) + |b^{(1)}|(|u_A|+|a^{(2)}|+|v_A|+|a^{(3)}|) \notag\\
    +&\#b^{(3)}|a^{(1)}| + (\#b^{(1)}+\#b^{(2)}+\#b^{(3)})(|a^{(2)}|+|a^{(3)}|+|u_A|+|v_A|)\notag\\
    +(\#a^{(2)}&+\#a^{(3)})(|b^{(1)}|+|u_B|+|b^{(2)}|) + (\#a^{(3)}-1)|v_B|
    \label{eqsignofmytree}
\end{align}
and $\#a^{(i)}=m$ if $a^{(i)}=a^{(i)}_1\otimes \cdots \otimes a^{(i)}_m$, similarly defined for $\#b^{(i)}$.
\end{proof}

\section{The K\"unneth maps}
\label{section-applications}
In this section, all our $A_\infty$ algebras will be smooth and proper, strictly unital, and $\mathbb{Z}/2\mathbb{Z}$-graded as defined, unless otherwise stated. We will use the shuffle product to define a K\"unneth map between tensor product of $A_\infty$ algebras as follows.
\subsection{Shuffle and cyclic shuffle maps}
The (cyclic) shuffle product will play an important role in defining the K\"unneth maps. We introduce them here.
\begin{definition}
    Let $m, n \geq 0$ be integers. We say a permutation $\sigma \in S_{n+m}$ is a $(n, m)$-shuffle if $\sigma(i)<\sigma(j)$ for $i<j \leq n$ or $n+1 \leq i<j$.
Consider $a=a_1 \otimes \ldots \otimes a_n \in \AAA^{\otimes n}$ and $ b=b_1 \otimes \ldots \otimes b_m \in \BB^{\otimes m}$ and denote
$$
u_i= \begin{cases}a_i \otimes {\bf1}_B, &  \ 1 \leqslant i \leqslant n, \\{\bf1}_A \otimes b_{i-n}, &  n<1 \leqslant n+m .\end{cases}
$$
Given a $(n, m)$-shuffle $\sigma$, we define $
\sigma(a; b)=(-1)^{\epsilon} u_{\sigma^{-1}(1)} \otimes \ldots \otimes u_{\sigma^{-1}(n+m)} \in (A\otimes B)^{\otimes (n+m)},$ where $ \displaystyle \epsilon=\sum_{\substack{i \leqslant n<j \\ \sigma(j)<\sigma(i)}}|u_j|'|u_i|' .$

With the same notation, define the shuffle product
\begin{equation}
    \operatorname{sh}(a ; b)=\sum_\sigma \sigma(a; b) \label{shuffle_defn}
\end{equation}
where we sum over all $(n, m)$-shuffles $\sigma$. 
\end{definition}
Some properties of the shuffle product $\operatorname{sh}$ that will be useful for us are stated.

\begin{lemma}[{\parencite[Lemma 5.8]{amorim2016tensor}}]\label{lemma21}
We have the following:
$$
(a) \sum_{(\operatorname{sh}(a; b))} \operatorname{sh}(a ; b)^{(1)} \otimes \operatorname{sh}(a;b)^{(2)} =\sum_{(a)(b)}(-1)^{|a^{(2)}|' |b^{(1)}|'} \operatorname{sh}\left(a^{(1)} ; b^{(1)}\right) \otimes \operatorname{sh} \left(a^{(2)} ; b^{(2)}\right)
$$

(b) $\operatorname{sh}\left( a^{(1)}, x,  a^{(2)} ; b\right)= \sum_{(b)}(-1)^{|b^{(1)}|'(|x|' + |a^{(2)}|')} \operatorname{sh}\left( a^{(1)} ; b^{(1)}\right) \otimes x \otimes \operatorname{sh}\left( a^{(2)} ; b^{(2)}\right)$ 

(c) $\operatorname{sh}\left(a ;  b^{(1)}, y,  b^{(2)}\right)= \sum_{(a)}(-1)^{|a^{(2)}|'(|y|' + |b^{(1)}|')} \operatorname{sh}\left( a^{(1)} ; b^{(1)}\right) \otimes y \otimes \operatorname{sh}\left( a^{(2)} ; b^{(2)}\right)$ 
\end{lemma}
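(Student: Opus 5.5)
The plan is to prove all three identities with one combinatorial device: a bijection between shuffles with some extra marking and pairs of smaller shuffles. Under this bijection the sign $(-1)^{\epsilon}$ factors into the signs of the two smaller shuffles times a \emph{crossing} sign, and that crossing sign is exactly the one in the statement.

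Recall that $\epsilon$ in $\sigma(a;b)$ sums $|u_j|'|u_i|'$ over the \emph{inverted} pairs, i.e. pairs $(\bara_i,\barb_j)$ with $\barb_j$ placed before $\bara_i$. Hence $\epsilon$ is additive over any partition of the set of pairs $(i,j)$. Throughout, the Sweedler sums are read with empty pieces allowed, consistently on both sides; this convention must be fixed first so the bijections below are onto.

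For (a), a term on the left is a pair $(\sigma,k)$: an $(n,m)$-shuffle and a cut position $k$ in the word $u_{\sigma^{-1}(1)}\otimes\cdots\otimes u_{\sigma^{-1}(n+m)}$.
\begin{itemize}
\item The first $k$ letters contain some number $i$ of the $\bara$'s and $k-i$ of the $\bara$-complement $\barb$'s. Since a shuffle preserves internal order, these are exactly $a^{(1)}=a_1\otimes\cdots\otimes a_i$ and $b^{(1)}=b_1\otimes\cdots\otimes b_{k-i}$. The first $k$ letters therefore form a $(i,k-i)$-shuffle $\sigma_1$ of $a^{(1)}$ and $b^{(1)}$, and the remaining letters form a shuffle $\sigma_2$ of $a^{(2)}$ and $b^{(2)}$.
\item Conversely, any choice of the splittings of $a$ and $b$ together with $\sigma_1,\sigma_2$ concatenates to a unique pair $(\sigma,k)$. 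This gives the bijection.
\item For the sign, split the pairs $(\bara_i,\barb_j)$ into four classes according to which halves they lie in. Pairs inside $(a^{(1)},b^{(1)})$ contribute the sign of $\sigma_1$, and pairs inside $(a^{(2)},b^{(2)})$ contribute the sign of $\sigma_2$.
\item Pairs from $(a^{(1)},b^{(2)})$ are never inverted. Pairs from $(a^{(2)},b^{(1)})$ are always inverted, which gives the factor $(-1)^{|a^{(2)}|'|b^{(1)}|'}$.
\end{itemize}

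For (b), fix the letter $x$ in position $|a^{(1)}|+1$ of the $a$-word.
\begin{itemize}
\item In any shuffle, the $\barb$'s placed before $x$ form an initial segment $b^{(1)}$, and those placed after $x$ form $b^{(2)}$. The letters before $x$ form a shuffle of $a^{(1)}$ and $b^{(1)}$, and the letters after form a shuffle of $a^{(2)}$ and $b^{(2)}$; this again gives a bijection.
\item The crossing inversions are precisely the pairs $(b^{(1)}, x)$ and $(b^{(1)}, a^{(2)})$. This yields $(-1)^{|b^{(1)}|'(|x|'+|a^{(2)}|')}$.
\end{itemize}

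Part (c) is the mirror argument, with $y$ fixed in the $b$-word and $a$ split according to which $\bara$'s precede $y$. The only crossing inversions are now $(a^{(2)},b^{(1)})$ and $(a^{(2)},y)$, since every $\bara$ after $y$ comes after all of $b^{(1)}$ and $y$. This gives $(-1)^{|a^{(2)}|'(|y|'+|b^{(1)}|')}$.

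No step is deep; the main risk is sign bookkeeping. Two points need care. First, the degree of $\barb_j=\ua\otimes b_j$ is $|b_j|'$ and that of $\bara_i$ is $|a_i|'$, so the crossing sums really are $|a^{(2)}|'|b^{(1)}|'$ and so on, with no extra unit contributions. Second, the asymmetry between (b) and (c) must come out correctly: in (b) the marked letter is on the $a$-side and is inverted with the earlier $b$'s, while in (c) it is on the $b$-side and is inverted with the later $a$'s. I would check both on the smallest cases $n=m=1$ before writing the general argument.
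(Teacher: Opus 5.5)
Your proof is correct. The paper gives no argument of its own for this lemma; it imports it from Amorim's Lemma~5.8. Your argument supplies the direct verification that the citation stands in for: a bijection between (shuffle, cut position or marked letter) and pairs of smaller shuffles, together with the observation that the exponent $\epsilon$ is additive over inverted pairs and that each cross-block class of pairs is either always or never inverted. The crossing signs you get in (a), (b) and (c) are the stated ones, including the asymmetry between (b) and (c).

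Your remark about empty Sweedler pieces is substantive. The paper's displayed convention $\sum_{i=1}^n(a_1\otimes\cdots\otimes a_i)\otimes(a_{i+1}\otimes\cdots\otimes a_n)$ excludes an empty first factor. With that literal reading, (a) fails: on the left the terms $\bara_1\otimes\operatorname{sh}(a_2,\ldots,a_n;b)$ appear, while every first factor on the right contains at least one $\barb$. So the sums must be read with $0\le i\le n$ (and likewise for $b$) on both sides, as you say.

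One small notational clash: you write ``position $|a^{(1)}|+1$'', but in this paper $|\cdot|$ denotes degree and the length is written $\#a^{(1)}$.
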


\begin{definition}\label{defcycshuff}
    A $(n, m)$-cyclic shuffle is a permutation
$\{\sigma(1), ... , \sigma(n + m)\}$ of $\{1,...,n+m\}$ obtained as follows. Perform a cyclic permutation of any order on the set $\{1, ... ,n\}$ and a cyclic permutation of any order on the set $\{n+ 1,...,n+m\}$. Then shuffle the two results to obtain $\{\sigma(1), ... , \sigma(n + m)\}$. This is a cyclic shuffle if $\sigma(n+1)$ appears before $\sigma(1)$, denoted as $\sigma(n+1)<\sigma(1)$ in this sequence.
\end{definition}
Note how this compares to the definition in \parencite[Section 4.3.2]{loday2013cyclic} or \parencite[Section 2.2]{shklyarov2016hodge}. In our context with $\bfa=(a_0|...,a_n)$ and $\bfb=(b_0|...,b_m)$, the cyclic shuffle of $\{\bfa, \bfb\}$ will involve performing a cyclic permutation of any order on $\bfa$ and $\bfb$ respectively, and then shuffling the result so that $b_0 $ appears before $a_0$. We denote this operation by $\operatorname{Sh}$. 

Let
$$
v_i= \begin{cases}a_i \otimes {\bf1}_B & , \ 1 \leqslant i \leqslant n+1, \\{\bf1}_A \otimes b_{i-n-1} & , n+1<1 \leqslant n+m+2 .\end{cases}
$$
Given a $(n+1, m+1)$-cyclic shuffle $\sigma$, we define $
\sigma(\bfa; \bfb)=(-1)^{\epsilon} v_{\sigma^{-1}(1)} \otimes \ldots \otimes v_{\sigma^{-1}(n+m+2)}\in (A\otimes B)^{\otimes (n+m+2)},$ where $ \displaystyle \epsilon=\sum_{\substack{i \leqslant n+1<j \\ \sigma(j)<\sigma(i)}}|v_j|'|v_i|' .$

Define the cyclic shuffle product
\begin{equation}
    \operatorname{Sh}(\bfa ; \bfb)=\sum_\sigma (-1)^{@_A+@_B} \sigma(\bfa; \bfb) \label{cycshuffle_defn}
\end{equation}
where we sum over all $(n+1, m+1)$-cyclic shuffles $\sigma$, and $@_A$, $@_B$ are the signs of the cyclic permutations of $\bfa$ and $\bfb$ respectively.

\subsection{Hochschild homology}
In the setting of ordinary DG algebras (and strictly associative algebras), the K\"unneth isomorphism for Hochschild homology is established via the classical Eilenberg–Zilber shuffle map $k_{\text{DG}}: C_*(A) \otimes C_*(B) \to C_*(A \otimes B)$ (see, e.g., Loday \parencite[Section 4.2]{loday2013cyclic}, and Shklyarov \parencite[Section 2.2]{shklyarov2016hodge}). Because DG multiplication is strictly associative (i.e., $\emm_n = 0$ for $n \ge 3$), the Hochschild differential $\bee_{\text{DG}}$ in $C_*(A\otimes B)$ splits linearly as $\bee_{\text{DG}} = \bee_A \otimes \text{id} + \text{id} \otimes \bee_B$, making $k_{\text{DG}}$ a strict chain map. Compatibility with the Mukai pairing reduces to standard Koszul sign factorizations across tensor factors.  

When $A$ and $B$ are proper $A_\infty$-algebras, the multiplication $\emm_n^\otimes$ on $A \otimes_\infty B$ is defined via operadic sums over planar binary trees $G_n^{\text{bin}}$ as described in Section \ref{sec-tensorprod}. This introduces a major technical challenge, namely, the classical map $k_{\text{DG}}$ fails to be a chain map because higher multiplication maps $\emm_n$ ($n \ge 3$) generate non-trivial boundary terms under $\bee$. We construct an extended shuffle map $k$ that incorporates these combinatorial tree structures. Furthermore, we show that $k$ is compatible with the chain-level Mukai pairing. This requires demonstrating that higher tree operations cancel in pairs along internal edges when evaluating the trace. This mechanism is not present in the DG setting where higher $\emm_n$ operations vanish.

\begin{definition}
    Let $\AAA, \BB$ be two $A_\infty$ algebras, and let $(C_*(\AAA), \bee), (C_*(\BB), \bee)$ be their respective Hochschild chain complexes. Denote $\bfa = a_0| a$ where $a=a_1\otimes ...\otimes a_n \in \AAA^{\otimes n}$ and $\bfb =b_0|b$ where $b=b_1\otimes ...\otimes b_m \in \BB^{\otimes m}$. Define $
    k: C_*(\AAA) \otimes C_*(\BB) \rightarrow C_{*}(\AAA \otimes_\infty \BB) $ by 
    \begin{equation}
        k(\bfa \otimes \bfb) = \sum_{(n,m)-shuffles} (-1)^{|b_0||a|'} a_0\otimes b_0 | \operatorname{sh} (\bara;\barb) \label{kunnethmap_defn}
    \end{equation}
    
    where $\bara = a_1\otimes \ub , ..., a_n \otimes \ub$ and $\barb = \ua \otimes b_1, ..., \ua\otimes b_m$.
\end{definition}
\begin{proposition}
    \label{lem: kb=bk}
    The map $k$ defined above is a quasi-isomorphism of complexes. 
\end{proposition}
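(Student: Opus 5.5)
Two things have to be shown: first that $k$ intertwines the differentials, $\bee_{A\otimes_\infty B}\circ k = k\circ(\bee_A\otimes \mathrm{id} + \mathrm{id}\otimes \bee_B)$, and then that it induces an isomorphism on homology. For the chain map property I would split $\bee = \bee_0+\bee_1$ on $C_*(A\otimes_\infty B)$ and apply it to $a_0\otimes b_0\,|\,\operatorname{sh}(\bara;\barb)$.

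\textbf{The $\bee_0$ part.} Here $\emm^\otimes$ is applied to a consecutive block $\operatorname{sh}(\bara;\barb)^{(2)}$ that does not contain the zeroth slot. I would expand using Lemma \ref{lemma21}(a) twice. Each block then has the form of a shuffle of a string of $\bara$'s with a string of $\barb$'s. By Proposition \ref{prop18}, part (a) of Definition \ref{commutingsubalgebradefn} applies: $\emm_k^\otimes$ vanishes on mixed blocks, except for $\emm_2^\otimes$ of one $\bara$ and one $\barb$. Those exceptional terms come in pairs, from the shuffles $(\bara_i,\barb_j)$ and $(\barb_j,\bara_i)$, with opposite signs by \eqref{eq:m2comsub}, so they cancel. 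Pure blocks give $\emm^A_k(\ldots)\otimes\ub$ or $\ua\otimes\emm^B_k(\ldots)$. Regrouping with Lemma \ref{lemma21}(b),(c) recovers $k(\bee_0\bfa\otimes\bfb)\pm k(\bfa\otimes\bee_0\bfb)$.

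\textbf{The $\bee_1$ part.} Here $\emm^\otimes$ is applied to $(s^{(3)},\,a_0\otimes b_0,\,s^{(1)})$, where $s=\operatorname{sh}(\bara;\barb)$. This is where part (b) of Definition \ref{commutingsubalgebradefn} enters, through \eqref{eq:mkcomsuba} and \eqref{eq:mkcomsubb}. The only nonzero terms are those in which all the $c_i$ surrounding $a_0\otimes b_0$ are of the same type. All-$\bara$ blocks yield $K(\emm^A(a^{(3)},a_0,a^{(1)})\otimes b_0)$, which is $\pm\,\emm^A(\cdots)\otimes b_0$, and all-$\barb$ blocks yield the analogous $B$-terms. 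One further case must be handled: the block consists only of $a_0\otimes b_0$ and $\emm_1^\otimes$ acts on it. This contributes $\emm_1^A\otimes\mathrm{id}+\mathrm{id}\otimes\emm^B_1$, which is the $n=0$ case already computed in Proposition \ref{prop18}. Reassembling the remaining shuffle of leftover letters with Lemma \ref{lemma21} gives $k\circ(\bee_1\otimes1\pm1\otimes\bee_1)$. \textbf{I expect the main obstacle to be sign bookkeeping}: matching the Koszul sign $(-1)^{|b_0||a|'}$ and the shuffle signs against the signs in \eqref{eq:mkcomsuba}, \eqref{eq:mkcomsubb} and the cyclic sign $@$ in $\bee_1$. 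I would check it first on the lowest-length cases and then argue inductively using Lemma \ref{lemma21}.

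\textbf{Quasi-isomorphism.} I would filter both sides by the length filtration, that is, by the number of tensor factors after the zeroth slot. Because $A$ and $B$ are $\mathbb{Z}/2$-graded, I would take the Hochschild chains in sum form and use bounded-below exhaustive filtrations so that the spectral sequences converge. On associated graded complexes only $\emm_1$ survives, together with $\emm_1^\otimes=\emm_1^A\otimes1+1\otimes\emm_1^B$. There $k$ becomes the classical Eilenberg–Zilber shuffle map for the underlying complexes.

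A filtration argument of this kind still needs care, since the $E_1$ comparison is not obviously an isomorphism. The cleaner route is to reduce to minimal models, taking $\emm_1=0$ via homological perturbation; Proposition \ref{prop18} and the construction of $k$ are compatible with this reduction. I would then compare with the strict setting: the $E_1$ pages are bar-type complexes for $H(A)$, $H(B)$ and $H(A)\otimes H(B)$ with $\emm_2$ only. On these, $k$ is the classical Künneth shuffle quasi-isomorphism of \parencite[Section 4.2]{loday2013cyclic}, since $\emm_2^\otimes$ agrees with the tensor product of the $\emm_2$'s by Case 1 of Theorem \ref{thm:gencommutingsub}. Comparison of spectral sequences then finishes the argument. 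Alternatively, I would invoke Morita invariance of Hochschild homology together with $A\otimes_\infty B\simeq A\otimes B$ in the dg setting, via the quasi-isomorphism in \parencite[Theorem 4.1]{amorim2016tensor}, and Shklyarov's dg Künneth theorem.
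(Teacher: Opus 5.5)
Your chain-map argument is the paper's. You split $\bee=\bee_0+\bee_1$, decompose the shuffle with Lemma~\ref{lemma21}(a), and kill mixed blocks with the commuting-subalgebra relations of Proposition~\ref{prop18}. You then reassemble with Lemma~\ref{lemma21}(b),(c). You are in fact more explicit than the paper in noting that the mixed $\emm_2^\otimes(\bara_i,\barb_j)$ terms cancel in pairs.

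The quasi-isomorphism part ends up where the paper does: length filtration, comparison at $E^2$ with the classical shuffle isomorphism, and the comparison theorem. However, the detour through minimal models is unnecessary, and as you state it, it is unjustified.

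\textbf{It is unnecessary.} For any strictly unital $A$, the differential $d^1$ of the length spectral sequence is already induced by $\emm_2$ alone. So $(E^1,d^1)$ is the reduced Hochschild complex of the graded algebra $(H(A),[\emm_2])$. The same holds for $B$ and for $A\otimes_\infty B$, where $[\emm_2^\otimes]$ gives the tensor product algebra. Hence the map induced by $k$ on $E^2$ is the classical shuffle map whether or not $\emm_1=0$. This is exactly what the paper does, and it removes your worry that ``the $E_1$ comparison is not obviously an isomorphism'': it isn't one, and you never need it to be.

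\textbf{It is unjustified.} Your claim that the construction of $k$ is ``compatible with this reduction'' would need a tensor product of the $A_\infty$ quasi-isomorphisms $A\to A_{\min}$ and $B\to B_{\min}$. It would also need naturality of $k$ with respect to the induced maps on Hochschild chains, which involve the higher components of those morphisms. None of this is established here, and it is not routine.

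Likewise, your alternative route via invariance of $HH_*$ and Shklyarov's dg theorem only produces an abstract isomorphism $HH_*(A)\otimes HH_*(B)\cong HH_*(A\otimes_\infty B)$. It does not show that this particular $k$ induces it.

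Drop the reduction and compare at $E^2$ directly.
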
 

\begin{proof}
We first show that it is a chain map. That is: $\bee \circ k = k \circ (\bee\otimes \text{id} + \text{id} \otimes \bee)$, where $\bee$ is the Hochschild boundary map.

On the first term of $\bee$ we have
\begin{align}
    \bee_0 ( k(\bfa, \bfb))  &=  \sum (-1)^{\alpha+\gamma} a_0 \otimes b_0 | \operatorname{sh}(\bara; \barb)^{(1)} \emm(\operatorname{sh}(\bara; \barb)^{(2)}) \operatorname{sh}(\bara; \barb)^{(3)}  \notag\\
        &=  \sum (-1)^{\alpha+\beta+\gamma} a_0 \otimes b_0 | \operatorname{sh}(\bara^{(1)}; \barb^{(1)}) \otimes \emm \left(\operatorname{sh}(\bara^{(2)}; \barb^{(2)})\right) \otimes \operatorname{sh}(\bara^{(3)}; \barb^{(3)}) \label{eqlem21}\\
        &=  \sum (-1)^{\alpha+\beta+\gamma} a_0 \otimes b_0 | \operatorname{sh}(\bara^{(1)}; \barb^{(1)}) \otimes \emm (\bara^{(2)}) \otimes \operatorname{sh}(\bara^{(3)}; \barb^{(3)}) \notag\\
        &\quad+ \  \sum (-1)^{\alpha+\beta+\gamma} a_0 \otimes b_0 | \operatorname{sh}(\bara^{(1)}; \barb^{(1)}) \otimes \emm (\barb^{(2)})\otimes \operatorname{sh}(\bara^{(3)}; \barb^{(3)}) \label{eqprop18}\\
        &=  \sum (-1)^{\alpha+\beta +\gamma+\delta} a_0 \otimes b_0 | \operatorname{sh}(\bara^{(1)}, \emm (\bara^{(2)}), \bara^{(3)} ; \barb) \notag\\
        &\quad + \ \sum (-1)^{\alpha+\beta +\gamma+\phi} a_0 \otimes b_0 | \operatorname{sh}(\bara; \barb^{(1)}, \emm (\barb^{(2)}), \barb^{(3)}) \notag\\
     & = k(\bee_0(\bfa); \bfb) + (-1)^{|\bfa|} k(\bfa;\bee_0( \bfb)) \notag
\end{align}
where equations (\ref{eqlem21}) and (\ref{eqprop18}) follow from Lemma \ref{lemma21} and Proposition \ref{prop18} respectively. The signs are given by $\alpha = |b_0||a|'$, $\beta= |b^{(1)}|'(|a^{(2)}|'+|a^{(3)}|') + |b^{(2)}|'|a^{(3)}|'$, $\gamma = |a_0|+| b_0|+1 + |a^{(1)}|'+|b^{(1)}|'$, $\delta=|b^{(1)}|'(|a^{(2)}|'+1+|a^{(3)}|')$, $\phi=|a^{(3)}|' (|b^{(2)}|'+1+|b^{(1)}|').$\\

Similarly, on the second term of $\bee$, we have
\begin{align}
    \bee_1 ( k(\bfa, \bfb))  &= \sum (-1)^{\alpha+\eta} \emm(\operatorname{sh}(\bara; \barb)^{(3)}, a_0 \otimes b_0, \operatorname{sh}(\bara; \barb)^{(1)}) \ | \ \operatorname{sh}(\bara; \barb)^{(2)}  \notag\\
        &= \sum (-1)^{\alpha+\beta+ \eta} \emm \left(\operatorname{sh}(\bara^{(3)}; \barb^{(3)}), a_0 \otimes b_0, \operatorname{sh}(\bara^{(1)}; \barb^{(1)})\right) | \operatorname{sh}(\bara^{(2)}; \barb^{(2)}) \label{eqlem21v2}\\
        &= \sum (-1)^{\alpha+\beta+ \eta} \emm \left(\bara^{(3)}, a_0 \otimes b_0, \bara^{(1)}\right) | \operatorname{sh}(\bara^{(2)}; \barb^{(2)}) \notag\\
        & \quad + \sum (-1)^{\alpha+\beta+ \eta} \emm \left(\barb^{(3)}, a_0 \otimes b_0, \barb^{(1)}\right) | \operatorname{sh}(\bara^{(2)}; \barb^{(2)}) \label{eqprop18v2}\\
        &= \sum (-1)^{\alpha+\beta+ \eta + \theta_1} \emm \left(\bara^{(3)}, a_0 , \bara^{(1)}\right)\otimes b_0 | \operatorname{sh}(\bara^{(2)}; \barb^{(2)}) \notag\\
        & \quad + \sum (-1)^{\alpha+\beta+ \eta + \theta_2} a_0 \otimes \emm \left(\barb^{(3)}, b_0, \barb^{(1)}\right) | \operatorname{sh}(\bara^{(2)}; \barb^{(2)}) \notag\\
        & = k(\bee_1(\bfa); \bfb) + (-1)^{|\bfa|} k(\bfa;\bee_1( \bfb)) 
\end{align}
where equations (\ref{eqlem21v2}) and (\ref{eqprop18v2}) follow from Lemma \ref{lemma21} and Proposition \ref{prop18} respectively.

The signs are given by $\eta= (|a^{(3)}|' + |b^{(3)}|')(|a^{(2)}|'+|b^{(2)}|'+|a^{(1)}|'+|b^{(1)}|' +|a_0|' + |b_0|)$, $\theta_1 = |b_0||b^{(1)}|'$, $\theta_2 = |a_0|(|b^{(3)}|' +1)$.
Hence, $k$ is a chain map.\\

To show that it induces an isomorphism in homology, consider the length filtration on the chain complex of $\AAA$ defined by $$\mathcal{F}^lC_*(\AAA) = \bigoplus_{n\leq l} \AAA[1]\otimes \AAA[1]^{\otimes n}.$$ Clearly, the Hochschild boundary map $\bee$ restricts to a boundary map on each filtered piece making it a chain complex for each $l$. Moreover we have the inclusion $$\ldots \subset \mathcal{F}^{l-1}C_*(\AAA) \subset \mathcal{F}^lC_*(\AAA) \subset \mathcal{F}^{l+1}C_*(\AAA) \subset \ldots \qquad \text{and} \qquad C_*(A)=\bigcup_l \mathcal{F}^l C_*(A)$$
Taking homology of the quotient $E^0 = \mathcal{F}^*C_*(\AAA)/\mathcal{F}^{*-1}C_*(\AAA)$, with respect to the induced differential gives the $E^1$-page as the algebra complex $$E^1 = \left( \bigoplus_l H^*(\AAA)^{\otimes l}, \bee_{\emm_2}\right)$$ where $H^*(\AAA)$ is the cohomology of $\AAA$ with differential $\emm_1$, and $\bee_{\emm_2}$ is the Hochschild differential induced by $\emm_2$ only. Taking homology now gives the Hochschild homology of the associative algebra above. For two $A_\infty$-algebras $\AAA, \BB$ we have the map on $E^2$-pages $$[k]: HH_*(H^*(\AAA), \emm_2^\AAA) \otimes HH_*(H^*(\BB), \emm_2^\BB) \longrightarrow HH_*(H^*(\AAA\otimes_\infty\BB), \emm_2^\otimes)$$ given by the shuffle map which is known to be an isomorphism for associative algebras (see for example \parencite[Theorem 0.1.2]{tsygan2004cyclic}, \parencite[Theorem 4.2.5]{loday2013cyclic}). Hence, by the spectral sequence comparison theorem (\parencite[Theorem 5.2.12]{weibel1994introduction}), the result follows.
\end{proof}

\begin{theorem} \label{thm: mukai}
Assume $A$ and $B$ are finite-dimensional.\footnote{one can always do this by going to the minimal model since $A$ and $B$ are proper.} 
The K\"unneth quasi-isomorphism $$k: C_*(A) \otimes C_*(B) \longrightarrow C_*(A \otimes_{\infty} B)$$ respects the chain-level Mukai pairings, that is, for all $\bfa^1, \bfa^2 \in C_*(A)$ and $\bfb^1, \bfb^2 \in C_*(B)$, the chain-level Mukai pairings satisfy:
\begin{equation}
    \left\langle  k(\bfa^1; \bfb^1),  k(\bfa^2; \bfb^2)\right\rangle_{Muk}= (-1)^{|\bfb^1|'|\bfa^2|'}\langle \bfa^1, \bfa^2 \rangle_{Muk} \cdot \langle \bfb^1, \bfb^2 \rangle_{Muk}. \label{eq:muk_k}
\end{equation}
\end{theorem}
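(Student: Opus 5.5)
We will compute both sides directly from Sheridan's chain-level formula (\ref{eq:Mukaipairing}). Write $\bfa^1=a_0|a$, $\bfa^2=a'_0|a'$, $\bfb^1=b_0|b$, $\bfb^2=b'_0|b'$. The left side is the supertrace over $A\otimes B$ of an operator $c\mapsto \emm^\otimes_*(X^{(3)}, a_0\otimes b_0, X^{(1)}, \emm^\otimes_*(X^{(2)}, c, Y^{(3)}, a'_0\otimes b'_0, Y^{(1)}), Y^{(2)})$. Here $X=\operatorname{sh}(\bara;\barb)$, $Y=\operatorname{sh}(\bara';\barb')$, and we sum over all Sweedler splittings. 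The plan has three steps.

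\textbf{Step 1: trace over a basis.} Since the supertrace over $A\otimes B$ is multiplicative on elementary tensors, $\operatorname{str}(f\otimes g)=\operatorname{str}(f)\operatorname{str}(g)$, it suffices to evaluate the operator on $c=x\otimes y$ with $x,y$ running over bases of $A$ and $B$.

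\textbf{Step 2: identify the surviving terms.} We use Lemma \ref{lemma21}(a) to split the shuffles: $X^{(i)}=\operatorname{sh}(\bara^{(i)};\barb^{(i)})$, and similarly for $Y$. The inner operation then has the form
\[
\emm^\otimes(\text{units-type entries},\ x\otimes y,\ \dots,\ a'_0\otimes b'_0,\ \dots),
\]
which is exactly the situation of Theorem \ref{thm:gencommutingsub}, with $u_A\otimes u_B=x\otimes y$ and $v_A\otimes v_B=a'_0\otimes b'_0$. That theorem forces the $\bara,\barb$ blocks into the prescribed orderings. It expresses the inner operation as $U^A(a^{(2)},x,a'^{(3)},a'_0,a'^{(1)})\otimes W^B(b^{(2)},y,b'^{(3)},b'_0,b'^{(1)})$, where $U,W$ are the unique trees of Figure \ref{fig:UW}; in the degenerate cases the commuting-subalgebra cases of Proposition \ref{prop18} apply instead.

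The outer $\emm^\otimes$ has the same shape, with $a_0\otimes b_0$ and the inner output as its two special inputs. Applying Theorem \ref{thm:gencommutingsub} again, the outer operation also factors as a tree on $A$ tensored with a tree on $B$.

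\textbf{Step 3: reduce to Sheridan's formula.} The goal is to show that, after taking traces, the total contribution on the $A$ side is exactly $\langle\bfa^1,\bfa^2\rangle_{Muk}$ and on the $B$ side $\langle\bfb^1,\bfb^2\rangle_{Muk}$. The trees $U$ and $W$ contain binary vertices where units from the other factor are inserted, and by strict unitality these act as identities. Hence each composite $A$-tree reduces to $\emm^A(\dots,\emm^A(\dots,x,\dots),\dots)$, which is precisely the shape appearing in (\ref{eq:Mukaipairing}) for $A$, and likewise for $B$. The sum over shuffles and orderings must then reorganize into the product of the two Sweedler sums.

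\textbf{Main obstacle.} The hardest part is the bookkeeping in Step 3. Cross terms appear in which the non-binary vertex of the inner $A$-tree lies where Sheridan's formula would require the outer one, or in which $b$-entries split between the inner and outer operations in mismatched ways. I expect these extra configurations to cancel in pairs under the trace. The mechanism is cyclicity of the supertrace: $\operatorname{str}(\emm(\dots,\emm(\dots,c,\dots)\dots))$ is unchanged when an entry adjacent to $c$ is moved between the inner and outer operation. This is an internal-edge sliding cancellation, as anticipated in the discussion preceding the definition of $k$.

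The first thing I would try is to match each cross term with the term in which the corresponding block is moved across the internal edge joining the two vertices, and to check that the signs $\epsilon$ from (\ref{eqsignofmytree}) are opposite. Finally, I would collect the Koszul signs from $k$, namely $(-1)^{|b_0||a|'}$, the shuffle signs, and $\epsilon$. These should leave exactly the global factor $(-1)^{|\bfb^1|'|\bfa^2|'}$, arising from commuting the $B$-data of $\bfb^1$ past the $A$-data of $\bfa^2$.
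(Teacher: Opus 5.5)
Your plan is essentially the paper's proof: split the shuffles with Lemma \ref{lemma21}(a), apply Theorem \ref{thm:gencommutingsub} first to the inner and then to the outer $\emm^\otimes_*$, strip the unit-carrying binary vertices by strict unitality, and factor the trace of $\emm^A_*(\dots)\otimes\emm^B_*(\dots)$ as a product of traces. The ``cross terms'' you flag as the main obstacle do not occur, so no trace-cyclicity cancellation is needed: Lemma \ref{lemma21}(a) matches Sweedler splittings of $\operatorname{sh}(\bara^i;\barb^i)$ bijectively with pairs of independent splittings of $a^i$ and $b^i$, and the uniqueness in Theorem \ref{thm:gencommutingsub} leaves exactly one surviving shuffle in each block, so the double Sweedler sum factors directly (the ``sliding'' identity you propose is not valid in general, so it is just as well you do not need it).
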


\begin{proof} 
Let $\bfa^i = a_0^i\ | \ a_1^i\otimes...\otimes  a_n^i \in C_*(\AAA)$, for $i=1,2$ and similarly define $ \bfb^j \in  C_*(\BB)$ for $j=1,2.$
    \begin{align}
        k(\bfa^1; \bfb^1) &= \sum (-1)^{|b_0^1||a^1|'} a_0^1 \otimes b_0^1 | \operatorname{sh}(\bara^1;\barb^1)^{(1)}, \operatorname{sh}(\bara^1;\barb^1)^{(2)}, \operatorname{sh}(\bara^1;\barb^1)^{(3)} \notag \\
        =& \sum (-1)^{|b_0^1||a^1|'+\Delta^1} a_0^1 \otimes b_0^1 | \operatorname{sh}(\bara^{1(1)};\barb^{1(1)}), \operatorname{sh}(\bara^{1(2)};\barb^{1(2)}), \operatorname{sh}(\bara^{1(3)};\barb^{1(3)}) \\
        k(\bfa^2; \bfb^2) &= \sum (-1)^{|b_0^2||a^2|'}a_0^2 \otimes b_0^2 | \operatorname{sh}(\bara^2;\barb^2)^{(1)}, \operatorname{sh}(\bara^2;\barb^2)^{(2)}, \operatorname{sh}(\bara^2;\barb^2)^{(3)} \notag\\
        =& \sum (-1)^{|b_0^2||a^2|'+\Delta^2} a_0^2 \otimes b_0^2 | \operatorname{sh}(\bara^{2(1)};\barb^{2(1)}), \operatorname{sh}(\bara^{2(2)};\barb^{2(2)}), \operatorname{sh}(\bara^{2(3)};\barb^{2(3)})
    \end{align}
    where $$\Delta^1= |a^{1(2)}|'|b^{1(1)}|'+|a^{1(3)}|'|b^{1(2)}|' + |a^{1(3)}|'|b^{1(1)}|' \text{ and } \Delta^2 = |a^{2(2)}|'|b^{2(1)}|' + |a^{2(3)}|'|b^{2(2)}|' + |a^{2(3)}|'|b^{2(1)}|'.$$
    
Applying the Mukai pairing, we obtain
\begin{align}
    \left\langle  k(\bfa^1; \bfb^1),  k(\bfa^2; \bfb^2)\right\rangle_{Muk} &= \notag\\
    \sum (-1)^{|b_0^1||a^1|'+ |b_0^2||a^2|'+\Delta^1 +\Delta^2+\#}& \ tr[
     c_\AAA \otimes c_\BB \mapsto  \emm_*^\otimes ( \operatorname{sh}(\bara^{1(3)};\barb^{1(3)}), a_0^1\otimes b_0^1 , \operatorname{sh}(\bara^{1(1)};\barb^{1(1)}), \notag \\
     \emm_*^\otimes(\operatorname{sh}(\bara^{1(2)};\barb^{1(2)}), c_\AAA &\otimes c_\BB, \operatorname{sh}(\bara^{2(3)};\barb^{2(3)}), a_0^2\otimes b_0^2, \operatorname{sh}(\bara^{2(1)};\barb^{2(1)}), \operatorname{sh}(\bara^{2(2)};\barb^{2(2)})]
\end{align}
where $\#$ is the corresponding sign of the Mukai pairing as in equation (\ref{eq:Mukaipairing}), given explicitly by
\begin{align}
    \# =  1 + (|c_A|+&|c_B|)(|a_0^2|'+|b_0^2|'+ |a^{2(1)}|'+|a^{2(2)}|'+|a^{2(3)}|'+|b^{2(1)}|'+|b^{2(2)}|'+|b^{2(3)}|') \notag\\
    &+|a_0^1|+|b_0^1|+1+|a^{1(1)}|'+|b^{1(1)}|'+|a^{1(3)}|'+|b^{1(3)}|'\notag\\
    +(|a^{1(3)}|'+&|b^{1(3)}|')(|a_0^1|+|b_0^1|+1+|a^{1(1)}|'+|b^{1(1)}|'+|a^{1(2)}|'+|b^{1(2)}|')\notag\\
    +(|a^{2(3)}|'+&|b^{2(3)}|')(|a_0^2|+|b_0^2|+1+|a^{2(1)}|'+|b^{2(1)}|'+|a^{2(2)}|'+|b^{2(2)}|') \label{signofmuk}
\end{align}

Applying Theorem \ref{thm:gencommutingsub} on the inner $\emm_*^\otimes$ we obtain \begin{align}
     \sum (-1)^{* +\epsilon_{in}} \ tr[ c_\AAA \otimes c_\BB \mapsto &\emm_*^\otimes ( \operatorname{sh}(\bara^{1(3)};\barb^{1(3)}), a_0^1\otimes b_0^1 , \operatorname{sh}(\bara^{1(1)};\barb^{1(1)}), \alpha \otimes \beta , \operatorname{sh}(\bara^{2(2)};\barb^{2(2)})] \label{eq31muk}
\end{align}
where $*=|b_0^1||a^1|'+ |b_0^2||a^2|'+\Delta^1+\Delta^2+\#$, for $\#$ given in (\ref{signofmuk}) above; $\alpha = \emm_*^\AAA(a^{1(2)}, c_A, a^{2(3)}, a_0^2, a^{2(1)})$; $\beta = \emm_*^\BB(b^{1(2)}, c_\BB, b^{2(3)}, b_0^2, b^{2(1)})$, and $\epsilon_{in}$ is the corresponding sign of the unique tree in Theorem \ref{thm:gencommutingsub}, explicitly computed as
\begin{align}
    \epsilon_{in}=&(\#a^{2(3)}+\#a^{2(1)})(\#b^{1(2)}+\#b^{2(3)})+\#b^{2(1)}(\#a^{1(2)}+\#a^{2(3)}+\#a^{2(1)})\notag\\
    +|b_0^2||a^{2(1)}| &+ (|c_B|+|b^{2(3)}|)(|a^{2(3)}|+|a_0^2|+ |a^{2(1)}|) +|b^{1(2)}|(|c_A|+|a^{2(3)}|+|a_0^2|+|a^{2(1)}|) \notag\\
    +&\#b^{2(1)}|a^{1(2)}| + (\#b^{1(2)}+\#b^{2(3)}+\#b^{2(1)})(|a^{2(3)}|+|a^{2(1)}|+|c_A|+|a_0^2|)\notag\\
    &+(\#a^{2(3)}+\#a^{2(1)})(|b^{1(2)}|+|c_B|+|b^{2(3)}|) + (\#a^{2(1)}-1)|b_0^2|
    \label{eqsignofmytreein}
\end{align}

Further applying Theorem \ref{thm:gencommutingsub} on the $\emm_*^\otimes$ in equation (\ref{eq31muk}), and observing that the trace of a tensor product is the product of traces, we obtain \begin{align}
     \sum (-1)^{* +\epsilon_{in}+\epsilon_{out}}\ tr[ c_\AAA \otimes c_\BB \mapsto \emm_*^\AAA (a^{1(3)},  a_0^1,  a^{1(1)},  &\alpha , a^{2(2)} )\otimes \emm_*^\BB (b^{1(3)}, b_0^1, b^{1(1)}, \beta , b^{2(2)}) \notag\\
    =(-1)^{|\bfb^1|'|\bfa^2|'}\ \sum (-1)^{\#_A }\ tr [c_\AAA \mapsto &\emm_*^\AAA (a^{1(3)}, a_0^1, a^{1(1)}, \alpha , a^{2(2)} )]  \notag \\ \cdot \
\sum (-1)^{\#_B} \ 
tr [c_\BB \mapsto  &\emm_*^\BB (b^{1(3)},  b_0^1, b^{1(1)}, \beta , b^{2(2)} )]  \label{eqtraceof prod}\\
    = (-1)^{|\bfb^1|'|\bfa^2|'}\langle \bfa^1, \bfa^2 \rangle_{Muk} \cdot \langle \bfb^1, \bfb^2 \rangle_{Muk} & \qquad 
\end{align}
where
\begin{align}
    \#_A = & 1+|c_A|(|a_0^2|'+|a^{2(1)}|'+|a^{2(2)}|'+|a^{2(3)}|') + |a_0^1|'+|a^{1(1)}|'+|a^{1(3)}|' \notag\\
    & |a^{1(3)}|'(|a_0^1|'+|a^{1(1)}|'+|a^{1(2)}|') + |a^{2(3)}|'(|a_0^2|'+|a^{2(1)}|'+|a^{2(2)}|'), \notag
\end{align}
\begin{align}
    \#_B = & 1+|c_B|(|b_0^2|'+|b^{2(1)}|'+|b^{2(2)}|'+|b^{2(3)}|') + |b_0^1|'+|b^{1(1)}|'+|b^{1(3)}|' \notag\\
    & |b^{1(3)}|'(|b_0^1|'+|b^{1(1)}|'+|b^{1(2)}|') + |b^{2(3)}|'(|b_0^2|'+|b^{2(1)}|'+|b^{2(2)}|'), \notag
\end{align}

and
\begin{align}
    \epsilon_{out}=&(\#a^{1(1)}+\#a^{2(2)})(\#b^{1(3)}+\#b^{1(1)})+\#b^{2(2)}(\#a^{1(3)}+\#a^{1(1)}+\#a^{2(2)})\notag\\
    +|\beta||a^{2(2)}| + &(|b_0^1|+|b^{1(1)}|)(|a^{1(1)}|+|\alpha|+ |a^{2(2)}|) +|b^{1(3)}|(|a_0^1|+|a^{1(1)}|+|\alpha|+|a^{2(2)}|) \notag\\
    +&\#b^{2(2)}|a^{1(3)}| + (\#b^{1(3)}+\#b^{1(1)}+\#b^{2(2)})(|a^{1(1)}|+|a^{2(2)}|+|a_0^1|+|\alpha|)\notag\\
    &+(\#a^{1(1)}+\#a^{2(2)})(|b^{1(3)}|+|b_0^1|+|b^{1(1)}|) + (\#a^{2(2)}-1)|\beta|
    \label{eqsignofmytreeout}
\end{align}
with $\#a^{i(j)}=m$ if $a^{i(j)}=a^{i(j)}_1\otimes \cdots \otimes a^{i(j)}_m$, and $|a^{i(j)}|'=|a^{i(j)}|+\#a^{i(j)}$, and similarly defined for $\#b^{i(j)}$.

\end{proof}

\begin{corollary}
    The Mukai pairing is non-degenerate on $\AAA \otimes_\infty \BB$.
\end{corollary}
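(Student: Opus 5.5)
The corollary follows from Theorem \ref{thm: mukai} together with Proposition \ref{lem: kb=bk}, by a linear-algebra argument on homology. Since $A$ and $B$ are smooth and proper (finite-dimensional after passing to minimal models), Shklyarov's result, \parencite[Theorem 4]{shklyarov2013hirzebruch}, as recalled in Section \ref{makereference1}, gives that $\langle-,-\rangle_{Muk}$ is non-degenerate on $HH_*(A)$ and on $HH_*(B)$. Properness also makes $HH_*(A)$ and $HH_*(B)$ finite-dimensional. I would therefore work entirely on homology rather than at the chain level.

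First, by the K\"unneth theorem over the field $\mathrm{k}$, the homology of $C_*(A)\otimes C_*(B)$ is $HH_*(A)\otimes HH_*(B)$. By Proposition \ref{lem: kb=bk}, $k$ induces an isomorphism
\[
[k]: HH_*(A)\otimes HH_*(B)\xrightarrow{\ \cong\ } HH_*(A\otimes_\infty B).
\]
Second, the chain-level identity (\ref{eq:muk_k}) descends to homology, because the chain-level Mukai pairings on all three complexes induce the homology pairings. Hence, under $[k]$, the Mukai pairing on $HH_*(A\otimes_\infty B)$ corresponds to the pairing on $HH_*(A)\otimes HH_*(B)$ given by
\[
\langle \alpha_1\otimes\beta_1,\alpha_2\otimes\beta_2\rangle=(-1)^{|\beta_1|'|\alpha_2|'}\langle\alpha_1,\alpha_2\rangle_{Muk}\langle\beta_1,\beta_2\rangle_{Muk}.
\]

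Third, I will show this graded tensor product of non-degenerate pairings is non-degenerate. Choose homogeneous bases $\{e_i\}$ of $HH_*(A)$ and $\{f_j\}$ of $HH_*(B)$. In the basis $\{e_i\otimes f_j\}$, the Gram matrix is the Kronecker product of the two Gram matrices, twisted by a sign matrix that depends only on degrees. Since the Mukai pairing has degree zero, nonzero entries only pair classes of complementary parity. So I can split by parity and write the twisted matrix as $D_1(G_A\otimes G_B)D_2$, with $D_1,D_2$ diagonal $\pm1$ matrices. Its determinant is then $\pm\det(G_A)^{\dim HH_*(B)}\det(G_B)^{\dim HH_*(A)}\neq0$.

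Equivalently, and more cleanly, the adjoint map $HH_*(A)\otimes HH_*(B)\to (HH_*(A)\otimes HH_*(B))^\vee\cong HH_*(A)^\vee\otimes HH_*(B)^\vee$ is the tensor product of the two adjoint isomorphisms, up to the Koszul sign $(-1)^{|\beta_1|'|\alpha_2|'}$, which is exactly the Koszul sign for the swap. It is therefore an isomorphism.

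The only step I expect to need care is the sign bookkeeping: checking that the sign in (\ref{eq:muk_k}) is precisely the Koszul rule, so that the adjoint really factors as a tensor product of isomorphisms. Since the sign factors as a product of a function of $(\beta_1,\alpha_2)$ alone, it cannot create degeneracy, so I expect no real obstruction. Transporting along the isomorphism $[k]$ then gives non-degeneracy on $HH_*(A\otimes_\infty B)$.
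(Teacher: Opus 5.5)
Your proposal is correct and is exactly the argument the paper leaves implicit (the corollary is stated without proof right after Theorem~\ref{thm: mukai}). By Theorem~\ref{thm: mukai}, the isomorphism $[k]\colon HH_*(A)\otimes HH_*(B)\to HH_*(A\otimes_\infty B)$ carries the Koszul-signed product of the two Mukai pairings to the Mukai pairing on $A\otimes_\infty B$, and a graded tensor product of non-degenerate pairings is non-degenerate. One small correction: finite-dimensionality of $HH_*$ comes from smoothness and properness together, not from properness alone (e.g.\ $\mathrm{k}[x]/(x^2)$ is proper but has infinite-dimensional Hochschild homology); since you are in the smooth and proper setting, this does not affect the argument.
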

    
\subsection{Negative cyclic homology} 
We extend Shklyarov \parencite[Section 2]{shklyarov2016hodge}’s construction of the non-commutative Hodge-theoretic K\"unneth map from DG algebras to the $A_\infty$ setting. In particular, the main features of his construction carry over, including the compatibility of the K\"unneth map with the $u$-connection. In our setting, we establish in addition that the cyclic K\"unneth map preserves the chain-level higher residue pairings, hence, a morphism of formal EP structures. We use the cyclic shuffle map to define a K\"unneth map for the negative cyclic complexes as follows.
\begin{definition}
    Let $\AAA, \BB$ be two $A_\infty$ algebras, and let $(CC_*^-(\AAA), \bee +u\Bee), (CC_*^-(\BB), \bee +u\Bee)$ be their respective negative cyclic chain complexes. Denote $\bfa = a_0| a$ where $a=a_1, ..., a_n$ and $\bfb =b_0|b$ where $b=b_1,...,b_m$. Define $
    K: C_*(\AAA) \otimes C_*(\BB) \rightarrow C_{*}(\AAA\otimes_\infty \BB) $ by 
    \begin{equation}
        K(\bfa \otimes \bfb) = \sum \ua\otimes \ub | \operatorname{Sh}(\bara_0, \bara_1,\ldots, \bara_n;\barb_0,\barb_1,\ldots,\barb_m) \label{cycKunneth_defn}
    \end{equation}
    
    where $\bara_i = a_i\otimes \ub,$ and $\barb_j = \ua\otimes b_j$, and extend $u$-linearly.
\end{definition}

\begin{proposition}\label{cycqism}
     The 
map $k+uK: CC_*^-(A) \otimes_{k[[u]]} CC_*^-(B) \rightarrow CC_*^-(A\otimes_\infty B)$ is a quasi-isomorphism of complexes.
\end{proposition}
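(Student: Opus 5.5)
The proof has two steps: first show that $k+uK$ is a chain map, then deduce that it is a quasi-isomorphism from Proposition \ref{lem: kb=bk} by a $u$-adic filtration argument.

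\textbf{Chain map.} Expanding in powers of $u$, the identity
\[
(\bee+u\Bee)\circ(k+uK)=(k+uK)\circ\bigl((\bee+u\Bee)\otimes \mathrm{id}+\mathrm{id}\otimes(\bee+u\Bee)\bigr)
\]
(with the usual Koszul sign on the second summand) splits into three identities:
\begin{align*}
\bee k&=k(\bee\otimes\mathrm{id}+\mathrm{id}\otimes\bee),\\
\Bee k+\bee K&=k(\Bee\otimes\mathrm{id}+\mathrm{id}\otimes\Bee)+K(\bee\otimes\mathrm{id}+\mathrm{id}\otimes\bee),\\
\Bee K&=K(\Bee\otimes\mathrm{id}+\mathrm{id}\otimes\Bee).
\end{align*}

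The first identity is Proposition \ref{lem: kb=bk}.

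For the third, both sides vanish. Every term of $K$ has zeroth entry $\ua\otimes\ub$. Applying $\Bee$ moves this unit into a bar position, which gives zero in reduced chains. On the right, $\Bee$ of anything is again a chain with $\mathbf{1}$ in position zero, and $K$ then inserts $\bara_0=\ua\otimes\ub$ into a bar position, which is again zero.

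The second identity is the genuine content; it is the $A_\infty$ analogue of Loday's \parencite[Section 4.3]{loday2013cyclic} and Shklyarov's \parencite[Section 2.2]{shklyarov2016hodge} identity for the cyclic shuffle map. I would split $\bee=\bee_0+\bee_1$ and treat the terms of $\bee K$ according to the entries that $\emm^\otimes$ swallows.

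Since every entry of $\operatorname{Sh}(\bara;\barb)$ has the form $\bara_i$ or $\barb_j$, Proposition \ref{prop18}, together with the $\operatorname{Sh}$-analogue of Lemma \ref{lemma21}(a), implies that $\emm^\otimes$ of a consecutive block is nonzero only in three cases:
\begin{itemize}
\item the block consists entirely of $\bara$'s;
\item the block consists entirely of $\barb$'s;
\item the block is a binary pair $\emm_2^\otimes(\bara_i,\barb_j)$ or $\emm_2^\otimes(\barb_j,\bara_i)$.
\end{itemize}

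Pure blocks not containing the cyclic-zero entry reproduce $K\bee_0$. The $\bee_1$ terms in $\bee K$ start from the unit $\ua\otimes\ub$ in position zero, so by strict unitality only $\emm_2$ survives, either with the first or the last entry. Together with blocks passing through $\bara_0$ or $\barb_0$, these give $K\bee_1$ and the $k\Bee$ terms: the latter arise from those cyclic shuffles in which $\bara_0$ and $\barb_0$ become adjacent and are multiplied to $\pm a_0\otimes b_0$.

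The mixed binary pairs cancel in pairs by the graded commutativity (\ref{eq:m2comsub}) of Proposition \ref{prop18}, exactly as in the DG case. What remains on the left is $\Bee k$, which matches the remaining boundary terms.

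The main obstacle is this bookkeeping: matching every term and checking the signs $@_A$, $@_B$ and the shuffle signs. I would first verify the identity on low-length chains, then organize the general argument by the position of $\barb_0$ relative to $\bara_0$. Because higher $\emm^\otimes_n$ never mix the two factors (Proposition \ref{prop18}), the DG combinatorics of Loday/Shklyarov should carry over essentially verbatim, with $\emm_n$ in place of the differential and the product.

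\textbf{Quasi-isomorphism.} Filter both sides by powers of $u$. These filtrations are complete and exhaustive, and the source $CC_*^-(A)\otimes_{k[[u]]}CC_*^-(B)$ should be read as the completed tensor product. On the associated graded, the differential reduces to $\bee$ and the map reduces to $k$ on each graded piece $u^p\,C_*(A)\otimes C_*(B)$. By Proposition \ref{lem: kb=bk}, $k$ is a quasi-isomorphism, so the map is a quasi-isomorphism on the $E_1$ pages.

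Since the filtrations are complete and bounded in the $u$-direction, the Eilenberg–Moore comparison theorem (\parencite[Theorem 5.5.11]{weibel1994introduction}) then shows that $k+uK$ is a quasi-isomorphism.
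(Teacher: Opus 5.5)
Your overall route is the paper's. You use the same three $u$-coefficient identities: the $u^0$ one from Proposition~\ref{lem: kb=bk}, the $u^2$ one by reducedness, and the $u^1$ one by sorting the terms of $\bee K$ with the commuting-subalgebra relations (the paper's Lemma~\ref{lem:AppendixA}). You then conclude with the $u$-adic filtration. Your filtration step is fine, and more careful than the paper's: you pass to the completed tensor product and use the Eilenberg--Moore comparison for complete exhaustive filtrations, instead of a comparison theorem that needs bounded filtrations. The problem is the term matching you propose for the $u^1$ identity. It misassigns one family of terms and omits the cancellation that actually produces $k\Bee$, so as written it does not close.

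\emph{The adjacency terms match $\Bee k$, not $k\Bee$.} Take the terms of $\bee K$ in which $\barb_0$ is immediately followed by $\bara_0$, so that $\emm_2^\otimes(\barb_0,\bara_0)=\pm a_0\otimes b_0$. They have the form $\ua\otimes\ub\,|\,\operatorname{sh}(\bara^{(2)};\barb^{(2)}),\,a_0\otimes b_0,\,\operatorname{sh}(\bara^{(1)};\barb^{(1)})$. By Lemma~\ref{lemma21}(a) this is the shape of $\Bee k(\bfa;\bfb)$. It is not the shape of $k\Bee$: the terms of $k(\Bee\bfa;\bfb)$ and $k(\bfa;\Bee\bfb)$ have $\ua\otimes b_0$, resp.\ $a_0\otimes\ub$, in slot zero.

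\emph{These are the mixed terms that do not cancel.} The adjacency terms are exactly the mixed $\emm_2^\otimes$ terms that do \emph{not} cancel via (\ref{eq:m2comsub}), because the constraint $\barb_0<\bara_0$ forbids the partner $\emm_2^\otimes(\bara_0,\barb_0)$. So your claim that mixed pairs cancel ``exactly as in the DG case'' needs this exception. These unpaired terms are what cancels $\Bee k$ on the left-hand side; in the paper this is (\ref{eq:17}) against (\ref{eq:14}).

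\emph{Where $k\Bee$ comes from.} The $k\Bee$ terms come from the $\bee_1$ unit terms. The terms $\emm_2^\otimes(\ua\otimes\ub,c_1)\,|\,c_2,\dots,c_N$ and $\emm_2^\otimes(c_N,\ua\otimes\ub)\,|\,c_1,\dots,c_{N-1}$ cancel in pairs, coming from cyclic shuffles that differ by a one-step rotation. The only unpaired ones are those with $c_1=\barb_0$ or $c_N=\bara_0$, since rotating them would break $\barb_0<\bara_0$. Up to sign, these survivors are $\ua\otimes b_0\,|\,\operatorname{sh}(\text{rotations of }\bara_0,\dots,\bara_n;\barb)=k(\Bee\bfa;\bfb)$ and $a_0\otimes\ub\,|\,\operatorname{sh}(\bara;\text{rotations of }\barb_0,\dots,\barb_m)=k(\bfa;\Bee\bfb)$.

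The remaining assignments are as you said: pure blocks through $\bara_0$ or $\barb_0$ give $K\bee_1$, and pure blocks avoiding them give $K\bee_0$. With this corrected matching your sketch becomes the paper's argument in Appendix~\ref{secA.1}.
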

   
\begin{proof}
We first show that it is a chain map. That is: \begin{equation}
       \left( \bee + u\Bee\right) \left(k+uK\right)(\bfa; \bfb) = \left(k+uK\right)\left( (\bee + u\Bee)(\bfa); \bfb \right) + (-1)^{|\bfa|} \left(k+uK\right)\left( \bfa; (\bee + u\Bee)(\bfb) \right)
    \end{equation} 
Comparing coefficients in $u$, this is equivalent to the three equations: 
\begin{align}
    u^0:& \ \bee \ k(\bfa; \bfb) = k\left(\bee (\bfa); \bfb\right) + (-1)^{|\bfa|} k\left(\bfa;\bee(\bfb)\right) \label{a}\\
    u: \ \Bee \ k(\bfa; \bfb) + \bee K(\bfa; & \bfb) =  k\left(\Bee (\bfa); \bfb\right) + K\left(\bee (\bfa); \bfb\right) + (-1)^{|\bfa|} k\left(\bfa;\Bee(\bfb)\right) + (-1)^{|\bfa|} K\left(\bfa;\bee(\bfb)\right) \notag\\
    u^2:& \ \Bee \ K(\bfa; \bfb) = K\left(\Bee (\bfa); \bfb\right) + (-1)^{|\bfa|} K\left(\bfa;\Bee(\bfb)\right) \label{b}
    \end{align}
Equation (\ref{a}) follows from Proposition \ref{lem: kb=bk} since $k$ is a chain map. The left hand side of equation (\ref{b}) vanishes since our chains are reduced. The terms on the right hand side also vanish individually for the same reason since
\begin{align*}
    K\left(\Bee (\bfa); \bfb\right) &= \sum \pm K \left( \ 1 \ | a^{(2)}, a_0, a^{(1)}; \ \barb_0, ...,\barb_m\right) \\
    &= \sum \pm \ 1\otimes1 \ | \ \operatorname{Sh}(1\otimes1, \bara^{(2)}, \bara_0, \bara^{(1)}; \ \barb_0, ...,\barb_m) = 0\\
\text{and } K (\bfa; \Bee(\bfb)) &= \sum \pm \ 1\otimes1 \ | \ \operatorname{Sh}(\bara_0, ...,\bara_n; \ 1\otimes1, \barb^{(2)}, \barb_0, \barb^{(1)}) = 0
\end{align*}
For the coefficient of $u$, this follows from Lemma \ref{lem:AppendixA} below. 
\begin{lemma}[See Appendix \ref{secA.1} for a proof]\label{lem:AppendixA}
    $$\Bee \ k(\bfa; \bfb) + \bee K(\bfa; \bfb) =  k\left(\Bee (\bfa); \bfb\right) + K\left(\bee (\bfa); \bfb\right) + (-1)^{|\bfa|} k\left(\bfa;\Bee(\bfb)\right) + (-1)^{|\bfa|} K\left(\bfa;\bee(\bfb)\right). $$
\end{lemma}

\noindent
To show that it induces an isomorphism in homology, consider the $u$-filtration on the negative cyclic complex of $\AAA$ defined by $$\mathcal{F}^lCC_*^-(\AAA) =  u^l CC_*^-(\AAA)=u^l C_*(A)[[u]]$$ The boundary map $\bee +u\Bee$ restricts to a boundary map on each filtered piece and we have the reverse inclusion $$\ldots \supset \mathcal{F}^{l-1}CC_*^-(\AAA) \supset \mathcal{F}^lCC_*^-(\AAA) \supset \mathcal{F}^{l+1}CC_*^-(\AAA) \supset \ldots, \qquad \text{and }\ \bigcap_l \mathcal{F}^lCC_*^-(A)=0$$
Taking the quotient $E^0 = \mathcal{F}^*CC_*^-(\AAA)/\mathcal{F}^{*+1}CC_*^-(\AAA)$, we have the induced differential as $\bee$ since the term $u\Bee$ increases filtration and hence killed in the quotient. Taking homology we obtain the $E^1$-page of the spectral sequence for the Hochschild homology of $\AAA$. For two $A_\infty$-algebras $\AAA, \BB$ we have the map on $E^1$-pages $$[k]: HH_*(\AAA) \otimes HH_*(\BB) \longrightarrow HH_*(\AAA\otimes_\infty\BB)$$ which is an isomorphism by Proposition \ref{lem: kb=bk}. Hence, by the spectral sequence comparison theorem \parencite[Theorem 5.2.12]{weibel1994introduction}, the result follows.
\end{proof}
We therefore prove our main theorem.
\begin{theorem} \label{mainthm}
     Let $A$ and $B$ be finite dimensional $A_\infty$-algebras. The cyclic Kunneth quasi-isomorphism induces an isomorphism of formal EP-structures on negative cyclic homology.
\end{theorem}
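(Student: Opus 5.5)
Given Proposition \ref{cycqism}, the map $k+uK$ is already a $\mathrm{k}[[u]]$-linear quasi-isomorphism. To show it is an isomorphism of formal EP-structures, it suffices to check two things on homology: compatibility with the higher residue pairings and compatibility with the $u$-connections. The inverse of a bijective pairing- and connection-preserving map automatically preserves both, so it is then also a morphism. On the source, the EP-structure is the tensor product structure: the pairing is $\langle \bfa^1\otimes\bfb^1,\bfa^2\otimes\bfb^2\rangle = (-1)^{|\bfb^1|'|\bfa^2|'}\langle\bfa^1,\bfa^2\rangle_{\text{hres}}\langle\bfb^1,\bfb^2\rangle_{\text{hres}}$, and the connection is $\nabla^A\otimes 1+1\otimes\nabla^B$. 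Hodge-to-de-Rham degeneration for $A$ and $B$, together with the Künneth isomorphism, makes both sides free of finite rank.

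\textbf{Pairings.} Expanding sesquilinearly, we get
\[
\langle (k+uK)(x),(k+uK)(y)\rangle_{\text{hres}}=\langle kx,ky\rangle_{\text{Muk}}+u\bigl(\langle Kx,ky\rangle_{\text{Muk}}-\langle kx,Ky\rangle_{\text{Muk}}\bigr)-u^2\langle Kx,Ky\rangle_{\text{Muk}}.
\]
The $u^0$ term is handled exactly by Theorem \ref{thm: mukai}. For the remaining terms, the key observation is that $K$ always outputs chains of the form $\ua\otimes\ub\,|\,\ldots$. In Sheridan's trace formula (\ref{eq:Mukaipairing}), the zeroth slot is fed into an $\emm^\otimes_*$ together with other entries $\bara_i,\barb_j$ or $c$. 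When the other side is $k(\cdot)$ or $K(\cdot)$, all inputs are again units in one factor. I would use Proposition \ref{prop18} and Theorem \ref{thm:gencommutingsub}, together with strict unitality, to show that every such composite either vanishes or is $\emm_2(\mathbf 1,\cdot)$, i.e. the identity. In the latter case the trace reduces to a trace of an operator that is strictly of the form $\mathbf 1\otimes(\ldots)$ inserted in a reduced chain, and it vanishes. Concretely, I would show $\langle K x, \cdot\rangle_{\text{Muk}}=0$ at chain level for reduced chains, mirroring the argument that $\Bee$-images pair trivially. If this turns out too strong, the fallback is to show that these terms are chain-homotopically zero by exhibiting them as $\langle \Bee(\cdot),\cdot\rangle$-type expressions, using that $\Bee$ is skew for the Mukai pairing.

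\textbf{Connection.} I would follow Shklyarov's DG argument. Write $\nabla=\frac{d}{du}+\frac{\Gamma}{2u}+\frac{\iota\{\emm'\}}{2u^2}$. The $\frac{d}{du}$ term commutes with $k+uK$ up to the extra term $K$. The length operator satisfies $\Gamma k=k(\Gamma\otimes1+1\otimes\Gamma)$ and $\Gamma K=K(\Gamma\otimes 1+1\otimes\Gamma)-2K$, since $K$ adds two tensor slots. This gives
\[
\nabla(k+uK)-(k+uK)\nabla^{\otimes}=\tfrac{1}{2u^2}\bigl(\iota\{\emm'^{\otimes}\}(k+uK)-(k+uK)(\iota\{\emm'^A\}\otimes1+1\otimes\iota\{\emm'^B\})\bigr)+(\text{terms from }d/du,\Gamma),
\]
and the lower-order terms cancel. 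The remaining task is to show that the $\iota\{\emm'\}$ discrepancy is $(\bee+u\Bee)$-exact, with a homotopy divisible by $u^2$. Here I would use the Cartan formula (\ref{cartanmagicform}), with $\emm'$ identified as the Euler-grading derivation of $\emm$. The point is that $\emm'^\otimes$ and $\emm'^A\otimes1+1\otimes\emm'^B$ differ by a Hochschild coboundary: the tree formula of Amorim assigns $\emm^\otimes_n$ built from trees with $|V(U)|+|V(W)|=n$, so the weight $(2-n)$ splits as a sum over vertices. Combined with Proposition \ref{prop18}, this gives the homotopy.

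The main obstacle is this last step: the interplay of $\iota\{\emm'^\otimes\}$ with the cyclic shuffle $K$ for genuinely higher trees, and the careful tracking of signs there. It is the analogue of Lemma \ref{lem:AppendixA}, and I would prove it by a similar appendix-style term-by-term computation that uses Theorem \ref{thm:gencommutingsub} to restrict to the unique surviving trees.
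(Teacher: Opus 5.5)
\textbf{Connection.} This part has a genuine gap. The step you defer is the whole content of the compatibility, and the mechanism you propose for it targets the wrong difficulty.

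\begin{itemize}
\item On the inputs that actually occur (shuffles and cyclic shuffles of $\bara_i$'s and $\barb_j$'s), Proposition \ref{prop18} together with $\emm'_2=0$ already shows that $\emm'^{\otimes}$ acts exactly as $\emm'_A$ on a block of $\bara$'s, as $\emm'_B$ on a block of $\barb$'s, and by zero on mixed blocks. So you never need to compare $\emm'^{\otimes}$ with ``$\emm'^A\otimes 1+1\otimes\emm'^B$'' up to a coboundary.
\item This is how the paper gets $[k,\bee\{\emm'\}]=0$ on the nose. The compositions $\Bee\{\emm'\}\circ K$ and $K\circ\Bee\{\emm'\}$ vanish by reducedness, and the $d/du$ and $\Gamma$ terms cancel as you say.
\item What remains is the mixed term $[k,\Bee\{\emm'\}]+[K,\bee\{\emm'\}]$. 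It is already nonzero at chain level for dg algebras, where $\emm'^{\otimes}=\emm_1^{\otimes}=\emm_1^A\otimes\mathrm{id}+\mathrm{id}\otimes\emm_1^B$ exactly. For instance, $[K,\bee\{\emm'\}](a_0\otimes b_0)=\pm\,\emm_1(a_0)\otimes\ub\,|\,\ua\otimes b_0$.
\item So the obstruction comes from how the contraction interacts with the shuffle and cyclic-shuffle structure, not from the higher trees. It has to be killed by an explicit null-homotopy.
\end{itemize}

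The paper uses $N=H+H'$:
\begin{itemize}
\item $H$ is the Shklyarov-type operator (\ref{eqH}): a cyclic shuffle with $\emm'_A$ or $\emm'_B$ inserted in one factor.
\item $H'=u^{-1}(k+uK)\circ(\iota\{\emm'_A\}\otimes\Bee)$. For this piece, the Cartan formula (\ref{cartanmagicform}) with $[\emm,\emm']=0$ gives $[H',\bee+u\Bee]=-k(\mathcal{L}_{\emm'}\otimes\Bee)$.
\item The remaining identity $[k,\Bee\{\emm'\}]+[K,\bee\{\emm'\}]=[H,\bee]-k(\mathcal{L}_{\emm'}\otimes\Bee)$ is the term-by-term computation of Lemma \ref{appendixlemma}.
\end{itemize}
Without producing such a homotopy, your connection argument is only a plan. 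Also, a homotopy divisible by $u^2$ is more than you need: exactness in $u^{-2}CC^-_*$ suffices, and the paper's $H'$ even carries a factor $u^{-1}$.

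\textbf{Pairing.} The $u^0$ and $u^2$ terms are handled correctly. For the $u^1$ terms, however, your stated reason does not give vanishing.
\begin{itemize}
\item Strict unitality collapses the $\emm^\otimes_*$ containing $\ua\otimes\ub$ to $\emm_2^\otimes$. The other $\emm^\otimes_*$ then receives the entire cyclic shuffle, placed before the first or after the second of the two general inputs $c$ and $a_0\otimes b_0$.
\item Theorem \ref{thm:gencommutingsub} requires such a block to have all $\bara$'s before all $\barb$'s. Every cyclic shuffle has $\barb_0$ before $\bara_0$. This ordering clash is what kills the trace.
\item With the opposite ordering, the theorem produces a generally nonzero $U^A\otimes W^B$, whose trace is a product of traces. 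So ``units in one factor plus reducedness'' cannot suffice.
\item The analogy with $\Bee$-images also fails for $A_\infty$-algebras. One has $\langle \mathbf{1}\otimes a_1,\,b_0\rangle_{Muk}=\pm\operatorname{tr}(c\mapsto\emm_3(a_1,c,b_0))$, which has no reason to vanish once $\emm_3\neq 0$.
\item Hence $\langle Kx,\cdot\rangle_{Muk}=0$ is not a general chain-level fact. It holds against images of $k$ and $K$, and only because of the ordering constraint above.
\end{itemize}
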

   
\begin{proof} We first show that the cyclic K\"unneth map respects the higher residue pairings. Let $\bfa^i = (a_0^i | a_1^i,..., a_n^i) \in C_*(\AAA)$, for $i=1,2$ and similarly define $ \bfb^j=(b_0^j | b_1^j,..., b_m^j) \in  C_*(\BB)$ for $j=1,2.$ 
\begin{align}
    \left\langle  (k+uK)(\bfa^1; \bfb^1),  (k+uK)(\bfa^2; \bfb^2)\right\rangle_{hres} &= \left\langle  k(\bfa^1; \bfb^1),  k(\bfa^2; \bfb^2)\right\rangle_{Muk} \\
    -u\left\langle  k(\bfa^1; \bfb^1),  K(\bfa^2; \bfb^2)\right\rangle_{Muk} &+ u\left\langle  K(\bfa^1; \bfb^1),  k(\bfa^2; \bfb^2)\right\rangle_{Muk} \label{eqcoeffu}\\
    -u^2\left\langle  K(\bfa^1; \bfb^1),  K(\bfa^2; \bfb^2)\right\rangle_{Muk}& 
\end{align}
The coefficient of $u^2$ is readily seen to be zero since
\begin{align}
    \left\langle  K(\bfa^1; \bfb^1),  K(\bfa^2; \bfb^2)\right\rangle_{Muk} &= \sum \left\langle  1\otimes 1|\operatorname{Sh}(\underline{\bfa}^1; \underline{\bfb}^1),  1\otimes 1|\operatorname{Sh}(\underline{\bfa}^2; \underline{\bfb}^2)\right\rangle_{Muk}\notag\\
    =\sum \pm \ tr[c_A\otimes c_B \mapsto &\emm_*^\otimes(\operatorname{Sh}(\underline{\bfa}^1; \underline{\bfb}^1)^{(3)}, 1\otimes 1, \operatorname{Sh}(\underline{\bfa}^1; \underline{\bfb}^1)^{(1)}, \notag\\
    \emm_*^\otimes(\operatorname{Sh}(\underline{\bfa}^1; \underline{\bfb}^1)^{(2)},& c_A\otimes c_B, \operatorname{Sh}(\underline{\bfa}^2; \underline{\bfb}^2)^{(3)}, 1\otimes 1, \operatorname{Sh}(\underline{\bfa}^2; \underline{\bfb}^2)^{(1)}), \operatorname{Sh}(\underline{\bfa}^2; \underline{\bfb}^2)^{(2)})]
\end{align}
By the commuting subalgebras relation of Definition \ref{commutingsubalgebradefn}, the inner $\emm_*^\otimes$ is only non-zero when $*=2$, forcing all the inner $\operatorname{Sh}$ terms to be empty. The same situation applies to the outer $\emm_*^\otimes$ except that all the $\operatorname{Sh}$ terms cannot be simultaneously empty. Thus, the outer $\emm_*^\otimes$ evaluates to zero, and we are taking the (sum of) traces of the zero map, hence zero.\\

For the coefficient of $u$, the terms vanish independently. We show this for the first term, a very similar computation shows this for the second term.
\begin{align}
    \left\langle  k(\bfa^1; \bfb^1),  K(\bfa^2; \bfb^2)\right\rangle_{Muk} &= \sum \pm \ tr[
     c_\AAA \otimes c_\BB \mapsto  \emm_*^\otimes ( \operatorname{sh}(\bara^1;\barb^1)^{(3)}, a_0^1\otimes b_0^1 , \operatorname{sh}(\bara^1;\barb^1)^{(1)}, \notag \\
     \emm_*^\otimes(\operatorname{sh}(\bara^1;\barb^1)^{(2)}, c_\AAA &\otimes c_\BB, \operatorname{Sh}(\bara^2;\barb^2)^{(3)}, 1\otimes 1, \operatorname{Sh}(\bara^2;\barb^2)^{(1)}), \operatorname{Sh}(\bara^2;\barb^2)^{(2)})]. \label{equ^1}
\end{align}
Now the inner $\emm_*^\otimes$ is non-zero except when $*=2$ by Definition \ref{commutingsubalgebradefn}. So we obtain $\emm_2^\otimes(c_\AAA \otimes c_\BB , 1\otimes 1) = \pm\ c_\AAA\otimes c_\BB$ forcing all the inner shuffle and cyclic shuffle terms to be empty. Equation (\ref{equ^1}) now becomes
\begin{align}
= \sum \pm \ tr[
     c_\AAA \otimes c_\BB \mapsto\emm_*^\otimes ( \operatorname{sh}(\bara^1;\barb^1)^{(2)}, a_0^1\otimes b_0^1 , \operatorname{sh}(\bara^1;\barb^1)^{(1)}, c_\AAA \otimes c_\BB, \operatorname{Sh}(\bara^2;\barb^2)].
\end{align}
From Theorem \ref{thm:gencommutingsub}, the $\operatorname{Sh}(\bara^2;\barb^2)$ term must be ordered such that all $\bara's$ appear before any $\barb's$, otherwise we get zero. But by the definition of cyclic shuffles 
we must have $\barb_0^2$ to the left of $\bara_0^2$. Hence, zero by Theorem \ref{thm:gencommutingsub}.

So we have
\begin{equation}
    \left\langle  (k+uK)(\bfa^1; \bfb^1),  (k+uK)(\bfa^2; \bfb^2)\right\rangle_{hres} = \langle \bfa^1, \bfa^2 \rangle_{Muk} \cdot \langle \bfb^1, \bfb^2 \rangle_{Muk}=\langle \bfa^1, \bfa^2 \rangle_{hres} \cdot \langle \bfb^1, \bfb^2 \rangle_{hres} \label{eq:hresk+uK}
\end{equation}
By sesqui-linearity, the general case follows.

    Next, we need to show that $k+uK: C_*(\AAA)[[u]] \otimes C_*(\BB)[[u]] \to C_*(\AAA\otimes_\infty \BB)[[u]]$ is compatible with the $u$-connection defined as \begin{equation}
    \nabla_{\frac{d}{du}} = \frac{d}{du} + \frac{\bee\{\emm'\}}{2u^2} + \frac{\Bee\{\emm'\} - \Gamma}{2u}
\end{equation}
on homology. We compute
\begin{align*}
    \nabla_{\frac{d}{du}} \circ (k+uK) & (u^p \bfa \otimes \bfb) = \left( \frac{d}{du} + \frac{\bee\{\emm'\}}{2u^2} + \frac{\Bee\{\emm'\} - \Gamma}{2u} \right) \circ \left(u^pk(\bfa ; \bfb) + u^{p+1} K(\bfa ; \bfb)\right)\\
    & = u^p \left( (p+1) K(\bfa ; \bfb) + \frac{1}{2} \Bee\{\emm'\} \circ K(\bfa ; \bfb) -\frac{1}{2} \Gamma \circ K(\bfa ; \bfb) \right) \\
    & + u^{p-1} \left( p k(\bfa ; \bfb) + \frac{1}{2} \bee\{\emm'\} \circ K(\bfa ; \bfb) +\frac{1}{2} \Bee\{\emm'\} \circ k(\bfa; \bfb) - \frac{1}{2} \Gamma\circ k(\bfa ; \bfb) \right) \\
    & + u^{p-2} \left( \frac{1}{2} \bee\{\emm'\} \circ k(\bfa; \bfb) \right)
\end{align*}
On the other hand, we have
\begin{align*}
   (k+uK) & \circ \left( \nabla_\frac{d}{du} \otimes \text{id} + \text{id} \otimes \nabla_\frac{d}{du} 
  \right) (u^p \bfa \otimes \bfb)  = (k+uK) (pu^{p-1} \bfa \otimes \bfb ) \\
   & + (k+uK) \left(\frac{u^{p-2}}{2} (\bee\{\emm'\}(\bfa)\otimes \bfb + \bfa \otimes \bee\{\emm'\}(\bfb) )  \right) \\
   & + (k+uK) \left(\frac{u^{p-1}}{2} \left(\Bee\{\emm'\}(\bfa)\otimes \bfb + \bfa \otimes \Bee\{\emm'\}(\bfb) - \Gamma (\bfa) \otimes \bfb - \bfa \otimes \Gamma (\bfb) \right) \right) \\
   & = u^p \left(p K(\bfa ; \bfb) + \frac{1}{2} K(\Bee\{\emm'\}(\bfa) ; \bfb) + \frac{1}{2} K(\bfa ; \Bee\{\emm'\}(\bfb)) -\frac{1}{2} K(\Gamma (\bfa) ; \bfb) -\frac{1}{2} K(\bfa ; \Gamma(\bfb)) \right) \\
   & + u^{p-1} \bigg(p k(\bfa ; \bfb) + \frac{1}{2} K(\bee\{\emm'\}(\bfa) ; \bfb) + \frac{1}{2} K(\bfa ; \bee\{\emm'\}(\bfb)) + \frac{1}{2} k(\Bee\{\emm'\}(\bfa) ; \bfb) \\
   & \qquad \qquad + \frac{1}{2} k(\bfa ; \Bee\{\emm'\}(\bfb)) -\frac{1}{2} k(\Gamma (\bfa) ; \bfb) -\frac{1}{2} k(\bfa ; \Gamma(\bfb)) \bigg) \\
   & + u^{p-2} \bigg( \frac{1}{2} k(\bee\{\emm'\}(\bfa) ; \bfb) + \frac{1}{2} k(\bfa ; \bee\{\emm'\}(\bfb)) \bigg)
\end{align*}
Comparing coefficients, the commutator $\displaystyle [ k+uK, \nabla_{\frac{d}{du}}]$ equals 
\begin{align}
    u^p & \left( -K + \frac{1}{2}[ K,\Gamma ] - \frac{1}{2}[ K, \Bee\{\emm'\}] \right)\\
    +u^{p-1} & \left( \frac{1}{2}[k,  \Gamma] - \frac{1}{2}[ k, \Bee\{\emm'\}] - \frac{1}{2}[ K, \bee\{\emm'\}] \right)\\
    +u^{p-2} & \left( -\frac{1}{2}[ k, \bee\{\emm'\}]\right)
\end{align}
where $\bee\{\emm'\}(-) = \bee^{1,1}(\emm', -)$ and $\Bee\{\emm'\}(-) = \Bee^{1,1}(\emm', -)$.

Clearly, $2K = [K,\Gamma]$ since by definition of the operators, we have
\begin{align}
    K(\Gamma\bfa; \bfb) &= -n \sum 1\otimes 1|\operatorname{Sh}(\underline{\bfa};\underline{\bfb})\\
    K(\bfa; \Gamma\bfb) &= -m \sum 1\otimes 1|\operatorname{Sh}(\underline{\bfa};\underline{\bfb})\\
    \Gamma K(\bfa;\bfb) &= -(n+m+1+1) \sum 1\otimes 1|\operatorname{Sh}(\underline{\bfa};\underline{\bfb})
\end{align}
The coefficient of $u^p$ is therefore zero by the above calculation, and because
every term of the composition $\Bee\{\emm'\}\circ K$ and $K\circ \Bee\{\emm'\}$ is zero since we are working in the reduced complex. We show that the coefficient of $u^{p-2}$ is $0$. 

The first term in the commutator is
\begin{align}
    k\circ &(\bee\{\emm'\} \otimes \text{id}  + \text{id} \otimes \bee\{\emm'\} ) (\bfa \otimes \bfb) = k \bigg( (\bee\{\emm'\}(\bfa); \bfb ) + (-1)^{|\bfa|}(\bfa ; \bee\{\emm'\}(\bfb) ) \bigg) \notag\\
    &= k\bigg( \sum (-1)^{s_\AAA} \emm_*^\AAA (a^{(3)}, \emm'_\AAA(a^{(4)} ), a^{(5)},a_0, a^{(1)}) | a^{(2)} ; b_0 | b^{(1)}\bigg) \notag\\
    & \qquad \qquad + (-1)^{|\bfa|}k\bigg( \sum (-1)^{s_\BB}  a_0 | a^{(1)}; \emm_*^\BB (b^{(3)}, \emm'_\BB(b^{(4)}), b^{(5)},b_0,b^{(1)}) | b^{(2)} \bigg) \notag\\
    &= \sum (-1)^{s_\AAA +|b_0||a^{(2)}|'} \emm_*^\AAA (a^{(3)}, \emm'_\AAA(a^{(4)} ), a^{(5)},a_0, a^{(1)})\otimes b_0 | \operatorname{sh}(\bara^{(2)}; \barb) \notag\\
    & \qquad \qquad + (-1)^{|\bfa|}\sum (-1)^{s_\BB+ t|a^{(1)}|'} a_0 \otimes  \emm_*^\BB (b^{(3)}, \emm'_\BB(b^{(4)}), b^{(5)},b_0,b^{(1)}) | sh(\bara; \barb^{(2)})\label{eqstar}
\end{align}
where $$s_\AAA= (|a_0|'+|a^{(1)}|'+|a^{(2)}|')(|a^{(3)}|'+|a^{(4)}|'+|a^{(5)}|') + |a^{(3)}|'$$ 
$$ s_\BB=(|b_0|'+|b^{(1)}|'+|b^{(2)}|')(|b^{(3)}|'+|b^{(4)}|'+|b^{(5)}|') + |b^{(3)}|'$$ 
$$\text{ and } t=|b^{(3)}|'+|b^{(4)}|'+|b^{(5)}|'+|b_0|'+|b^{(1)}|'$$

In a similar fashion, the second term in the commutator gives
\begin{align*}
     &\bee\{\emm'\}  \circ k(\bfa ; \bfb)  =  \bee\{\emm'\} \bigg( \sum (-1)^{\sigma} a_0 \otimes b_0 | \operatorname{sh} (\bara ; \barb)^{(1)}, \operatorname{sh} (\bara ; \barb)^{(2)}, \operatorname{sh} (\bara ; \barb)^{(3)}, \operatorname{sh} (\bara ; \barb)^{(4)}, \operatorname{sh} (\bara ; \barb)^{(5)}\bigg) \\
     &=  \sum (-1)^{\sigma+\Delta + s_{\otimes}} \emm_*^\otimes \left(\operatorname{sh} (\bara^{(3)} ; \barb^{(3)}), \emm'_\otimes(\operatorname{sh} (\bara^{(4)} ; \barb^{(4)})), \operatorname{sh} (\bara^{(5)} ; \barb^{(5)}),a_0 \otimes b_0, \operatorname{sh} (\bara^{(1)} ; \barb^{(1)})\right) | \operatorname{sh} (\bara^{(2)} ; \barb^{(2)})
\end{align*}
Since $\emm_2'=0$, it follows from the commuting subalgebras relations of Definition \ref{commutingsubalgebradefn} that the argument in $\emm'_\otimes$, namely, $\operatorname{sh} (\bara^{(4)} ; \barb^{(4)})$ must be strictly $\bara^{(4)}$ or strictly $\barb^{(4)}$ to be non-trivial. Similarly, the terms in $\emm_*^\otimes$ must be strictly $\bara's$ or strictly $\barb's$ (but with $a_0\otimes b_0$ fixed), otherwise the sum is zero. The last sum above thus decomposes as
\begin{align}
    \sum &(-1)^{\sigma+\Delta + s_{\otimes}} \emm_*^\otimes \left(\bara^{(3)}, \emm'_A(\bara^{(4)}), \bara^{(5)},a_0 \otimes b_0, \bara^{(1)}\right) | \operatorname{sh} (\bara^{(2)} ; \barb^{(2)}) \notag\\
    + &\sum (-1)^{\sigma+\Delta + s_{\otimes}} \emm_*^\otimes \left(\barb^{(3)}, \emm'_B(\barb^{(4)}), \barb^{(5)},a_0 \otimes b_0, \barb^{(1)}\right) | \operatorname{sh} (\bara^{(2)} ; \barb^{(2)}) \notag\\
    = \sum &(-1)^{\sigma+\Delta + s_{\otimes}+|b_0||a^{(1)}|'} \emm_*^A \left(a^{(3)}, \emm'_A(a^{(4)}), a^{(5)},a_0, a^{(1)}\right) \otimes b_0 | \operatorname{sh} (\bara^{(2)} ; \barb) \notag\\
     + \sum& (-1)^{\sigma+\Delta + s_{\otimes}+|a_0|'(|b^{(5)}|'+|b^{(4)}|'+|b^{(3)}|')} a_0 \otimes \emm_*^B \left(b^{(3)}, \emm'_B(b^{(4)}), b^{(5)},b_0, b^{(1)}\right)| \operatorname{sh} (\bara ; \barb^{(2)}) \label{eqstarstar}
\end{align}
where
$$\sigma= |b_0||a|', \qquad \Delta=\sum_{i=1}^4 |b^{(i)}|'\left(\sum_{j>i}^5 |a^{(j)}|'\right)$$
$$s_\otimes= \left(1+ |a_0|+|b_0|+\sum_{i=1}^2 |a^{(i)}|'+|b^{(i)}|'\right)\left(\sum_{i=3}^5 |a^{(i)}|'+|b^{(i)}|'\right)+  |a^{(3)}|'+|b^{(3)}|'$$
The equations (\ref{eqstar}) and (\ref{eqstarstar}) cancel each other, and we have  $[k, \bee\{\emm'\}]=0$. \\

\noindent
Lastly, for the coefficient of $u^{p-1}$, clearly $[k, \Gamma] = 0$. We will show that $[k, \Bee\{\emm'\}] + [K, \bee\{\emm'\}]$ is homotopic to $0$ via the degree $+1$ homotopy $N=H+H': CC_*^-(\AAA)\otimes CC_*^-(\BB) \to CC_*^-(\AAA\otimes_\infty \BB)$, where $H$ is defined as (Cf. \parencite[Proof of Theorem 2]{shklyarov2016hodge})
\begin{align}
    H(\bfa, \bfb) &= \sum_{\barb_0 < \bara_0} (-1)^{@_\AAA^1 + @_\BB^1 + |a^{(2)}|'} \ \ua\otimes \ub \ | \ \operatorname{sh}\left(\bara^{(2)}, \emm'_\AAA(\bara^{(3)}), \bara^{(4)},\bara_0,\bara^{(1)}; \barb^{(2)},\barb_0,\barb^{(1)}\right) \notag \\
    + &\sum_{\barb_0 < \bara_0} (-1)^{@_\AAA^2 + @_\BB^2 + |\bfa|' + |b^{(2)}|'} \ \ua\otimes \ub \ | \ \operatorname{sh}\left(\bara^{(2)},\bara_0,\bara^{(1)}; \barb^{(2)}, \emm'_\BB(\barb^{(3)}),\barb^{(4)},\barb_0,\barb^{(1)}\right) \label{eqH}
\end{align} 
where $$@_\AAA^1=\left(|a_0|'+|a^{(1)}|'\right)\left(\sum_{i=2}^4|a^{(i)}|'\right), \qquad @_B^1=|b^{(2)}|'\left(|b_0|'+|b^{(1)}|'\right)$$
$$@_\AAA^2=|a^{(2)}|'\left(|a_0|'+|a^{(1)}|'\right), \qquad @_B^2=\left(|b_0|'+|b^{(1)}|'\right)\left(\sum_{i=2}^4|b^{(i)}|'\right).$$
The condition $\barb_0<\bara_0$ means $\barb_0$ appears to the left of $\bara_0$. We define $H'$ as $$H':= u^{-1}\left(k+uK\right)\circ \left(\iota\{\emm'_A\}\otimes \Bee\right)$$
So we need to show that $[k, \Bee\{\emm'\}] + [K, \bee\{\emm'\}] = [H+H', \bee + u\Bee]$. The RHS equals $$[H, \bee+u\Bee] + [H', \bee+u\Bee] = [H,\bee] + [H',\bee+u\Bee]$$ since $[H,\Bee]=0$ (as we are in the reduced complex). We will show that $[H',\bee+u\Bee]=-k(\mathcal{L}_{\emm'}\otimes \Bee)$
\begin{align} 
H'\circ ((\bee+u\Bee)\otimes \text{id} + & \text{id} \otimes  (\bee+u\Bee)) \notag \\
= & -u^{-1}(k+uK)(\iota\{\emm'_A\}(\bee+u\Bee)\otimes \text{id})(\text{id} \otimes \Bee) \\
&+ u^{-1}(k+uK)(\iota\{\emm'_A\}\otimes \text{id})(\text{id} \otimes \Bee\bee)\label{eq43}
\end{align}
\begin{align}
    (\bee+u\Bee)\circ H' &=(\bee+u\Bee)u^{-1}\left(k+uK\right)\circ \left(\iota\{\emm'_A\}\otimes \Bee\right)\\
    &=u^{-1}(k+uK)\left( (\bee+u\Bee)(\iota\{\emm'_A\})\otimes \Bee + \iota\{\emm'_A\}\otimes (\bee+u\Bee)\Bee\right)\\
    &=u^{-1}(k+uK)\circ(\bee+u\Bee)(\iota\{\emm'_A\}\otimes \text{id})\circ(\text{id} \otimes \Bee)\\
    &\qquad + u^{-1}(k+uK)(\iota\{\emm'_A\}\otimes \text{id})(\text{id} \otimes \bee\Bee) \label{eq41}
\end{align}
Now, terms in (\ref{eq41}) and (\ref{eq43}) cancel out since $\Bee\bee + \bee\Bee=0$, and we are left with
\begin{align}
    [H', \bee+u\Bee] &= u^{-1}(k+uK)\circ(\left[\bee+u\Bee, \ \iota\{\emm'_A\}\right]\otimes \text{id})(\text{id}\otimes \Bee)\\
    &= -u^{-1}(k+uK)(u\mathcal{L}_{\emm'} \otimes \text{id})(\text{id}\otimes \Bee) \label{eq45}\\
    &= -(k+uK)(\mathcal{L}_{\emm'} \otimes \Bee)\\
    &= -k(\mathcal{L}_{\emm'} \otimes \Bee)
\end{align}
where equation (\ref{eq45}) follows from the Cartan homotopy formula (\ref{cartanmagicform}), and since $[\emm, \emm']=0$

Putting it all together, we need to prove the lemma below.
\begin{lemma}[See Appendix \ref{secA.2} for a proof]\label{appendixlemma}
    $[k, \Bee\{\emm'\}] + [K, \bee\{\emm'\}] = [H,\bee] - k(\mathcal{L}_{\emm'} \otimes \Bee).$
\end{lemma}
The result follows.
\end{proof}

\section{Applications to splittings of the Hodge filtration}
\subsection{Weak Calabi-Yau structure and duality isomorphism}
Let $A$ be finite-dimensional, smooth and proper $A_\infty$ algebra over a field $\mathrm{k}$ of characteristic zero. We will generalize Amorim-Tu \parencite[Theorem 2.4]{amorim2022categorical}'s duality isomorphism $D:HH^*(A)\to HH_*(A)$ for cyclic $A_\infty$ structures to weak proper (right) Calabi-Yau structures. We recall some well-known facts about $A_{\infty}$-bimodules, following \parencite[Section 2.1]{amorim2022morita}.
\begin{definition}
    An $A_{\infty}$-bimodule over $A$ (or $A-A$ bimodule) consists of a graded vector space $P$ and a family of maps
$$
\mathfrak{n}_{r, s}: A[1]^{\otimes r} \otimes P \otimes A[1]^{\otimes s} \rightarrow P[1]
$$
satisfying$$
\begin{aligned}
& \sum(-1)^{\star} \mathfrak{n}_{r, s}\left(\mathbf{x}^{(1)}, \mathfrak{m}_i\left(\mathbf{x}^{(2)}\right), \mathbf{x}^{(3)}, \underline{p}, \mathbf{y}\right) \\
& +\sum(-1)^{\star} \mathfrak{n}_{r, s}\left(\mathbf{x}, \underline{p}, \mathbf{y}^{(1)}, \mathfrak{m}_i\left(\mathbf{y}^{(2)}\right), \mathbf{y}^{(3)}\right) \\
& +\sum(-1)^{\star} \mathfrak{n}_{r, s}\left(\mathbf{x}^{(1)}, \mathfrak{n}_{r, s}\left(\mathbf{x}^{(2)}, \underline{p}, \mathbf{y}^{(1)}\right), \mathbf{y}^{(2)}\right)=0
\end{aligned}
$$
where $\star$ is the sum of the degrees to the left of the second multilinear map. For example, in the second line above, $\star=\left|x_1\right|^{\prime}+\ldots+\left|x_r\right|^{\prime}+\left|y_1\right|^{\prime}+\ldots+\left|y_{j-1}\right|^{\prime}+|p|$.
\end{definition}
$A_{\infty}$-bimodules form a dg-category $[A,A]$. Given $A_{\infty}$-bimodules $P$ and $Q$ an element $\rho \in \operatorname{hom}_{[A,A]}^d(P,Q)$ of degree $d$, called a pre-homomorphism consists of maps
$$
\rho_{r, s}: A[1]^{\otimes r} \otimes P  \otimes A[1]^{\otimes s} \rightarrow Q[d]
$$
The differential in $h o m_{[A,A]}^{\bullet}(P,Q)$ is defined as

$$
\begin{aligned}
(\partial \rho)_{r, s}(\mathbf{x}, \underline{p}, \mathbf{y})= & \sum(-1)^{|\rho| \star} \mathfrak{n}_{r_1, s_1}\left(\mathbf{x}^{(1)}, \underline{\rho_{r_2, s_2}\left(\mathbf{x}^{(2)}, \underline{p}, \mathbf{y}^{(1)}\right)}, \mathbf{y}^{(2)}\right) \\
& +\sum(-1)^{1+|\rho|+\star} \rho_{r_1, s_1}\left(\mathbf{x}^{(1)}, \underline{\mathfrak{n}_{r_2, s_2}\left(\mathbf{x}^{(2)}, \underline{p}, \mathbf{y}^{(1)}\right)}, \mathbf{y}^{(2)}\right) \\
& +\sum(-1)^{1+|\rho|+\star} \rho_{r_1, s}\left(\mathbf{x}^{(1)} \mathfrak{m}_i\left(\mathbf{x}^{(2)}\right) \mathbf{x}^{(3)}, \underline{p}, \mathbf{y}\right) \\
& +\sum(-1)^{1+|\rho|+\star} \rho_{r, s_1}\left(\mathbf{x}, \underline{p}, \mathbf{y}^{(1)} \mathfrak{m}_i\left(\mathbf{y}^{(2)}\right), \mathbf{y}^{(3)}\right)
\end{aligned}
$$

\begin{definition}
    Given $A_{\infty}$-bimodules $P$ and $Q$, a pre-homomorphism $\rho \in h o m_{[A,A]}^{\bullet}(P,Q)$ with $\partial \rho=0$ is called a homomorphism. A homomorphism is called a quasi-isomorphism if $\rho_{0,0}: P \rightarrow Q$ induces an isomorphism on cohomology.
\end{definition}

We define two important bimodules for our work.
\begin{definition}
    Let $A_\Delta$ be the bimodule defined as $A[1]$, with operations
$$
\mathfrak{n}_{r, s}(\mathbf{x}, \underline{p}, \mathbf{y}):=\mathfrak{m}_{r+s+1}(\mathbf{x}, p, \mathbf{y})
$$
We will refer to $A_\Delta$ as the (shifted) diagonal bimodule.
\end{definition}
\begin{definition}
    Let $A_\Delta^{\vee}$ be the bimodule defined as $A^{\vee}[1]$, where $\vee$ stands for the $\mathrm{k}$-linear dual. We define the operations
$$
\mathfrak{n}_{r, s}^{\vee}(\mathbf{x}, \underline{\pi}, \mathbf{y})(p):=(-1)^{|\mathbf{x}|^{\prime}+|\mathbf{y}|^{\prime}+|p|^{\prime}} \pi\left(\mathfrak{m}_{r+s+1}(\mathbf{y}, p, \mathbf{x})\right)
$$
We will refer to $A_\Delta^{\vee}$ as the (shifted) dual diagonal bimodule.
\end{definition}
In order to define the Calabi-Yau structure we need the following lemma.
\begin{lemma}[{cf. \parencite[Lemma 2.11]{amorim2022morita}}]\label{lem:weakCY}
There is a quasi-isomorphism of chain complexes
$$
\Psi: C_{*}(A)^{\vee} \rightarrow \operatorname{hom}_{[A,A]}^{*}\left(A_\Delta, A_\Delta^{\vee}\right)
\vspace{-0.5cm}
$$
given explicitly as
$$
\Psi(\varphi)(\boldsymbol{x}, \underline{v}, \boldsymbol{y})(w)=\sum(-1)^@ \varphi\left(\mathfrak{m}\left(\boldsymbol{y}^{(3)}, w, \boldsymbol{x}, v, \boldsymbol{y}^{(1)}\right), \boldsymbol{y}^{(2)}\right)
$$
\end{lemma}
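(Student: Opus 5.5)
Since the statement is cited from Amorim--Tu, the plan is to reprove it directly in the conventions of this paper. The proof has three steps: check that $\Psi(\varphi)$ is a bimodule pre-homomorphism of the right degree; show $\Psi$ intertwines the differentials; and show it is a quasi-isomorphism by a filtration argument.

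\textbf{Well-definedness and the chain map property.} First, for $\varphi\in C_*(A)^\vee$, the expression $\varphi(\mathfrak{m}(\boldsymbol{y}^{(3)},w,\boldsymbol{x},v,\boldsymbol{y}^{(1)}),\boldsymbol{y}^{(2)})$ is a Hochschild chain $a_0|a$ with $a_0=\mathfrak{m}(\cdots)$ and $a=\boldsymbol{y}^{(2)}$. So $\Psi(\varphi)_{r,s}$ is a multilinear map $A[1]^{\otimes r}\otimes A[1]\otimes A[1]^{\otimes s}\to A^\vee[1]$. Strict unitality makes it descend to reduced chains: inserting $\ua$ in $\boldsymbol{y}^{(2)}$ vanishes by reducedness.

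Next, compute $\partial\Psi(\varphi)$ using the differential on $\hom_{[A,A]}$ with $P=A_\Delta$ and $Q=A_\Delta^\vee$. This gives four families of terms:
\begin{itemize}
\item $\mathfrak{n}^\vee\circ\Psi(\varphi)$: the dual-diagonal action, which cyclically rotates $w$ past the inputs and applies $\mathfrak{m}$;
\item $\Psi(\varphi)\circ\mathfrak{n}$ for the diagonal bimodule;
\item the two terms with $\mathfrak{m}_i$ applied inside $\boldsymbol{x}$ or $\boldsymbol{y}$.
\end{itemize}
Compare these with $\Psi(\varphi\circ\bee)$, where $\bee=\bee_0+\bee_1$. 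The terms of $\bee_0$, which insert $\mathfrak{m}$ into $\boldsymbol{y}^{(2)}$, match the inner $\mathfrak{m}_i$ terms on $\boldsymbol{y}^{(2)}$. The terms of $\bee_1$, which wrap $\mathfrak{m}$ around the zeroth slot, produce $\mathfrak{m}(\ldots,\mathfrak{m}(\boldsymbol{y}^{(3)},w,\boldsymbol{x},v,\boldsymbol{y}^{(1)}),\ldots)$. By the $A_\infty$ relation for $\mathfrak{m}$ applied to the whole string $\boldsymbol{y}^{(3)},w,\boldsymbol{x},v,\boldsymbol{y}^{(1)}$, these nested compositions are rewritten as the remaining terms of $\partial\Psi(\varphi)$: inner operations on $\boldsymbol{x}$, on $v$ (the $\mathfrak{n}$ term), on $w$ (the $\mathfrak{n}^\vee$ term), and on parts of $\boldsymbol{y}$. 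I would organize this by where the inner $\mathfrak{m}$ sits relative to the distinguished inputs $w$ and $v$. The signs should be checked with the Koszul rule, with $@$ chosen as the Koszul sign of the cyclic rotation bringing $\boldsymbol{y}^{(3)},w$ to the front.

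\textbf{Quasi-isomorphism.} Filter both sides by length: on $C_*(A)^\vee$ by the dual of $\mathcal{F}^l C_*(A)$, and on $\hom_{[A,A]}$ by $r+s$. The filtrations are complete and exhaustive, with $\hom$ a product over $r,s$. Passing to a minimal model, assume $\mathfrak{m}_1=0$. The associated graded pieces then carry only the $\mathfrak{m}_2$ parts, and $\Psi$ reduces to the classical identification
\[
\bigl(A\otimes_{A^e}\mathrm{Bar}\bigr)^\vee\cong\hom_{A^e}(\mathrm{Bar}(A),A^\vee),
\]
namely $\hom$-tensor adjunction $\hom_{A^e}(M,N^\vee)\cong(N\otimes_{A^e}M)^\vee$ applied to the reduced bar resolution. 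This identification holds on the nose already at $E_1$ by the cyclic reindexing. A comparison theorem for complete filtrations (Weibel 5.5.10 type) then gives the result.

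\textbf{Main obstacle.} The main obstacle is the sign bookkeeping in the chain-map step, especially matching the $\mathfrak{n}^\vee$ term, whose sign $(-1)^{|\mathbf{x}|'+|\mathbf{y}|'+|p|'}$ comes from dualization, with the rotated $\bee_1$ terms. I would first verify the case $r=s=0$, then argue the general case by tracking cyclic rotation signs.
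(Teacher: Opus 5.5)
\textbf{There is a gap in the quasi-isomorphism step.} Your chain-map step is the standard argument and is fine in outline. Split $\partial\Psi(\varphi)-\Psi(\varphi\circ\mathfrak{b})$ according to where the inner $\mathfrak{m}$ sits relative to $w$ and $v$, and apply the $A_\infty$ relation to the string $\boldsymbol{y}^{(3)},w,\boldsymbol{x},v,\boldsymbol{y}^{(1)}$. This does match the $\mathfrak{b}_1$ terms with the $\mathfrak{n}^\vee$, $\mathfrak{n}$ and inner-$\mathfrak{m}_i$ terms. The paper gives no proof of its own here; it only cites Amorim--Tu.

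The problem is that the target is not $\hom_{A^e}(\mathrm{Bar}(A),A^\vee)$. A pre-homomorphism $A_\Delta\to A_\Delta^\vee$ has components indexed by two lengths $r$ and $s$. So at the $\mathfrak{m}_2$-level the target is $\hom_{A^e}(\mathrm{Bar}\otimes_A\mathrm{Bar},A^\vee)$, whereas $C_*(A)^\vee\cong\hom_{A^e}(\overline{\mathrm{Bar}},A^\vee)$ uses a single, reduced bar resolution. No reindexing makes these isomorphic at $E_0$ or $E_1$. For $A=\mathrm{k}$, $C_*(A)^\vee=\mathrm{k}$ is concentrated in length $0$, while the length-$l$ graded piece of the hom complex is $\prod_{r+s=l}\mathrm{k}$, of dimension $l+1$. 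So the comparison theorem cannot be invoked at $E_1$ as you propose. Also, for the length filtration the associated graded only sees $\mathfrak{m}_1$; $\mathfrak{m}_2$ enters as $d_1$.

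\textbf{The missing idea is a comparison of resolutions, which gives an isomorphism only at $E_2$.}
\begin{enumerate}
\item Check that $\Psi$ preserves the filtrations: $\Psi(\varphi)_{r,s}$ only sees $\varphi$ on chains of length $\le s$.
\item On the components with $r+s=l$, the leading part of $\Psi$ is $\varphi\mapsto\bigl[(v,\boldsymbol{y})(w)\mapsto\pm\varphi(\mathfrak{m}_2(w,v)|\boldsymbol{y})\bigr]$ for $r=0$, and zero for $r>0$.
\item This is precomposition with $\epsilon\otimes\mathrm{id}\colon\mathrm{Bar}\otimes_A\mathrm{Bar}\to A\otimes_A\mathrm{Bar}=\mathrm{Bar}$, followed by projection to $\overline{\mathrm{Bar}}$. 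At the $E_1$ level this is a map of projective resolutions of $H(A)$ over $H(A)^e$ covering the identity. Hence it becomes a quasi-isomorphism after applying $\hom_{H(A)^e}(-,H(A)^\vee)$.
\item So $\Psi$ is an isomorphism on $E_2$, and the Eilenberg--Moore comparison theorem for complete, exhaustive filtrations finishes the proof.
\end{enumerate}
Equivalently, identify $\hom_{[A,A]}(A_\Delta,A_\Delta^\vee)$ with the dual of the two-pointed complex $\bigoplus_{r,s}A\otimes A[1]^{\otimes r}\otimes A\otimes A[1]^{\otimes s}$. Then $\Psi$ is the dual of the map $(w,\boldsymbol{x},v,\boldsymbol{y})\mapsto\sum\pm\mathfrak{m}(\boldsymbol{y}^{(3)},w,\boldsymbol{x},v,\boldsymbol{y}^{(1)})|\boldsymbol{y}^{(2)}$, which is induced by the multiplication quasi-isomorphism $A_\Delta\otimes_A A_\Delta\to A_\Delta$.

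Finally, the passage to a minimal model is unnecessary. As written it would also require naturality of $\Psi$ under $A_\infty$ quasi-isomorphisms, which you do not address. Over a field you can simply take $\mathfrak{m}_1$-cohomology to reach $E_1$.
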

We are now able to define a weak Calabi-Yau structure on $A$.
\begin{definition} \label{defnWCYstr}
    An element $\phi \in H H_{*}(A)^{\vee}$ is called non-degenerate if $\Psi(\phi)$ is a quasi-isomorphism.

A weak Calabi-Yau structure on $A$ is a non-degenerate element $\phi \in H H_{*}(A)^{\vee}$.
\end{definition}
Notice that the non-degeneracy of the Mukai pairing implies that the map $\langle-,-\rangle_{Muk}: HH_*(A)\to HH_*(A)^\vee$ defined by $\alpha\mapsto \langle \alpha,-\rangle_{Muk}$ is an isomorphism. Hence, we can write $\phi=\langle \omega,-\rangle_{Muk}$ for some unique $\omega\in HH_*(A)$.

Assume further that $A$ is minimal, i.e., $\emm_1=0$. Let $\phi \in H H_{*}(A)^{\vee}$ be a weak Calabi-Yau structure on $A$. That is, the map $\Psi(\phi):A_\Delta \to A_\Delta^\vee$ is an isomorphism. This means for every $u\in A_\Delta$, there exists $v\in A_\Delta$ such that
$$
\Psi(\phi)_{0,0}(u)(v) = (-1)^{|v|'|u|'}\phi(\emm_2(v,u))=(-1)^{|v|'|u|'}\langle\omega, \emm_2(v,u)\rangle_{Muk} \neq 0
$$

Let $\rho \in hom_{[A,A]}(A_\Delta,A_\Delta^\vee[d])$ denote a chain-level representative of $\Psi(\phi)$. 

As in \parencite[Definition 2.28]{amorim2022morita} we define the contraction map $\iota_{-} \rho: C^{*}(A) \rightarrow C_{*+d}(A)^{\vee}$
as follows
$$
\iota_Z \rho\left(a_0 \otimes a_1 \cdots \otimes a_n\right)=\sum(-1)^{@+\dagger} \rho\left(a_1, \ldots \underline{Z\left(a_i, \ldots, a_j\right)} \ldots a_n\right)\left(a_0\right) . \label{eqiota}
$$ $\dagger:=|Z|^{\prime}\left(|\rho|+\left|a_1\right|^{\prime}+\ldots+\left|a_{i-1}\right|^{\prime}\right)$
and $Z \in C^{*}(A)$ is a Hochschild cochain.

Further, we have the Mukai pairing defined in (\ref{eq:Mukaipairing}).
Since $\AAA$ is smooth and proper, the Mukai pairing is non-degenerate, hence it induces an isomorphism $H H_{*}(\AAA) \cong H H_{*}(\AAA)^{\vee}$. Putting together, we obtain a chain of isomorphisms:

\begin{equation}
    HH^*(A) \ \xrightarrow{\iota_-\rho} \ HH_{*}(A)^\vee \xleftarrow{\langle-,-\rangle_{Muk}}\ HH_*(A) \label{eq D}
\end{equation}

\begin{lemma}[{cf. \parencite[Proposition A.1]{amorim2022categorical}}]\label{appA.1}
    Given a Hochschild cochain class $\theta\in HH^*(A)$ and chains $\alpha,\beta \in HH_*(A)$, we have the identity $$\langle\theta\cap\alpha,\beta\rangle_{Muk}=(-1)^{|\phi||\alpha|}\langle\alpha,\theta\cap\beta\rangle_{Muk}$$
\end{lemma}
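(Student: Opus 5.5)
The plan is to prove the identity at the chain level, up to Hochschild boundaries, using Sheridan's trace formula (\ref{eq:Mukaipairing}) for the Mukai pairing. Throughout I will read the sign in the statement as $(-1)^{|\theta||\alpha|}$, which is what a graded commutation of $\theta$ past $\alpha$ should produce. The conceptual picture I will follow is the following. In (\ref{eq:Mukaipairing}) the pairing $\langle \bfa,\bfb\rangle_{Muk}$ is the supertrace of the endomorphism $c\mapsto \emm_*(\ldots,a_0,\ldots,\emm_*(\ldots,c,\ldots,b_0,\ldots),\ldots)$ of $A$. This endomorphism is obtained by letting $\bfa$ and $\bfb$ act on the diagonal bimodule $A_\Delta$ from the two sides. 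A Hochschild cocycle $\theta$ represents a bimodule endomorphism of $A_\Delta$. Capping with $\theta$ amounts to composing with this endomorphism on the $\bfa$-side or on the $\bfb$-side, and the graded cyclicity of the trace should move it from one side to the other at the cost of the Koszul sign $(-1)^{|\theta||\alpha|}$.

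Concretely, the steps are as follows.

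First, write out $\theta\cap\bfa=(-1)^{|\theta|}\bee^{1,1}(\theta,\bfa)$ using (\ref{signbee}). Substitute this into (\ref{eq:Mukaipairing}). The outer operation then has the form $\emm_*(a^{(3)},\theta(a^{(4)}),a^{(5)},a_0,a^{(1)},\ldots)$, and the remaining tensor factors $a^{(2)}$ feed into the inner $\emm_*$ together with $c$ and $\bfb$. Do the same for $\langle\bfa,\theta\cap\bfb\rangle_{Muk}$, where $\theta$ now sits inside the inner $\emm_*$ next to $b_0$.

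Second, introduce the auxiliary trace expression $T(\bfa,\bfb)$. This is the sum over all positions of $\theta$ inserted on a consecutive block of the inputs surrounding $c$, with $\theta$ landing in either the outer or the inner $\emm_*$. Expand the cocycle equation $[\emm,\theta]=0$ inside $T$, together with the $A_\infty$ relations applied to the nested pair $\emm_*(\ldots\emm_*(\ldots))$. The resulting identity should take the form
\begin{align*}
T(\bee\bfa,\bfb)\pm T(\bfa,\bee\bfb) &= \langle\theta\cap\bfa,\bfb\rangle_{Muk}-(-1)^{|\theta||\bfa|}\langle\bfa,\theta\cap\bfb\rangle_{Muk} \\
&\quad + (\text{terms of the form } \mathrm{tr}[\emm_1\circ F]\pm\mathrm{tr}[F\circ\emm_1]).
\end{align*}
For the last group of terms I would first pass to the minimal model, so that $\emm_1=0$, as the paper already does for finite-dimensional $A$. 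Alternatively, these terms vanish by cyclicity of the supertrace, $\mathrm{tr}[F\circ G]=(-1)^{|F||G|}\mathrm{tr}[G\circ F]$.

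Third, insertions of $\theta$ that contain $c$ are the ones that transport $\theta$ from the outer operation to the inner one. These are exactly the terms where the graded cyclicity of the trace is used: rewriting $\mathrm{tr}[c\mapsto X(\theta(\ldots c\ldots))]$ as $\pm\mathrm{tr}[c\mapsto \theta(\ldots X(c)\ldots)]$ is what converts a $\theta$ adjacent to $a_0$ into a $\theta$ adjacent to $b_0$. Strict unitality and reducedness of $\theta$ kill the degenerate insertions involving $\ua$. Restricting to cycles $\bfa,\bfb$ then gives the identity in homology.

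The main obstacle is the second and third steps: organising the many insertion terms so that they cancel in pairs, and tracking the signs, in particular showing that the net sign is exactly $(-1)^{|\theta||\alpha|}$. If the explicit bookkeeping becomes unmanageable, I would switch to the conceptual route. Via Lemma \ref{lem:weakCY} and Shklyarov's description, identify the Mukai pairing with the composite $HH_*(A)\otimes HH_*(A)\to HH_*(A\otimes A^{op})\to \mathrm{k}$, where the last map is given by the Chern character of $A_\Delta$. Then use that $HH^*(A)\cong \mathrm{End}_{[A,A]}(A_\Delta)$ acts compatibly on both factors, so the identity reduces to naturality of the Chern character under bimodule endomorphisms. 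This is essentially the argument of \parencite[Proposition A.1]{amorim2022categorical}, with the cyclic pairing replaced by the trace formula.
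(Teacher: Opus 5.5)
The paper gives no argument for this lemma. It is quoted (``cf.'') from \parencite[Proposition A.1]{amorim2022categorical} and then used as a black box in Proposition~\ref{poincare}. So your chain-level argument would replace a citation rather than compete with a proof in the paper. That is worth having: the paper itself describes the cited work as treating cyclic structures, whereas the statement is needed here for a finite-dimensional, strictly unital algebra with only a weak Calabi--Yau structure.

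Your reading of the sign as $(-1)^{|\theta||\alpha|}$ is correct. The $\phi$ in the statement is the cochain, as its use with $\alpha=D(\psi)$ in Proposition~\ref{poincare} confirms. Your strategy is the right kind of argument: a homotopy built from Sheridan's trace formula, with $\delta\theta=0$, the $A_\infty$ relations for the nested pair, and graded cyclicity of the trace doing the work. It is the same mechanism the paper uses for Proposition~\ref{poincare} via $\iota_{\psi,\phi}$.

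As written, however, this is a plan rather than a proof. $T$ is never pinned down, and the identity in your second step, which carries the whole content, is only asserted. The natural choice is to let $T$ consist of the terms in which $\theta$ straddles the cut through $c$:
\[
T(\bfa,\bfb)=\sum\pm\,\mathrm{tr}\Big[c\mapsto\emm_*\big(a^{(4)},a_0,a^{(1)},\emm_*(a^{(2)},\theta(a^{(3)},c,b^{(3)}),b^{(4)},b_0,b^{(1)}),b^{(2)}\big)\Big].
\]
By cyclicity this is the same as $\theta$ applied to the output of the outer operation.

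Also, your first step undercounts. $\langle\theta\cap\bfa,\bfb\rangle_{Muk}$ involves three operations, not two. The cap product's $\emm_*$ becomes the new zeroth entry inside the outer operation, and the leftover $a^{(2)}$ is split again across the outer and inner ones.

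More importantly, the identity must hold exactly on pairs of cycles, where $T$ contributes nothing. So part of the transport of $\theta$ is an exact trace identity, and it needs the commutation of the left and right actions, not cyclicity alone. The simplest case shows this. Take an associative algebra, a derivation $\theta$, write $L_x,R_y$ for left and right multiplication, and consider the cycles $1|x$ and $y$. Then $\langle\theta\cap(1|x),y\rangle_{Muk}=\pm\mathrm{tr}(L_{\theta(x)}R_y)$ while $\langle 1|x,\theta\cap y\rangle_{Muk}=0$. The vanishing of the first comes from $L_{\theta(x)}=[\theta,L_x]$ together with $\mathrm{tr}(L_x\theta R_y)=\mathrm{tr}(\theta R_yL_x)=\mathrm{tr}(\theta L_xR_y)$, and the last equality uses $[L_x,R_y]=0$. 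In the $A_\infty$ case this commutation holds only up to the higher $\emm_k$, and those are exactly the terms your third step has to organise and sign-check.

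Your conceptual fallback is likewise incomplete. That $\theta\otimes 1$ and $1\otimes\theta^{op}$ act on $A_\Delta$ through the same endomorphism does not by itself show that the trace map $HH_*(A\otimes A^{op})\to\mathrm{k}$ intertwines them. That is a Lefschetz-type compatibility requiring proof, in addition to compatibility of the K\"unneth map with cap products and the identification of Sheridan's chain-level formula with Shklyarov's pairing.
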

We state our main proposition. This is a generalization of \parencite[See Theorem 2.4, and the Appendices A.1 and A.2]{amorim2022categorical} which shows this for cyclic structures as opposed to Calabi-Yau structures.
\begin{proposition} \label{poincare} The isomorphism
   $ D: HH^*(A) \longrightarrow HH_*(A)$ in equation (\ref{eq D}) is an isomorphism of modules. That is, $$D(\phi \cup \psi) = \phi \cap D(\psi)$$ for Hochschild cochains $\phi, \psi \in HH^*(A)$
\end{proposition}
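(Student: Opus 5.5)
We reduce the identity $D(\phi\cup\psi)=\phi\cap D(\psi)$ to a statement about pairings, using the non-degeneracy of the Mukai pairing. By definition of $D$ in (\ref{eq D}), $D(\psi)$ is the unique class with $\langle D(\psi),\beta\rangle_{Muk}=\iota_\psi\rho(\beta)$ for all $\beta\in HH_*(A)$. So it suffices to show, for every $\beta$,
\[
\iota_{\phi\cup\psi}\rho(\beta)=\pm\langle \phi\cap D(\psi),\beta\rangle_{Muk}.
\]
By Lemma \ref{appA.1}, the right-hand side equals $\pm\langle D(\psi),\phi\cap\beta\rangle_{Muk}=\pm\,\iota_\psi\rho(\phi\cap\beta)$. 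Hence the proposition reduces to a cohomology-level identity that no longer mentions the Mukai pairing:
\[
\iota_{\phi\cup\psi}\rho\;=\;\pm\,\iota_\psi\rho\circ(\phi\cap -)\quad\text{on }HH_*(A).
\]

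To prove this identity, first use Lemma \ref{lem:weakCY}: since $\rho$ is, up to homotopy, $\Psi(\varphi)$ for a cycle $\varphi\in C_*(A)^\vee$ representing the weak Calabi-Yau class, write $\iota_Z\rho$ explicitly. Substituting the formula for $\Psi$ into the definition of $\iota_Z\rho$ gives
\[
\iota_Z\rho(\bfa)=\sum\pm\,\varphi\bigl(\emm(\ldots,a_0,\ldots,Z(\ldots),\ldots)\,|\ldots\bigr).
\]
This is exactly $\pm\varphi(\bee\{Z\}(\bfa))=\pm\varphi(Z\cap\bfa)$, up to the terms in which $Z$ is not enclosed in the outer $\emm$. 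Those remaining terms should assemble into $\varphi$ applied to $\bee(\cdots)$, or into $\delta Z$-terms. Granting this, $\iota_Z\rho=\pm\,\varphi\circ(Z\cap-)$ on homology. The desired identity then becomes $(\phi\cup\psi)\cap\beta=\pm\,\psi\cap(\phi\cap\beta)$ in $HH_*(A)$, which is the statement, recalled in Section 2, that the cap product makes $HH_*(A)$ a module over $(HH^*(A),\cup)$. Alternatively, as in Amorim–Tu's Appendix A, one can prove it directly with a chain homotopy built from the two-insertion operator $\bee^{1,1}$-type map that contracts $\phi$ and $\psi$ into a single outer $\emm$.

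The main obstacle is the second step, namely identifying $\iota_Z\rho$ with $\varphi\circ(Z\cap-)$ at the level of homology. In the formula for $\iota_Z\rho$, the cochain $Z$ is inserted among the $a_i$ but may sit outside the $\emm$ produced by $\Psi$. These extra terms are not obviously exact. I would handle them by exhibiting an explicit homotopy $h_Z(\bfa)=\varphi(\ldots)$ with $Z$ in the tensor tail. I would then check that $\bee^\vee h_Z\pm h_Z\bee=\iota_Z\rho\mp\varphi(Z\cap-)$ modulo $\iota_{\delta Z}$, using $\varphi\circ\bee=0$ and the $A_\infty$ relations. This mirrors the cyclic case of Amorim–Tu, except that cyclic symmetry is replaced by the closedness of $\varphi$.

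The sign bookkeeping, in particular the factor $(-1)^{|\phi||\alpha|}$ from Lemma \ref{appA.1}, should be tracked throughout so that the two sides agree exactly and not just up to sign.
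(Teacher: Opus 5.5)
Your opening reduction is the paper's: via the definition of $D$ and Lemma~\ref{appA.1}, both sides become $\iota_{\phi\cup\psi}\rho(\beta)$ and $\pm\iota_\psi\rho(\phi\cap\beta)$. After that the routes differ.

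The paper never unpacks $\rho$. It defines the two-insertion operator $\iota_{\psi,\phi}(\theta)$ and expands $\iota_{\psi,\phi}(\partial\rho)$:
\begin{itemize}
\item the $\mathfrak{n}^\vee$-term of $\partial\rho$ gives $\iota_\psi\rho(\phi\cap\bfa)$;
\item the diagonal-bimodule term gives $\iota_{\psi\cup\phi}\rho(\bfa)$;
\item after using $\delta\psi=\delta\phi=0$, the remaining terms assemble into $\iota_{\psi,\phi}(\rho)(\bee\bfa)$.
\end{itemize}
Closedness of $\rho$ and $\bfa$ then finishes the argument. This is the ``alternative'' you mention at the end of your second paragraph.

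Your main route is genuinely different and it works. You prove a one-insertion statement, $\iota_Z\rho=\pm\varphi\circ(Z\cap-)$ on homology, and then quote the $HH^*(A)$-module structure of $HH_*(A)$ together with graded commutativity of $\cup$. Combined with Lemma~\ref{appA.1}, this yields the stronger formula $D(Z)=\pm Z\cap\omega$. The proposition, Lemma~\ref{lemma40} and Corollary~\ref{cor40} all follow from that formula at once. The cost is that you rely on the cited module structure rather than proving it. Your argument also uses the specific form $\rho=\Psi(\varphi)$, whereas the paper's works for any closed bimodule map $\rho$.

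Your diagnosis of the obstacle is wrong, although your remedy is the right one. Substituting Lemma~\ref{lem:weakCY} with $\boldsymbol{x}=a^{(1)}$, $v=Z(a^{(2)})$, $w=a_0$ gives
\[
\iota_Z\Psi(\varphi)(\bfa)=\sum\pm\,\varphi\bigl(\emm(a^{(5)},a_0,a^{(1)},Z(a^{(2)}),a^{(3)})\,|\,a^{(4)}\bigr).
\]
So $Z$ is never outside the outer $\emm$, because both $\boldsymbol{x}$ and $\underline{v}$ are arguments of $\emm$. Your own displayed formula already shows the real mismatch: $Z$ sits to the right of $a_0$. By contrast, $\bee\{Z\}$, and hence $Z\cap-$, places $Z$ to the left of $a_0$. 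Thus $\iota_Z\Psi(\varphi)$ equals $\pm\varphi\circ\bee'\{Z\}$ on the nose, where $\bee'$ is the right-insertion variant. It is not $\varphi(Z\cap-)$ plus stray terms.

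Your tail-insertion homotopy is exactly what compares the two caps. Let $e_Z(\bfa)=\sum\pm a_0|a^{(1)},Z(a^{(2)}),a^{(3)}$, the first summand of $\mathcal{L}_Z$. Then $[\bee,e_Z]=\pm\bee\{Z\}\pm\bee'\{Z\}\pm e_{\delta Z}$; the two caps come from the $\bee_1$-terms whose outer $\emm$ swallows $Z$ on either side of $a_0$. Set $h_Z=\varphi\circ e_Z$ and use $\varphi\circ\bee=0$. The correct relation is $h_Z\circ\bee=\pm\iota_Z\rho\pm\varphi(Z\cap-)\pm h_{\delta Z}$, rather than the one stated ``modulo $\iota_{\delta Z}$''. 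On a cycle $\beta$ with $\delta Z=0$ this gives exactly what you need. Your plan goes through once this step is redone with the correct bookkeeping.
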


\begin{proof}
    Since $\langle-,-\rangle_{Muk}$ is non-degenerate, it suffices to show that $$\langle D(\phi \cup \psi), \bfa \rangle_{Muk} = \langle \phi \cap D(\psi),\bfa\rangle_{Muk}, \qquad \text{for any } \bfa\in HH_*(\AAA)$$
    The LHS by definition of $D$ and $\iota_-\rho$ is equal to $(\iota_{\phi\cup\psi}\rho)(\bfa)$ or $(-1)^{|\phi||\psi|}(\iota_{\psi\cup\phi}\rho)(\bfa)$ by the commutativity of $\cup$. On the other hand, the right hand side is equal to $(-1)^{|\phi||D(\psi)|}\langle  D(\psi),\phi \cap\bfa\rangle_{Muk}$ by Lemma \ref{appA.1}, which is further equal to $(-1)^{|\phi||D(\psi)|}(\iota_\psi \rho)(\phi\cap\bfa)$ by definition of $D$. So we need to show \begin{equation}
        (-1)^{|\phi||\psi|}(\iota_{\psi\cup\phi}\rho)(\bfa) = (-1)^{|\phi||D(\psi)|}(\iota_\psi \rho)(\phi\cap\bfa) \label{eq39}
    \end{equation}
Write $\bfa = a_0|a_1,...,a_n$, then the LHS of equation (\ref{eq39}) becomes
\begin{align}
    (\iota_{\psi\cup\phi}\rho)(\bfa) &=\sum(-1)^{@+\dagger}\rho\left(a^{(1)}, \underline{\psi\cup\phi(a^{(2)},a^{(3)},a^{(4)},a^{(5)},a^{(6)})},a^{(7)})\right)(a_0)\\
    &=\sum(-1)^{@+\dagger+|\cup|}\rho\left(a^{(1)}, \underline{\emm(a^{(2)},\psi(a^{(3)}),a^{(4)},\phi(a^{(5)}),a^{(6)})},a^{(7)}\right)(a_0)
\end{align}
Similarly, we compute the RHS as \begin{align}
    (\iota_\psi \rho)(\phi\cap\bfa)=\sum (-1)^{@+\dagger+|\cap|} \rho\left(a^{(2)}, \underline{\psi(a^{(3)})}, a^{(4)}\right)\left(\emm(a^{(5)}, \phi(a^{(6)}), a^{(7)}, a_0, a^{(1)})\right) \label{eq 43}
\end{align}
where $|\cup|=|\phi|(|a^{(2)}|+|a^{(3)}|+|a^{(4)}|) + |\psi||a^{(2)}|$, \ $|\cap|=|\phi|'(|a^{(2)}|+|a^{(3)}|+|a^{(4)}|+ |a^{(2)}|)$ and $\dagger$ is the sign from the definition of $\iota_-\rho$, for example,  in equation (\ref{eq 43}), $\dagger:=|\psi|^{\prime}\left(|\rho|+\left|a^{(2)}\right|^{\prime}\right)$

Define \begin{equation}
    \iota_{\psi,\phi}( \theta)(\bfa):= \sum (-1)^{@+\dagger'} \ \theta\left(a^{(1)}, \underline{\psi(a^{(2)})}, a^{(3)}, \phi(a^{(4)}), a^{(5)}\right)(a_0) \label{eqdef}
\end{equation} where $\dagger' = |\phi|'(|\theta|+|a^{(1)}|+|a^{(2)}|+|a^{(3)}|) + |\psi|^{\prime}(|\theta|+|a^{(1)}|)$.
The computation below shows that \begin{equation}
\iota_{\psi,\phi}(\partial \rho)(\bfa)= \iota_{\psi\cup\phi}( \rho)(\bfa) + (-1)^{|\rho||\phi|}\iota_{\psi} \rho(\phi\cap\bfa) + \iota_{\psi,\phi}( \rho)(\bee(\bfa))
\end{equation}
\begin{align}
    \iota_{\psi,\phi}(\partial\rho)(\bfa) &= \sum (-1)^{@+\dagger'}\ \partial\rho\left(a^{(1)}, \underline{\psi(a^{(2)})}, a^{(3)}, \phi(a^{(4)}), a^{(5)}\right)(a_0)\\
    &=\sum (-1)^{@+\dagger'+ 1+|\rho|+|\emm_<|}\ \rho\left(a^{(2)}, \underline{\psi(a^{(3)})}, a^{(4)}, \phi(a^{(5)}), a^{(6)}\right)\left(\emm (a^{(7)},a_0, a^{(1)})\right)\\
    &+\sum (-1)^{@+\dagger'+ 1+|\rho|+|\emm_<|}\ \rho\left(a^{(2)}, \underline{\psi(a^{(3)})}, a^{(4)}\right)\left(\emm (a^{(5)}, \phi(a^{(6)}),a^{(7)},a_0, a^{(1)})\right) \label{eq47}\\
    &+\sum (-1)^{@+\dagger'+ 1+|\rho|+|\emm_<|}\ \rho\left(a^{(1)}, \underline{\emm(a^{(2)},\psi(a^{(3)}), a^{(4)})}, a^{(5)}, \phi(a^{(6)}), a^{(7)}\right)(a_0)\label{eq48}\\
    &+\sum (-1)^{@+\dagger'+ 1+|\rho|+|\emm_<|}\ \rho\left(a^{(1)}, \underline{\emm(a^{(2)},\psi(a^{(3)}), a^{(4)}), \phi(a^{(5)}), a^{(6)}}), a^{(7)}\right)(a_0) \label{eq49}\\
    &+\sum (-1)^{@+\dagger'+ 1+|\rho|+|\emm_<|}\ \rho\left(a^{(1)}, \emm(a^{(2)}),a^{(3)},\underline{\psi(a^{(4)})}, a^{(5)}, \phi(a^{(6)}), a^{(7)}\right)(a_0)\\
    &+\sum (-1)^{@+\dagger'+ 1+|\rho|+|\emm_<|}\ \rho\left(a^{(1)}, \underline{\psi(a^{(2)})}, a^{(3)}, \emm(a^{(4)}), a^{(5)},\phi(a^{(6)}), a^{(7)}\right)(a_0)\\
    &+\sum (-1)^{@+\dagger'+ 1+|\rho|+|\emm_<|}\ \rho\left(a^{(1)}, \underline{\psi(a^{(2)})}, a^{(3)}, \emm(a^{(4)},\phi(a^{(5)}), a^{(6)}), a^{(7)}\right)(a_0)\label{eq52}\\
    &+\sum (-1)^{@+\dagger'+ 1+|\rho|+|\emm_<|}\ \rho\left(a^{(1)}, \underline{\psi(a^{(2)})}, a^{(3)},\phi(a^{(4)}), a^{(5)}, \emm(a^{(6)}), a^{(7)}\right)(a_0)
\end{align}
where $\dagger'$ is the appropriate Koszul sign as in equation (\ref{eqdef}), and $|\emm_<|$ is the sum of (shifted) degrees of terms to the left of $\emm$. Clearly, equations (\ref{eq47}) and (\ref{eq49}) correspond to $\iota_{\psi} \rho(\phi\cap\bfa)$ and $\iota_{\psi\cup\phi}( \rho)(\bfa)$ respectively. Since $\psi$ and $\phi$ are closed cochains, the terms in \ref{eq48} and \ref{eq52} can be rewritten as $$(-1)^{|\psi|+|a^{(2)}|'(|\psi|+1)}\rho\left(a^{(1)},\underline{\psi(a^{(2)},\emm(a^{(3)}), a^{(4)})}, a^{(5)}, \phi(a^{(6)}), a^{(7)}\right)(a_0)$$ and $$(-1)^{|\phi|+|a^{(4)}|'(|\phi|+1)}\rho\left(a^{(1)}, \underline{\psi(a^{(2)})}, a^{(3)}, \phi(a^{(4)},\emm(a^{(5)}), a^{(6)}), a^{(7)}\right)(a_0)$$ respectively. Combined with the rest of the terms, these now correspond to $\iota_{\psi,\phi}( \rho)(\bee(\bfa))$. Since $\bfa$ and $\rho$ are closed, we have $\iota_{\psi\cup\phi}( \rho)(\bfa) =- (-1)^{|\rho||\phi|}\iota_{\psi} \rho(\phi\cap\bfa)$.
\end{proof}

\begin{lemma} \label{lemma40}
Let $\omega\in HH_*(A)$ be the unique Hochschild homology class representing a fixed weak Calabi-Yau structure $\phi$, that is, $\phi=\langle \omega,-\rangle_{Muk}$. Then,   $D(1)=\omega.$
\end{lemma}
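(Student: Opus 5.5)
By the definition of $D$ in (\ref{eq D}), $D(1)$ is the unique class in $HH_*(A)$ with $\langle D(1),-\rangle_{Muk}=\iota_1\rho$ in $HH_*(A)^\vee$. The Mukai pairing is non-degenerate and $\phi=\langle\omega,-\rangle_{Muk}$, so it is enough to prove the identity
\[
\iota_{1}\rho=\phi \quad\text{in } HH_*(A)^\vee ,
\]
where $1=\ua\in C^*(A)^0$ is the unit cochain, i.e. the length-zero cochain with value $\ua$. Here $\rho$ is the chosen chain-level representative of $\Psi(\phi)$, which is $\Psi(\varphi)$ for a cocycle $\varphi\in C_*(A)^\vee$ representing $\phi$.

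The steps are as follows.

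First, I write out the contraction. Since $1$ has no inputs, the formula for $\iota_Z\rho$ with $Z=1$ reduces to
\[
\iota_1\rho(a_0|a_1,\dots,a_n)=\sum_i \pm\,\rho(a_1,\dots,a_i,\underline{\ua},a_{i+1},\dots,a_n)(a_0).
\]

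Second, I substitute the formula for $\Psi(\varphi)$ from Lemma \ref{lem:weakCY}:
\[
\rho(\boldsymbol{x},\underline{\ua},\boldsymbol{y})(a_0)=\sum\pm\varphi\big(\emm(\boldsymbol{y}^{(3)},a_0,\boldsymbol{x},\ua,\boldsymbol{y}^{(1)}),\boldsymbol{y}^{(2)}\big).
\]
Strict unitality kills every $\emm_k$ with $k\neq 2$ that has $\ua$ as an input, and $\emm_2$ with $\ua$ is nonzero only when the other input is adjacent to it. There are two surviving possibilities. In the first, $\boldsymbol{x}$ is empty, $\boldsymbol{y}^{(1)},\boldsymbol{y}^{(3)}$ are empty, and $\emm_2(a_0,\ua)=\pm a_0$; this term gives $\pm\varphi(a_0|a_1,\dots,a_n)$. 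In the second, $\boldsymbol{x}$ is empty, $\boldsymbol{y}^{(3)}$ is empty, and $\boldsymbol{y}^{(1)}$ is a single element $a_1$ with $\emm_2(\ua,a_1)=a_1$. This second case uses $a_0$ in a position where the inputs would have to be $\emm_3(a_0,\ua,a_1)=0$. So I expect exactly one term to survive, namely the one with $i=0$: the unit is inserted right after $a_0$ and absorbed by $\emm_2(a_0,\ua)$. Terms in which $\ua$ sits between $a_i$ and $a_{i+1}$ for $i\ge1$ need $\ua$ multiplied against a neighbour that lies inside the $\emm$ together with $a_0$, which forces arity at least $3$ and hence zero by unitality. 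I also note that the remaining chains $a_i$ are reduced, so no further unit cancellations occur.

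Third, I collect the signs, coming from $@$, $\dagger$ (with $|Z|'=|\ua|'=-1$), the sign in $\Psi$, and $\emm_2(a_0,\ua)=(-1)^{|a_0|}a_0$. I aim to show they cancel, so that $\iota_1\rho=\varphi$ on the nose at the chain level. This sign bookkeeping is the step I expect to be the main obstacle. If the signs leave an overall factor such as $(-1)^{|\rho|}$ or $(-1)^{|\bfa|}$, I would absorb it into the normalization of $D$: it is a global sign on each degree component, and the statement is an identity of classes up to the chosen conventions. As a cross-check, I would compare with the $(0,0)$ component already written in the text, $\Psi(\phi)_{0,0}(u)(v)=\pm\phi(\emm_2(v,u))$; setting $u=\ua$ there gives $\phi(v)$ up to the sign $(-1)^{|v|}$ coming from unitality.

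Finally, I pass to cohomology. This yields $\langle D(1),-\rangle_{Muk}=\phi=\langle\omega,-\rangle_{Muk}$, and non-degeneracy of the Mukai pairing gives $D(1)=\omega$. Combined with Proposition \ref{poincare}, this gives the generation statement: $D(\theta)=D(\theta\cup1)=\theta\cap\omega$ and $D$ is surjective.
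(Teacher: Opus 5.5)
Your proposal is correct and is essentially the paper's proof: expand $\iota_{1}\rho$ with $\rho=\Psi(\varphi)$, use strict unitality to kill every term except the one where $\ua$ sits right after $a_0$ and is absorbed by $\emm_2(a_0,\ua)$, identify what remains with $\phi$, and conclude $D(1)=\omega$ by non-degeneracy of the Mukai pairing. The sign bookkeeping you defer is just as loose in the paper, which arrives at $(-1)^{|\rho|+|a_0|(|a_1|'+\cdots+|a_n|')}\phi(\bfa)$ and silently drops that factor; note that this factor depends on $|a_0|$ separately, so it is not of the global type you propose to absorb into the normalization of $D$.
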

\begin{proof}
    \begin{align}
        \iota_{1}\rho&(a_0|a_1,...,a_n)=\sum (-1)^{@_\rho} \rho^\otimes(a^{(1)}, \underline{1}, a^{(2)}, a^{(3)}, a^{(4)})(a_0) \notag\\
        &= \sum (-1)^{@_\rho+@_\phi} \phi(\emm_*(a^{(4)}, a_0, a^{(1)}, 1,a^{(2)}), a^{(3)}), \ \  \text{ (since $\rho =\Psi(\phi)$)} \notag\\
        &= (-1)^{|\rho|+|a_0|(|a_1|'+...+|a_n|')}\phi(a_0|a_1,...,a_n)\notag\\
        &= (-1)^{|\rho|+|a_0|(|a_1|'+...+|a_n|')}\langle \omega, a_0|a_1,...,a_n\rangle_{Muk}\notag
     \end{align}
where $@_\rho=|\rho|+|a^{(1)}|'+|a_0|'(|a^{(1)}|'+|a^{(2)}|'+|a^{(3)}|'+|a^{(4)}|')$ and $@\rho=(|a_0|'+|a^{(4)}|')(|a^{(1)}|'+|a^{(2)}|'+1)$.
But by construction of $D$, we have $(\iota_1\rho)(a_0|a_1,...,a_n)=\langle D(1), a_0|a_1,...,a_n\rangle_{Muk}$. Hence, $\langle \omega, a_0|a_1,...,a_n\rangle_{Muk}=\langle D(1), a_0|a_1,...,a_n\rangle_{Muk}$ or $D(1)=\omega$.
\end{proof}

\begin{corollary}\label{cor40}
    $\omega=D(1)$ generates $HH_*(A)$ as a $HH^*(A)$-module.
\end{corollary}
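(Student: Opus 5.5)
The proof is a formal consequence of Proposition \ref{poincare} and Lemma \ref{lemma40}. The key point is that $D$ is an isomorphism of $HH^*(A)$-modules, and the domain $HH^*(A)$ is cyclic as a module over itself, generated by the unit $1$. Before using this, I will check that the class $1 \in HH^*(A)$ really is a unit for $\cup$ on cohomology. By strict unitality, the constant cochain $1 \in C^*(A)^0$ (the length-zero cochain with value $\ua$) is closed. Moreover, $\theta \cup 1$ and $1 \cup \theta$ agree with $\pm\theta$ up to the unit axiom $\emm_2(\ua,a)=a=(-1)^{|a|}\emm_2(a,\ua)$, together with the vanishing of $\emm_n$ on $\ua$ for $n\neq 2$. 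So only the $\emm_2$ term survives in the cup product formula. With the sign conventions of the paper this gives $\theta\cup 1=\theta$ (at least up to a sign depending only on $|\theta|$, which is harmless), already at chain level.

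Now let $\bfa\in HH_*(A)$ be arbitrary. Since $D$ is an isomorphism (it is a composite of the isomorphism $\iota_-\rho$, coming from non-degeneracy of $\phi$, and the inverse of the Mukai isomorphism), there is a unique $\theta\in HH^*(A)$ with $D(\theta)=\bfa$. Proposition \ref{poincare} with $\psi=1$ then gives
\[
\bfa = D(\theta) = D(\theta\cup 1) = \theta \cap D(1) = \theta\cap \omega,
\]
where the last equality is Lemma \ref{lemma40}. Hence every class in $HH_*(A)$ has the form $\theta\cap\omega$, which is the claim. In fact the argument gives more: the map $\theta\mapsto\theta\cap\omega$ is an isomorphism $HH^*(A)\to HH_*(A)$, since it coincides with $D$. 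So $\omega$ is a free generator, and I would record this as well.

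The only step needing care is the unit identity $\theta\cup 1=\theta$ in $HH^*(A)$, including its sign. If the chain-level sign turns out to be $(-1)^{|\theta|}$, I would simply replace $\theta$ by $\pm\theta$; this does not affect the generation statement. The remaining steps are formal.
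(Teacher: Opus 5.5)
Your proposal is correct and is essentially the paper's argument: write an arbitrary class as $D(D^{-1}(\cdot)\cup 1)$, then apply Proposition \ref{poincare} with $\psi=1$ and Lemma \ref{lemma40}. Your extra check that $1$ is a unit for $\cup$ (up to a harmless sign, via strict unitality) fills in a step the paper leaves implicit.
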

\begin{proof}
    Since $D(\psi \cup 1)= \psi \cap D(1)$, let $\theta\in HH_*(A)$. Then $$\theta=D(D^{-1}(\theta)\cup 1) = D^{-1}(\theta) \cap D(1)$$
\end{proof}


\subsection{Weak Calabi-Yau structure on the tensor product}
We will obtain a weak Calabi-Yau structure on $A\otimes_\otimes B$ given weak Calabi-Yau structures on $A$ and $B$ respectively. We will abuse notation and write ``$\omega_A$ is a weak Calabi-Yau structure on $A$'' if $\phi_A:=\langle\omega_A, -\rangle_{Muk}$ is a non-degenerate element of $HH_*(A)$.

\begin{theorem}\label{thm42}
    $\phi_{A\otimes_\infty B}:=\langle\omega^\otimes, \ -\rangle_{Muk}$ where $\omega^\otimes := k(\omega_A \otimes \omega_B)$ is a weak Calabi-Yau structure on $A\otimes_\infty B$ if $\omega_A, \omega_B$ are weak Calabi-Yau structures on $A$ and $B$ respectively.
\end{theorem}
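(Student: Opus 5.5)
We need to show that $\phi_{A\otimes_\infty B}=\langle \omega^\otimes,-\rangle_{Muk}$ is non-degenerate, i.e. that $\Psi(\phi_{A\otimes_\infty B})$ is a quasi-isomorphism of bimodules $(A\otimes_\infty B)_\Delta\to (A\otimes_\infty B)_\Delta^\vee$. After passing to minimal models (so $A$, $B$, and hence $A\otimes_\infty B$, are finite-dimensional with $\emm_1=0$), this reduces to checking that the component $\Psi(\phi)_{0,0}$ is an isomorphism of vector spaces. Since $\mathfrak{m}_1^\otimes=\mathfrak{m}_1^A\otimes \mathrm{id}+\mathrm{id}\otimes\mathfrak{m}_1^B$, the minimal model of the tensor product is $H(A)\otimes H(B)$. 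By the computation preceding Lemma~\ref{appA.1}, that component is the pairing
\[
(u,v)\longmapsto \pm\,\phi_{A\otimes_\infty B}\big(\emm_2^\otimes(v,u)\big)=\pm\langle \omega^\otimes, \emm_2^\otimes(v,u)\rangle_{Muk},
\]
where $\emm_2^\otimes(v,u)$ is regarded as a length-zero Hochschild chain.

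The first step is to factor this pairing. For length-zero chains $v_A\otimes v_B$ and $u_A\otimes u_B$, Case 1 of the proof of Theorem~\ref{thm:gencommutingsub} gives $\emm_2^\otimes(v_A\otimes v_B,u_A\otimes u_B)=\pm\emm_2^A(v_A,u_A)\otimes\emm_2^B(v_B,u_B)$. A length-zero chain $x\otimes y$ equals $k(x;y)$, since the shuffle of empty words is empty. Therefore Theorem~\ref{thm: mukai} yields
\[
\langle \omega^\otimes,\emm_2^\otimes(v,u)\rangle_{Muk}=\pm\langle k(\omega_A;\omega_B),k(\emm_2^A(v_A,u_A);\emm_2^B(v_B,u_B))\rangle_{Muk}
\]
\[
=\pm\langle\omega_A,\emm_2^A(v_A,u_A)\rangle_{Muk}\,\langle\omega_B,\emm_2^B(v_B,u_B)\rangle_{Muk}.
\]
This identifies $\Psi(\phi_{A\otimes_\infty B})_{0,0}$, up to Koszul signs, with $\Psi(\phi_A)_{0,0}\otimes\Psi(\phi_B)_{0,0}$ under $(A\otimes B)^\vee\cong A^\vee\otimes B^\vee$; this uses finite-dimensionality. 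A tensor product of two linear isomorphisms is an isomorphism, so $\Psi(\phi_{A\otimes_\infty B})_{0,0}$ is invertible, and therefore $\Psi(\phi_{A\otimes_\infty B})$ is a quasi-isomorphism.

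We also need $\phi_{A\otimes_\infty B}$ to be a well-defined element of $HH_*(A\otimes_\infty B)^\vee$. This follows because $\omega^\otimes$ is a Hochschild cycle: $k$ is a chain map by Proposition~\ref{lem: kb=bk}, and the Mukai pairing descends to homology. Since $k$ is a quasi-isomorphism, different representatives of $\omega_A$ and $\omega_B$ change $\omega^\otimes$ only by a boundary.

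The main obstacle is that the chain-level representatives $\omega_A,\omega_B$ are in general not of length zero. The argument above pairs $\omega^\otimes=k(\omega_A;\omega_B)$, which has higher-length shuffle components, against the length-zero chain $\emm_2^\otimes(v,u)$, and we must make sure that Theorem~\ref{thm: mukai} is applied legitimately there. Theorem~\ref{thm: mukai} is stated for arbitrary chains $\bfa^1,\bfa^2,\bfb^1,\bfb^2$, so this application is valid. Even so, I would verify carefully that the formula $\Psi(\phi)_{0,0}(u)(v)=\pm\phi(\emm_2(v,u))$ holds for general (non-length-zero) $\phi$ in the minimal case, by inspecting Lemma~\ref{lem:weakCY} with $\boldsymbol{x}=\boldsymbol{y}=\emptyset$. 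Tracking the signs is routine and only affects the result up to an overall sign on each summand, which does not affect invertibility.
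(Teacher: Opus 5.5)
Your proposal is correct and is essentially the paper's own proof. You compute $\Psi(\phi_{A\otimes_\infty B})_{0,0}(u)(v)=\pm\langle k(\omega_A\otimes\omega_B),\emm_2^\otimes(v,u)\rangle_{Muk}$, identify the length-zero chain $\emm_2^\otimes(v,u)=\pm\emm_2^A(v_A,u_A)\otimes\emm_2^B(v_B,u_B)$ with its image under $k$, and factor via Theorem~\ref{thm: mukai} into the two nondegenerate pairings for $A$ and $B$. Your extra remarks (working with minimal models, checking that $\phi_{A\otimes_\infty B}$ is well defined on homology, and noting that Koszul signs do not affect invertibility) only make explicit what the paper leaves implicit.
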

\begin{proof}
    We need to show that $\phi_A\otimes\phi_B:=\langle\omega^\otimes, \ -\rangle_{Muk}$ is a non-degenerate element of $HH_*(A\otimes B)^\vee$. By Lemma \ref{lem:weakCY}, we have
    \begin{align}
        \Psi(\phi_{A\otimes_\infty B})&(u_A\otimes u_B)(v_A\otimes v_B)= (-1)^@ \phi_A\otimes\phi_B(\emm_2^\otimes(v_A\otimes v_B, u_A\otimes u_B))\\
        &=(-1)^@\langle k(\omega_A \otimes \omega_B), k(\emm_2^A(v_A, u_A)\otimes \emm_2^B(v_B,u_B))\rangle_{Muk}\\
        &=(-1)^@\langle\omega_A, \emm_2^A(v_A,u_A)\rangle_{Muk}\cdot \langle\omega_B, \emm_2^B(v_B,u_B)\rangle_{Muk} \ \ \text{ (by Theorem \ref{thm: mukai})}
    \end{align}
Since $\omega_A, \omega_B$ are weak Calabi-Yau structures on $A$ and $B$ respectively, the result follows.
\end{proof}

\begin{corollary} \label{cor43}
    $\omega^\otimes$ generates $HH_*(A\otimes_\infty B)$ as $HH^*(A\otimes_\infty B)$-module.
\end{corollary}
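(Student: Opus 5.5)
The plan is to apply Corollary \ref{cor40} directly to the $A_\infty$ algebra $A\otimes_\infty B$, equipped with the weak Calabi-Yau structure supplied by Theorem \ref{thm42}. Corollary \ref{cor40} says that for any finite-dimensional, smooth, proper $A_\infty$ algebra carrying a weak Calabi-Yau structure $\phi=\langle\omega,-\rangle_{Muk}$, the class $\omega=D(1)$ generates Hochschild homology as a module over Hochschild cohomology. So the work is to check that $A\otimes_\infty B$ meets these hypotheses and that its representing class is exactly $\omega^\otimes$.

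First, I check the standing hypotheses. $A\otimes_\infty B$ is realized on $A\otimes B$, which is finite-dimensional when $A$ and $B$ are; we may take $A,B$ minimal. It is strictly unital with unit $\ua\otimes\ub$ (Proposition \ref{prop18}). It is proper because its cohomology is $H(A)\otimes H(B)$. Smoothness follows from the standard fact that the diagonal bimodule of a tensor product is the external tensor product of the diagonal bimodules, so perfectness of $A_\Delta$ and $B_\Delta$ gives perfectness of $(A\otimes_\infty B)_\Delta$. The Mukai pairing on $A\otimes_\infty B$ is non-degenerate by the Corollary following Theorem \ref{thm: mukai}. This non-degeneracy is the one ingredient the construction of $D$ in (\ref{eq D}) actually needs, and we can also read it off directly from (\ref{eq:muk_k}) together with the fact that $k$ is a quasi-isomorphism (Proposition \ref{lem: kb=bk}).

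Second, Theorem \ref{thm42} shows that $\phi_{A\otimes_\infty B}=\langle\omega^\otimes,-\rangle_{Muk}$ is non-degenerate. Since the Mukai pairing is non-degenerate, $\omega^\otimes$ is the unique class representing this weak Calabi-Yau structure. Lemma \ref{lemma40}, applied to $A\otimes_\infty B$, then gives $D_{A\otimes_\infty B}(1)=\omega^\otimes$. Proposition \ref{poincare} makes $D_{A\otimes_\infty B}$ an isomorphism of $HH^*(A\otimes_\infty B)$-modules. The argument of Corollary \ref{cor40} now applies verbatim: for $\theta\in HH_*(A\otimes_\infty B)$ we get $\theta=D^{-1}(\theta)\cap\omega^\otimes$.

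I expect the main obstacle to be confirming that smoothness transfers to $A\otimes_\infty B$, which Proposition \ref{poincare} and Lemma \ref{lemma40} implicitly use through the non-degeneracy of the pairing and the identification $HH_*\cong HH_*^\vee$. If the bimodule argument is awkward to carry out for Amorim's model, the fallback is to use only the properties that are actually invoked, namely non-degeneracy of the Mukai pairing (already established) and the fact that $\Psi(\phi)$ is a quasi-isomorphism (Theorem \ref{thm42}). Both are available, so the corollary should follow without needing smoothness separately.
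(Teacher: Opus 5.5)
Your proposal is correct and is essentially the paper's proof. Theorem \ref{thm42} makes $\omega^\otimes$ a weak Calabi-Yau structure on $A\otimes_\infty B$, and Corollary \ref{cor40} (that is, $D(1)=\omega^\otimes$ by Lemma \ref{lemma40}, combined with the module isomorphism of Proposition \ref{poincare}) then gives generation. Your extra check of the standing hypotheses is sound and is care the paper leaves implicit: in particular, you observe that smoothness is only used through non-degeneracy of the Mukai pairing, and for $A\otimes_\infty B$ that already follows from (\ref{eq:muk_k}) together with Proposition \ref{lem: kb=bk}.
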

\begin{proof}
    By Theorem \ref{thm42}, $\omega^\otimes$ is a weak Calabi-Yau structure on $A\otimes_\infty B$. Hence, by Corollary \ref{cor40}, $\omega^\otimes$ is a generator of $HH_*(A\otimes_\infty B)$ as $HH^*(A\otimes_\infty B)$-module.
\end{proof}
\subsection{Splittings of the Hodge filtration}
As stated in the introduction, the negative cyclic homology $HC_*^-(A)$ of $A$ carries the $u$-filtration $F^pHC_*^-(A)=u^pHC_*^-(A)$, also known as the non-commutative Hodge filtration. A {\it splitting} of this Hodge filtration is a $\mathrm{k}$-linear map $s=s_A:HH_*(A)\xrightarrow{} HC_*^-(A)$ whose reduction modulo $u$ is the canonical isomorphism $HH_*(A)\cong HC_*^-(A)/uHC_*^-(A)$. We state the following definition, formulated in terms of the semi-infinite Hodge structure (see \parencite[Theorem A]{sheridan2020formulae}) of $\AAA$ given by the triple $\left(H C_{\bullet}^{-}(\AAA), \nabla_{\frac{d}{d u}},\langle-,-\rangle_{hres }\right)$.
\begin{definition}\label{goodsplittings}
 Let $\omega_A$ be a generator of  $H H_{*}(\AAA)$. A $\mathrm{k}$-linear map $s=s_A: H H_{*}(\AAA) \rightarrow H C_{*}^{-}(\AAA)$ is called a splitting of the Hodge filtration of $\AAA$ if it satisfies
\begin{itemize}
    \item[S1] (Splitting condition) $s$ splits the canonical map $\pi: H C_{*}^{-}(\AAA) \rightarrow H H_{*}(\AAA)$, defined as $$\pi\left(\sum_{n \geq 0} \alpha_n u^n\right)=\alpha_0$$
    \item[S2] (Lagrangian condition) $\langle s(\alpha), s(\beta)\rangle_{hres }=\langle\alpha, \beta\rangle_{Muk }, \forall \alpha, \beta \in H H_{*}(\AAA)$.
\end{itemize}
\noindent
 A splitting $s$ is called a good splitting if it satisfies

S3. (Homogeneity) $L^s:=\bigoplus_{l \in \mathbb{N}} u^{-l} \cdot \operatorname{Im}(s)$ is stable under the u-connection $\nabla_{u \frac{d}{d u}}$.

This is equivalent to requiring $\nabla_{u \frac{d}{d u}} s(\alpha) \in u^{-1} \operatorname{Im}(s)+\operatorname{Im}(s), \forall \alpha$. \\
A splitting $s$ is called $\omega$-compatible if

S4. ($\omega$-Compatibility) $\nabla_{u \frac{d}{d u}} s(\omega) \in r \cdot s(\omega)+u^{-1} \cdot \operatorname{Im}(s)$ for some $r \in \mathrm{k}$.
\end{definition}

In Amorim-Tu \parencite[Section 3.2, and Theorem 3.10]{amorim2022categorical}, $\omega$-compatible good splittings are shown to exist in the case where the Hochschild cohomology ring $HH^*(\AAA)$ is semi-simple. They further classified them in terms of grading operators on Hochschild homology. In that setting, $HH_*(A)$ decomposes into weight spaces $\bigoplus_i HH_*(A)_{\lambda_i}$. The stability condition of $s$ under the $u$-connection becomes equivalent to requiring $\nabla_{u\frac{d}{du}} \left( s_A(\alpha) \right) = u^{-1} s_A(\xi_A(\alpha)) + s_A(\mu_A(\alpha)), \ \forall \alpha\in HH_*(A)$, where the diagonal endomorphism $\xi_A$ acts as scalar multiplication by the eigenvalues $\lambda_i \cdot \mathrm{id}$ on $HH_*(A)_{\lambda_i}$, and $\mu_A: HH_*(A)\to HH_*(A)$ is a grading operator. 


In our framework, we do not assume semi-simplicity. Instead, we show that for any smooth and proper $A_\infty$-algebra $A$ equipped with a good splitting $s_A$, the connection $\nabla_{u\frac{d}{du}}$ defines unique endomorphisms $\xi_A,$ and $ \mu_A$ without requiring $\xi_A$ to be diagonal.
\begin{lemma}\label{lem:endo_mu}
    For any smooth and proper $A_\infty$-algebra $A$ equipped with a good splitting $s=s_A: HH_*(A) \to HC_*^-(A)$, the action of the connection operator $\nabla_{u\frac{d}{du}}$ on $\operatorname{Im}(s_A)$ is given by: $$\nabla_{u\frac{d}{du}} \left( s_A(\alpha) \right) = u^{-1} s_A(\xi_A(\alpha)) + s_A(\mu_A(\alpha)), \ \forall \alpha\in HH_*(A),$$ for uniquely determined $\mathrm{k}$-linear endomorphisms $\xi_A, \mu_A \in \mathrm{End}(HH_*(A))$.
\end{lemma}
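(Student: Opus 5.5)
The plan is to decompose the element $\nabla_{u\frac{d}{du}} s_A(\alpha)$ along the direct sum $u^{-1}\operatorname{Im}(s_A)\oplus \operatorname{Im}(s_A)$, read off its two components, and define $\xi_A,\mu_A$ as the corresponding preimages under $s_A$. Throughout I assume Hodge-to-de-Rham degeneration, so that $HC_*^-(A)\cong HH_*(A)[[u]]$ is a free $\mathrm{k}[[u]]$-module, and I extend $s_A$ $u$-linearly. First I will check that this extension $\tilde s: HH_*(A)[[u]]\to HC_*^-(A)$ is an isomorphism of $\mathrm{k}[[u]]$-modules. By (S1), $\pi\circ s_A=\mathrm{id}$, so $\tilde s$ reduces modulo $u$ to the canonical identification $HH_*(A)\cong HC_*^-(A)/uHC_*^-(A)$. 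A $u$-linear map between finite rank free $\mathrm{k}[[u]]$-modules that is an isomorphism modulo $u$ is an isomorphism, by Nakayama or by a $u$-adic successive approximation argument.

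Several consequences follow. The map $s_A$ is injective, and $\operatorname{Im}(s_A)$ is a $\mathrm{k}$-subspace with $\operatorname{Im}(s_A)\cap u\,HC_*^-(A)=0$. Moreover every element of $u^{-N}HC_*^-(A)$ can be written uniquely as $\sum_{l\ge -N} u^{l} s_A(\beta_l)$ with $\beta_l\in HH_*(A)$. In particular the sum $u^{-1}\operatorname{Im}(s_A)+\operatorname{Im}(s_A)$ is direct inside $u^{-1}HC_*^-(A)$.

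Next I bring in the connection. By definition (\ref{defnconnection}), $\nabla_{u\frac{d}{du}}=u\nabla_{\frac{d}{du}}$ maps $HC_*^-(A)$ into $u^{-1}HC_*^-(A)$. By (S3), in the equivalent form stated after Definition \ref{goodsplittings}, $\nabla_{u\frac{d}{du}}s_A(\alpha)\in u^{-1}\operatorname{Im}(s_A)+\operatorname{Im}(s_A)$. Using the direct sum decomposition above, I write
\[
\nabla_{u\frac{d}{du}}s_A(\alpha)=u^{-1}s_A(\beta)+s_A(\gamma)
\]
with $\beta,\gamma\in HH_*(A)$ uniquely determined by $\alpha$, and then set $\xi_A(\alpha):=\beta$ and $\mu_A(\alpha):=\gamma$.

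Linearity of $\xi_A$ and $\mu_A$ over $\mathrm{k}$ follows from the $\mathrm{k}$-linearity of $s_A$ and $\nabla_{u\frac{d}{du}}$, combined with the uniqueness of the decomposition. Uniqueness of the pair $(\xi_A,\mu_A)$ follows the same way. Suppose $u^{-1}s_A(\xi)+s_A(\mu)=u^{-1}s_A(\xi')+s_A(\mu')$. Multiplying by $u$ and reducing modulo $u$ gives $\xi=\xi'$, using $\pi\circ s_A=\mathrm{id}$. Then $s_A(\mu)=s_A(\mu')$, and injectivity gives $\mu=\mu'$.

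The only step with real content is the directness of $u^{-1}\operatorname{Im}(s_A)+\operatorname{Im}(s_A)$, and with it the meaning of (S3). I expect this to be the main obstacle, because it needs $HC_*^-(A)$ to be $u$-torsion free. Without degeneration, $u^{-1}HC_*^-(A)$ is not even naturally defined. I will therefore say explicitly that the standing hypotheses, namely smooth and proper together with Hodge-to-de-Rham degeneration as in the EP-structure theorem, are what make the argument go through. Beyond that, and unlike the semi-simple case, no diagonalizability of $\xi_A$ is used.
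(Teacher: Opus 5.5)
Your proposal is correct and follows essentially the same route as the paper. Both arguments identify $HC_*^-(A)$ with the $u$-linear span of $\operatorname{Im}(s_A)$, use (S3) to place $\nabla_{u\frac{d}{du}}s_A(\alpha)$ in $u^{-1}\operatorname{Im}(s_A)\oplus\operatorname{Im}(s_A)$, read off $\xi_A,\mu_A$ as the two components, and get uniqueness by multiplying by $u$ and applying $\pi$. Your explicit Nakayama argument, and your flagging of Hodge-to-de-Rham degeneration, are slightly more careful than the paper, which uses degeneration tacitly when it writes $HC_*^-(A)\cong\bigoplus_{l}u^l\operatorname{Im}(s_A)$ (a completed sum is really meant there).
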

\begin{proof}
    Given the canonical filtration on negative cyclic homology, the map $s_A: HH_*(A) \to HC_*^-(A)$ induces an isomorphism of $\mathrm{k}[[u]]$-modules: $$HC_*^-(A) \cong \bigoplus_{l=0}^\infty u^l \cdot s_A(HH_*(A))=\bigoplus_{l=0}^\infty u^l \cdot \text{Im}(s_A).$$ Since $s_A$ is a good splitting, we have the stability condition $\nabla_{u \frac{d}{d u}} s(\alpha) \in u^{-1} \operatorname{Im}(s)+\operatorname{Im}(s), \ \forall \alpha\in HH_*(A)$, which forces all positive powers of $u$ in the module expansion of $\nabla_{u\frac{d}{du}}(s_A(\alpha))$ to vanish identically. This stability condition implies the existence of unique elements $x,y\in HH_*(A)$ such that $$\nabla_{u\frac{d}{du}}\left(s_A(\alpha)\right) = u^{-1} s_A(x) + s_A(y).$$ Setting $\xi_A(\alpha) := x$ and $\mu_A(\alpha) := y$, we obtain linear maps $\xi_A, \mu_A: HH_*(A) \to HH_*(A)$. Because $A$ is smooth and proper, $HH_*(A)$ is finite-dimensional, hence $\xi_A$ and $\mu_A$ are well-defined endomorphisms in $\mathrm{End}(HH_*(A))$. We explicitly compute $\xi_A(\alpha)$ and $\mu_A(\alpha)$ for any $\alpha\in HH_*(A)$. 

Multiply $\nabla_{u\frac{d}{du}} \left( s_A(\alpha) \right)$ by $u$ and evaluate modulo $u HC_*^-(A)$: $$u \cdot \nabla_{u\frac{d}{du}} \left( s_A(\alpha) \right) = s_A(\xi_A(\alpha)) + u s_A(\mu_A(\alpha)).$$ Applying $\pi_A$, we obtain $$\xi_A(\alpha) = \pi_A \left( u \cdot \nabla_{u\frac{d}{du}} \left( s_A(\alpha) \right) \right).$$ Since $u \cdot \nabla_{u\frac{d}{du}} \left( s_A(\alpha) \right)$ is a well-defined element of $HC_*^-(A)$, its image under $\pi_A$ is unique. 

Similarly, compute the difference: $$\nabla_{u\frac{d}{du}} \left( s_A(\alpha) \right) - u^{-1} s_A(\xi_A(\alpha)) = s_A(\mu_A(\alpha))\in HC_*^-(A).$$ Applying $\pi_A$ yields: $$\mu_A(\alpha) = \pi_A \left( \nabla_{u\frac{d}{du}} \left( s_A(\alpha) \right) - u^{-1} s_A(\xi_A(\alpha)) \right).$$ Since $s_A$, $\pi_A$, and $\nabla_{u\frac{d}{du}}$ are fixed, $\xi_A(\alpha)$ and $\mu_A(\alpha)$ are canonically defined for every $\alpha \in HH_*(A)$. 
\end{proof}

We apply the previous results to obtain a $\omega^\otimes$-compatible good splitting of the Hodge filtration on $A\otimes_\infty B$ in terms of the splittings on $A$ and $B$.

\begin{theorem}\label{thmgoodsplitting}
Let $s_A, s_B$ be splittings of the hodge filtration on $A$, and $B$ respectively. Then $s_{A\otimes_\infty B}: = (k+uK)\circ(s_A \otimes s_B)\circ k^{-1}$ is a splitting of the Hodge filtration on $A\otimes_\infty B$. Further, if $s_A, s_B$ are $\omega_A, \omega_B$-compatible good splittings of the Hodge filtration on $A,$ and $B$ respectively, then $s_{A\otimes_\infty B}$ is a $\omega^{\otimes }$-compatible good splitting of the Hodge filtration of $A\otimes_\infty B$, where $\omega^\otimes=k(\omega_A\otimes\omega_B)$. We have the following commutative diagram in Figure \ref{fig:commdiagram}.
\end{theorem}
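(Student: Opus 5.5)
The plan is to check S1--S4 for $s_{A\otimes_\infty B}$ directly. Throughout, $k$ is read as the Hochschild-level K\"unneth isomorphism $HH_*(A)\otimes HH_*(B)\to HH_*(A\otimes_\infty B)$ of Proposition \ref{lem: kb=bk}, so that $k^{-1}$ makes sense. The map $k+uK$ is read as the induced $\mathrm{k}[[u]]$-linear isomorphism $HC^-_*(A)\otimes_{\mathrm{k}[[u]]}HC^-_*(B)\to HC^-_*(A\otimes_\infty B)$ of Proposition \ref{cycqism}. The latter is an isomorphism because both sides are free by Hodge-to-de-Rham degeneration.

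\textbf{S1.} Modulo $u$, the map $k+uK$ reduces to $k$. The projection $\pi$ is compatible with tensor products, in the sense that $\pi_{A\otimes_\infty B}\circ(k+uK)=k\circ(\pi_A\otimes\pi_B)$. Hence
\[
\pi\, s_{A\otimes_\infty B}=k(\pi_A s_A\otimes\pi_B s_B)k^{-1}=\mathrm{id}.
\]

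\textbf{S2.} Write $k^{-1}(\alpha)=\sum\alpha_A\otimes\alpha_B$ and similarly for $\beta$. By (\ref{eq:hresk+uK}) extended sesquilinearly, the pairing $\langle s_{A\otimes_\infty B}\alpha,s_{A\otimes_\infty B}\beta\rangle_{hres}$ equals the sum, with the Koszul sign $(-1)^{|\alpha_B||\beta_A|}$, of the products $\langle s_A\alpha_A,s_A\beta_A\rangle_{hres}\langle s_B\alpha_B,s_B\beta_B\rangle_{hres}$. By S2 for $A$ and $B$, each factor is the corresponding Mukai pairing. Theorem \ref{thm: mukai} then identifies the sum with $\langle\alpha,\beta\rangle_{Muk}$.

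\textbf{S3 and S4.} By Theorem \ref{mainthm}, $k+uK$ intertwines $\nabla_{d/du}$ with $\nabla\otimes1+1\otimes\nabla$. Since $u\,d/du$ is a derivation, $k+uK$ also intertwines $\nabla_{u\,d/du}$ with $\nabla_{u\,d/du}\otimes1+1\otimes\nabla_{u\,d/du}$. Lemma \ref{lem:endo_mu} gives endomorphisms $\xi_A,\mu_A$ and $\xi_B,\mu_B$. This yields
\[
\nabla_{u\frac{d}{du}}\,s_{A\otimes_\infty B}\,k(\alpha_A\otimes\alpha_B)=(k+uK)\bigl(u^{-1}(s_A\otimes s_B)(\xi_A\otimes1+1\otimes\xi_B)+(s_A\otimes s_B)(\mu_A\otimes1+1\otimes\mu_B)\bigr)(\alpha_A\otimes\alpha_B),
\]
which lies in $u^{-1}\operatorname{Im}(s_{A\otimes_\infty B})+\operatorname{Im}(s_{A\otimes_\infty B})$. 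This proves S3.

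For S4, take $\alpha_A=\omega_A$ and $\alpha_B=\omega_B$. The hypotheses give $\mu_A(\omega_A)=r_A\omega_A$ modulo the $u^{-1}$ part, that is, $\nabla_{u\,d/du}s_A(\omega_A)\in r_As_A(\omega_A)+u^{-1}\operatorname{Im}s_A$, and similarly for $B$. The cross terms produced above are then
\[
(r_A+r_B)\,s_{A\otimes_\infty B}(\omega^\otimes)+u^{-1}(k+uK)(\text{elements of }\operatorname{Im}s_A\otimes\operatorname{Im}s_B).
\]
These lie in $(r_A+r_B)\,s_{A\otimes_\infty B}(\omega^\otimes)+u^{-1}\operatorname{Im}(s_{A\otimes_\infty B})$. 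So S4 holds with $r=r_A+r_B$, and Theorem \ref{thm42} makes $\omega^\otimes$ a legitimate Calabi-Yau generator.

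\textbf{Main obstacle.} The delicate step is the connection compatibility. Theorem \ref{mainthm} holds only on homology, via the homotopy $N=H+H'$. One must check that $H'$ has a $u^{-1}$ factor yet causes no pole issues, so the intertwining is a genuine identity of maps on $HC^-_*$. I would also verify that all signs in the tensor product of pairings and connections follow the Koszul rule, which is consistent with the sign in (\ref{eq:muk_k}). The commutative diagram then simply records $s_{A\otimes_\infty B}\circ k=(k+uK)\circ(s_A\otimes s_B)$.
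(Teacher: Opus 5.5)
Your proposal is correct and follows the paper's proof step for step. S1 comes from $\pi^\otimes\circ(k+uK)=k\circ(\pi_A\otimes\pi_B)$. S2 comes from (\ref{eq:hresk+uK}) extended sesquilinearly, together with Theorem \ref{thm: mukai}. S3 and S4 come from the connection compatibility of Theorem \ref{mainthm} combined with the endomorphisms $\xi,\mu$ of Lemma \ref{lem:endo_mu}, giving $\xi_\otimes=k(\xi_A\otimes 1+1\otimes\xi_B)k^{-1}$, $\mu_\otimes=k(\mu_A\otimes 1+1\otimes\mu_B)k^{-1}$ and $r=r_A+r_B$.

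Two small remarks. Your explicit Koszul sign in S2 is a genuine refinement: the paper's (\ref{eq:hresk+uK}) silently drops the sign appearing in (\ref{eq:muk_k}). Also, the generator property of $\omega^\otimes$ required in Definition \ref{goodsplittings} is really Corollary \ref{cor43}, not Theorem \ref{thm42} alone.
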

\begin{figure}[h]
    \centering
   \begin{tikzcd}[row sep=4em, column sep=6em]
HC_*^-(A) \otimes_{k[[u]]} HC_*^-(B) \arrow[r, "k + uK", "\simeq"'] \arrow[d, shift left=6pt, "\pi_A \otimes \pi_B"] & HC_*^-(A \otimes_\infty B) \arrow[d, shift right=6pt, "\pi^\otimes"'] \\
HC_*(A) \otimes HC_*(B) \arrow[r, "k", "\simeq"'] \arrow[u, shift left=6pt, "s_A \otimes s_B"] & HH_*(A \otimes_\infty B) \arrow[u, shift right=6pt, "s_{A \otimes_\infty B}"']
\end{tikzcd}
    \caption{Commutative diagram showing compatibility of K\"unneth isomorphisms and splittings}
    \label{fig:commdiagram}
\end{figure}

\begin{proof}
Clearly, $s_{\AAA\otimes_\infty\BB}$ satisfies the splitting condition since we have \begin{align}
    \pi^\otimes \circ s_{\AAA\otimes_\infty\BB}&= (\pi^\otimes \circ (k+uK))\circ(s_\AAA \otimes s_\BB)\circ k^{-1} \\
    &=k\circ (\pi_\AAA\otimes\pi_\BB) \circ (s_\AAA \otimes s_\BB)\circ k^{-1} \\
    &=k \circ (\text{id}_{HH_*(\AAA)}\otimes\text{id}_{HH_*(\BB)})\circ k^{-1} \\
    &= \text{id}_{HH_*(\AAA \otimes_\infty\BB)}
\end{align}
For the Lagrangian condition, let $\alpha, \beta \in HH_*(\AAA \otimes_\infty\BB)$. Denote $k^{-1}(\alpha)=\sum_{(\alpha)} \alpha^{(1)} \otimes \alpha^{(2)}$, and $k^{-1}(\beta)=\sum_{(\beta)} \beta^{(1)} \otimes \beta^{(2)}$. \\
Then \begin{align}
    \langle s&_{\AAA\otimes_\infty\BB}(\alpha), \ s_{\AAA\otimes_\infty\BB}(\beta)\rangle_{hres} \\
    &= \left\langle (k+uK)\sum_{(\alpha)} s_{\AAA}(\alpha^{(1)}) \otimes s_\BB(\alpha^{(2)}), (k+uK)\sum_{(\beta)} s_{\AAA}(\beta^{(1)}) \otimes s_\BB(\beta^{(2)})\right\rangle_{hres} (\text{by definition})\\
    &= \sum \langle s_{\AAA}(\alpha^{(1)}), \ s_{\AAA}(\beta^{(1)})\rangle_{hres} \cdot \sum \langle s_{\BB}(\alpha^{(2)}), \ s_{\BB}(\beta^{(2)})\rangle_{hres} \ \ \ (\text{by equation (\ref{eq:hresk+uK})})\\
    &= \sum \langle \alpha^{(1)}, \ \beta^{(1)}\rangle_{Muk} \cdot \sum \langle \alpha^{(2)}, \ \beta^{(2)}\rangle_{Muk}\\
    &= \left\langle k\left(\sum \alpha^{(1)} \otimes \alpha^{(2)}\right), \ k\left(\sum \beta^{(1)} \otimes \beta^{(2)}\right)\right\rangle_{Muk} \ \ \ \ (\text{by equation (\ref{eq:muk_k})})\\
    = & \ \langle \alpha, \ \beta\rangle_{Muk}
\end{align}
We will use the endomorphisms $\xi, \mu$ in Lemma \ref{lem:endo_mu} to prove homogeneity and $\omega^\otimes$-compatibility. Take $\alpha, \beta \in HH_*(\AAA\otimes_\infty\BB)$. We will show that $s_{\AAA\otimes_\infty\BB}$ satisfies the homogeneity condition.
\begin{align}
    \nabla_{u \frac{d}{du}}(s_{\AAA\otimes_\infty\BB} &\circ k(\alpha \otimes \beta)) =  \nabla_{u \frac{d}{du}} (k+uK)(s_\AAA(\alpha) \otimes s_\BB(\beta)) \\
    &= (k+uK)( \nabla_{u \frac{d}{du}} \otimes id + id \otimes  \nabla_{u \frac{d}{du}})(s_\AAA(\alpha) \otimes s_\BB(\beta))\\
    &= (k+uK)( \nabla_{u \frac{d}{du}} (s_\AAA(\alpha)) \otimes s_\BB(\beta) + s_\AAA(\alpha) \otimes  \nabla_{u \frac{d}{du}} (s_\BB(\beta)))\\
    &= (k+uK) \left((u^{-1}s_\AAA(\xi_\AAA (\alpha)) + s_\AAA (\mu_\AAA(\alpha))) \otimes s_\BB(\beta)\right) \notag \\ &\quad + (k+uK)\left(s_\AAA(\alpha) \otimes (u^{-1}s_\BB(\xi_\BB (\beta)) + s_\BB (\mu_\BB(\beta))\right)\\
    &= u^{-1}(k+uK)\left( s_\AAA \otimes s_\BB) (\xi_\AAA(\alpha) \otimes \beta + \alpha \otimes \xi_\BB (\beta) \right) \notag \\
    & \quad + (k+uK)\left( s_\AAA \otimes s_\BB) (\mu_\AAA(\alpha) \otimes \beta + \alpha \otimes \mu_\BB (\beta) \right) \\
    &= s_{\AAA\otimes_\infty \BB}\circ k \left( u^{-1} (\xi_\AAA \otimes id + id \otimes \xi_\BB) + (\mu_\AAA \otimes id + id \otimes \mu_\BB)\right)(\alpha\otimes\beta)\\
    \nabla_{u \frac{d}{du}}\left( s_{\AAA\otimes_\infty\BB}\right) &= s_{\AAA\otimes_\infty \BB}\circ k \left( u^{-1} (\xi_\AAA \otimes id + id \otimes \xi_\BB) + (\mu_\AAA \otimes id + id \otimes \mu_\BB)\right)\circ k^{-1}\\
    &= s_{\AAA\otimes_\infty \BB} \left( u^{-1} \xi_\otimes + \mu_\otimes \right) = u^{-1}s_{\AAA\otimes_\infty \BB} (\xi_\otimes) + s_{\AAA\otimes_\infty \BB} (\mu_\otimes) 
\end{align}
where $\xi_\otimes = k\circ(\xi_\AAA \otimes id + id \otimes \xi_\BB)\circ k^{-1}$ and $\mu_\otimes = k\circ (\mu_\AAA \otimes id + id \otimes \mu_\BB)\circ k^{-1}$

To show that $s_{\AAA\otimes_\infty\BB}$ is $\omega^{\otimes}$-compatible, where $\omega^{\otimes} = k(\omega_\AAA \otimes \omega_\BB)$,
\begin{align}
    \nabla_{u \frac{d}{du}}(s_{\AAA\otimes_\infty\BB} &\circ \omega^\otimes)  
    = (k+uK)( \nabla_{u \frac{d}{du}} \otimes id + id \otimes  \nabla_{u \frac{d}{du}})(s_\AAA(\omega_\AAA) \otimes s_\BB(\omega_\BB))\\
    &= (k+uK) \left((u^{-1}s_\AAA(\xi_\AAA (\omega_\AAA)) + r_\AAA s_\AAA (\omega_\AAA)) \otimes s_\BB(\omega_\BB)\right) \notag \\ &\quad + (k+uK)\left(s_\AAA(\omega_\AAA) \otimes (u^{-1}s_\BB(\xi_\BB (\omega_\BB)) + r_\BB s_\BB (\omega_\BB)\right)\\
    &= s_{\AAA\otimes_\infty \BB}\circ k \left( u^{-1} (\xi_\AAA \otimes id + id \otimes \xi_\BB) + (r_\AAA \otimes id + id \otimes r_\BB)\right)(\omega_\AAA\otimes\omega_\BB)\\
    \nabla_{u \frac{d}{du}}\left( s_{\AAA\otimes_\infty\BB}(\omega^\otimes)\right) &= s_{\AAA\otimes_\infty \BB} \left( u^{-1} \xi_\otimes + r_\otimes \right) = u^{-1}s_{\AAA\otimes_\infty \BB} (\xi_\otimes(\omega^\otimes)) + r_\otimes s_{\AAA\otimes_\infty \BB}(\omega^\otimes)
\end{align}
where $\xi_\otimes = k\circ(\xi_\AAA \otimes id + id \otimes \xi_\BB)\circ k^{-1}$ and $r_\otimes = 
r_\AAA + r_\BB$
\end{proof}


\newpage
\section{Appendix A}


\label{section-appendix}
In the calculations that follow, we will let $N(\bfa)=\sum(-1)^@\ a^{(2)}\otimes a_0\otimes a^{(1)}$  denote the sum of all cyclic permutations on $\bfa$,  where $@$ denotes the corresponding sign. Then by definition of the cyclic shuffle, we have \[
\operatorname{Sh}(\underline{\bfa}, \underline{\bfb}) = \sum_{} \operatorname{sh}_0(N\underline{\bfa}, N\underline{\bfb})
\]
where $\operatorname{sh}_0$ denotes the shuffles satisfying $\barb_0<\bara_0$.
\subsection{Proof of Lemma \ref{lem:AppendixA}}\label{secA.1} 
We compute term by term. The left hand side gives
\begin{align}
\Bee \circ k(\bfa; \bfb) &= \sum (-1)^{|b_0||a|' + @_{\AAA\otimes \BB}} \ 1\otimes 1\ | \ \operatorname{sh}(\bara;\barb)^{(2)}, a_0\otimes b_0, \operatorname{sh}(\bara;\barb)^{(1)}\notag\\
&= \sum (-1)^{|b_0||a|' + |a^{(2)}|'|b^{(1)}|'+@_{\AAA\otimes \BB}} \ 1\otimes 1\ | \ \operatorname{sh}(\bara^{(2)};\barb^{(2)}), a_0\otimes b_0, \operatorname{sh}(\bara^{(1)};\barb^{(1)})\label{eq:14}\\
\text{where } &\ @_{\AAA\otimes \BB}= (|a^{(2)}|'+|b^{(2)}|')(|a^{(1)}|'+|b^{(1)}|'+|a_0|+|b_0|)\notag
\end{align}
\begin{align}
\bee \circ K(\mathbf{a};\mathbf{b}) 
&= \sum_{\barb_0 < \bara_0} (-1)^{<+\dagger}\ 1\otimes 1 | \ \operatorname{Sh}(\underline{\bfa};\underline{\bfb})^{(1)}, \emm_*^\otimes(\operatorname{Sh}(\underline{\bfa};\underline{\bfb})^{(2)}), \operatorname{Sh}(\underline{\bfa};\underline{\bfb})^{(3)} \notag \\
 \qquad+ \sum_{\barb_0 < \bara_0} &(-1)^{\dagger+\kappa} \ \emm_*^\otimes\left(\operatorname{Sh}(\underline{\bfa};\underline{\bfb})^{(3)}, 1\otimes1, \operatorname{Sh}(\underline{\bfa};\underline{\bfb})^{(1)}\right) \ |\ \operatorname{Sh}(\underline{\bfa};\underline{\bfb})^{(2)}\label{eq:label}\\
\text{where }& \ <\ =1+|a^{(1)}|'+|b^{(1)}|', \  \dagger=|a^{(2)}|'|b^{(1)}|'+|a^{(3)}|'|b^{(1)}|'+|a^{(3)}|'|b^{(2)}|', \notag\\
\text{ and } &\ \kappa=(|a^{(3)}|'+|b^{(3)}|')(|a^{(2)}|'+|b^{(2)}|'+|a^{(1)}|'+|b^{(1)}|')\notag\\
= \sum (-1)&^{<+\dagger+|a^{(2)}|'(|a^{(1)}|'+|a_0|)+|b^{(2)}|'(|b^{(1)}|'+|b_0|)}1\otimes 1 | \operatorname{sh}(\underline{a}^{(2)};\underline{b}^{(2)}), \emm_2^\otimes(\barb_0,\bara_0), \operatorname{sh}(\underline{a}^{(1)};\underline{b}^{(1)})\label{eq:17}\\
+ \sum& (-1)^{<+\dagger}\ 1\otimes 1 | \ \operatorname{sh}(\underline{a}^{(3)};\barb^{(4)},\barb_0,\barb^{(1)}), \emm_*^\otimes\left(\operatorname{sh}(\bara^{(4)},\bara_0,\bara^{(1)};\underline{b}^{(2)})\right), \operatorname{sh}(\underline{a}^{(2)};\underline{b}^{(3)})  \label{eq:16}\\
+ \sum& (-1)^{<+\dagger}\ 1\otimes 1 | \ \operatorname{sh}(\underline{a}^{(2)};\underline{b}^{(3)}), \emm_*^\otimes\left(\operatorname{sh}(\underline{a}^{(3)};\barb^{(4)},\barb_0,\barb^{(1)})\right), \operatorname{sh}(\bara^{(4)},\bara_0,\bara^{(1)};\underline{b}^{(2)})  \label{eq:15}\\
+ \sum_{\barb_0 < \bara_0}& (-1)^{<+\dagger}\ 1\otimes 1 | \ \operatorname{sh}_0(\bara^{(4)},\bara_0,\bara^{(1)};\barb^{(4)},\barb_0,\barb^{(1)}), \emm_*^\otimes\left(\operatorname{sh}(\underline{a}^{(2)};\barb^{(2)})\right), \operatorname{sh}(\underline{a}^{(3)};\underline{b}^{(3)})  \label{eq:20}\\
+ \sum_{\barb_0 < \bara_0}& (-1)^{<+\dagger}\ 1\otimes 1 | \ \operatorname{sh}(\underline{a}^{(2)};\underline{b}^{(2)}), \emm_*^\otimes\left(\operatorname{sh}(\underline{a}^{(3)};\barb^{(3)})\right), \operatorname{sh}_0(\bara^{(4)},\bara_0,\bara^{(1)};\barb^{(4)},\barb_0,\barb^{(1)}) \label{eq:18}\\
+ \sum& (-1)^{<+\dagger}\ 1\otimes 1 | \ \operatorname{sh}(\underline{a}^{(2)};\barb^{(4)},\barb_0,\barb^{(1)}), \emm_*^\otimes\left(\operatorname{sh}(\underline{a}^{(3)};\barb^{(2)})\right), \operatorname{sh}(\bara^{(4)},\bara_0,\bara^{(1)};\underline{b}^{(3)}) \label{eq:19}\\
+ \sum &(-1)^{\dagger+\kappa} \ \emm_2^\otimes\left(a_0\otimes1, 1\otimes1\right) \ |\ \operatorname{sh}(\underline{a};N\underline{\bfb}) \ \text{ (by strict unitality on \ref{eq:label})}\label{eq:21a}\\
+ \sum &(-1)^{\dagger+\kappa} \ \emm_2^\otimes\left(1\otimes1, 1\otimes b_0\right) \ |\ \operatorname{sh}(N\underline{\bfa};\underline{b}) \ \text{ (by strict unitality on \ref{eq:label})}\label{eq:21}
\end{align}

Equation (\ref{eq:17}) cancels out with equation (\ref{eq:14}), with $\dagger$ in (\ref{eq:17}) adjusted to correspond with the renumbering. For example, in this case $\dagger= |a_0||b^{(2)}|'+|a^{(1)}|'|b^{(2)}|'+|a^{(1)}|'|b_0|$). 

The strict unitality condition on equation (\ref{eq:label}) yields (\ref{eq:21a} and \ref{eq:21}), but also terms of the form 
$\pm\ \emm_2^\otimes\left(a_i\otimes1, 1\otimes1\right) |\operatorname{sh}(N\underline{\bfa};N\underline{\bfb})$ and 
$\pm \ \emm_2^\otimes\left(1\otimes b_j, 1\otimes1\right) | \operatorname{sh}(N\underline{\bfa};N\underline{\bfb})$ $i,j>0$. These however come in commuting pairs, namely, 
$\pm\ \emm_2^\otimes\left(1\otimes1,a_i\otimes1\right) |\operatorname{sh}(N\underline{\bfa};N\underline{\bfb})$ and \\
$\pm\ \emm_2^\otimes\left(1\otimes 1,1\otimes b_j\right) | \operatorname{sh}(N\underline{\bfa};N\underline{\bfb})$ respectively, and thus cancel each other.

Also, note that from the commuting subalgebras criterion (Definition \ref{commutingsubalgebradefn}), the entries in $\emm_*^\otimes$ in equation (\ref{eq:15}) have to be all $\barb's$ except for when $*=2$ in which case $\emm_2^\otimes(\barb_0, \bara_i)$, $i>0$, shows up in a commuting pair with $\emm_2^\otimes(\bara_i,\barb_0)$, and thus cancel each other. Similarly, the entries in $\emm_*^\otimes$ in equation (\ref{eq:16}) have to be all $\bara$'s except for when $*=2$ in which case $\emm_2^\otimes(\bara_0, \barb_i)$, $i>0$, shows up in a commuting pair with $\emm_2^\otimes(\barb_i,\bara_0)$, and thus cancel each other.

Applying a similar argument in equations (\ref{eq:20}, \ref{eq:18}, \ref{eq:19}), we find that the entries in $\emm_*^\otimes$ have  to be all $\bara$'s or all $\barb$'s except when $*=2$ in which case $\emm_2^\otimes(\bara_i, \barb_j)$ shows up in commuting pairs with $\emm_2^\otimes(\barb_j, \bara_i)$, $i,j>0$, and thus cancel each other. Hence, each of equations (\ref{eq:20}, \ref{eq:18}, \ref{eq:19}) splits into two sums: one with $\emm_*^\otimes(...)$ replaced by $\emm_*^\AAA(...)$, and the other by $\emm^\BB_*(...)$ respectively according to Definition (\ref{commutingsubalgebradefn}). The left hand side now becomes
\begin{align}
    \Bee \circ k(\bfa; \bfb) &+\bee\circ K(\bfa;\bfb) = \notag\\
\sum& (-1)^{<+\dagger}\ 1\otimes 1 | \ \operatorname{sh}(\underline{a}^{(3)};\barb^{(3)},\barb_0,\barb^{(1)}), \emm_*^A\left(\bara^{(4)},\bara_0,\bara^{(1)}\right), \operatorname{sh}(\underline{a}^{(2)};\underline{b}^{(2)})  \label{eqAA.4}\\
+ \sum& (-1)^{<+\dagger}\ 1\otimes 1 | \ \operatorname{sh}(\underline{a}^{(2)};\underline{b}^{(3)}), \emm_*^B\left(\barb^{(4)},\barb_0,\barb^{(1)}\right), \operatorname{sh}(\bara^{(3)},\bara_0,\bara^{(1)};\underline{b}^{(2)})  \label{eqA.5}\\
+ \sum_{\barb_0 < \bara_0}& (-1)^{<+\dagger}\ 1\otimes 1 | \ \operatorname{sh}_0(\bara^{(4)},\bara_0,\bara^{(1)};\barb^{(3)},\barb_0,\barb^{(1)}), \emm_*^A\left(\underline{a}^{(2)}\right), \operatorname{sh}(\underline{a}^{(3)};\underline{b}^{(2)})  \label{eqA.6a}\\
+ \sum_{\barb_0 < \bara_0}& (-1)^{<+\dagger}\ 1\otimes 1 | \ \operatorname{sh}_0(\bara^{(3)},\bara_0,\bara^{(1)};\barb^{(4)},\barb_0,\barb^{(1)}), \emm_*^B\left(\underline{b}^{(2)}\right), \operatorname{sh}(\underline{a}^{(2)};\underline{b}^{(3)})  \label{eqA.6b}\\
+ \sum_{\barb_0 < \bara_0}& (-1)^{<+\dagger}\ 1\otimes 1 | \ \operatorname{sh}(\underline{a}^{(2)};\underline{b}^{(2)}), \emm_*^A\left(\underline{a}^{(3)}\right), \operatorname{sh}_0(\bara^{(4)},\bara_0,\bara^{(1)};\barb^{(3)},\barb_0,\barb^{(1)}) \label{eqA.7a}\\
+ \sum_{\barb_0 < \bara_0}& (-1)^{<+\dagger}\ 1\otimes 1 | \ \operatorname{sh}(\underline{a}^{(2)};\underline{b}^{(2)}), \emm_*^B\left(\underline{b}^{(3)}\right), \operatorname{sh}_0(\bara^{(3)},\bara_0,\bara^{(1)};\barb^{(4)},\barb_0,\barb^{(1)}) \label{eqA.7b}\\
+ \sum& (-1)^{<+\dagger}\ 1\otimes 1 | \ \operatorname{sh}(\underline{a}^{(2)};\barb^{(3)},\barb_0,\barb^{(1)}), \emm_*^A\left(\underline{a}^{(3)}\right), \operatorname{sh}(\bara^{(4)},\bara_0,\bara^{(1)};\underline{b}^{(2)}) \label{eqA.8a}\\
+ \sum& (-1)^{<+\dagger}\ 1\otimes 1 | \ \operatorname{sh}(\underline{a}^{(2)};\barb^{(4)},\barb_0,\barb^{(1)}), \emm_*^B\left(\underline{b}^{(2)}\right), \operatorname{sh}(\bara^{(3)},\bara_0,\bara^{(1)};\underline{b}^{(3)}) \label{eqA.8b}\\
+ \sum &(-1)^{\dagger+\kappa} \ a_0\otimes1 \ |\ \operatorname{sh}(\underline{a};N\underline{\bfb}) \label{eqA.9}\\
+ \sum &(-1)^{\dagger+\kappa+|b_0|} \ 1\otimes b_0 \ |\ \operatorname{sh}(N\underline{\bfa};\underline{b}) \label{eqA.10}
\end{align}

We now compute the right hand side term by term and show how they cancel out with the left hand side computed above.
\begin{align}
    k(\Bee \otimes \text{id})(\bfa; \bfb) &=  k\left((\sum (-1)^{@_\AAA}\ 1\ |\ a^{(2)},a_0, a^{(1)}); b_0 \ |\ b\right) \notag \\
&= \sum (-1)^{@_\AAA+ |b_0|(|a_0|+|a|')}\ 1\otimes b_0 \ |\ \operatorname{sh}(\bara^{(2)},\bara_0, \bara^{(1)}; \barb) \notag\\
&= \sum (-1)^{@_\AAA+ |b_0|(|a_0|+|a|')} 1\otimes b_0 \ |\ \operatorname{sh}(N\underline{\bfa};\underline{b})\label{EqA.15}\\
(-1)^{|\bfa|} k(\text{id}\otimes \Bee)(\mathbf{a}; \mathbf{b}) &= k\left(a_0 \ |\ a;(\sum (-1)^{@_\BB+|\bfa|}\ 1\ |\  b^{(2)},b_0, b^{(1)})\right) \notag \\
&= \sum (-1)^{@_\BB+ |\bfa|}\ a_0\otimes 1 \ |\ \operatorname{sh}(\bara; \barb^{(2)},\barb_0, \barb^{(1)}) \notag\\
&= \sum (-1)^{@_\BB+ |\bfa|}\ a_0\otimes 1 \ |\ \operatorname{sh}(\underline{a};N\underline{\bfb}) \label{EqA.16}
\end{align}
Equations (\ref{EqA.15}) and (\ref{EqA.16}) cancel out with  (\ref{eqA.10}) and (\ref{eqA.9}) respectively, where $@_A= |a^{(2)}|(|a_0|+| a^{(1)}|')$ and $@_B= |b^{(2)}|(|b_0|+| b^{(1)}|')$.
\begin{align}
K(\bee \otimes \text{id})(\mathbf{a}; \mathbf{b}) &= \sum_{\barb_0<\bara_0} (-1)^{|a_0|+|a^{(1)}|'} \ 1\otimes 1 |\operatorname{sh}(\bara^{(4)},\bara_0, \bara^{(1)}, \emm_*^A(\bara^{(2)}), \bara^{(3)};N\underline{\bfb}) \label{eqA.23}\\
+ \sum_{\barb_0<\emm_*^A(\bara^{(4)},\bara_0,\bara^{(1)})} &(-1)^{|a^{(3)}|'(|a_0|+|a^{(1)}|'+|a^{(2)}|')}\ 1\otimes 1|\operatorname{sh}(\bara^{(3)},\emm_*^A(\bara^{(4)},\bara_0,\bara^{(1)}), \bara^{(2)}; N\underline{\bfb})\label{eqA.24}
\end{align}
We break up the shuffle in (\ref{eqA.23}) according to whether $\emm_*^A(\bara^{(2)})$ is to the right, left, or middle of $\barb_0$ and $\bara_0$. Similarly, we break up the shuffle in (\ref{eqA.24}) rewriting it more explicitly with $\barb_0$ to the left of $\bara_0$ (and adjusting the signs appropriately) to obtain
\begin{align}
    K(\bee \otimes \text{id})(\mathbf{a}; \mathbf{b}) &= \sum_{\barb_0<\bara_0} (-1)^{|a_0|+|a^{(1)}|'} 1\otimes 1 | \ \operatorname{sh}_0(\bara^{(4)},\bara_0,\bara^{(1)};\barb^{(3)},\barb_0,\barb^{(1)}), \emm_*^A\left(\underline{a}^{(2)}\right), \operatorname{sh}(\underline{a}^{(3)};\underline{b}^{(2)}) \\
    + &\sum_{\barb_0<\bara_0} (-1)^{|a_0|+|a^{(1)}|'} 1\otimes 1 | \ \operatorname{sh}(\underline{a}^{(2)};\underline{b}^{(2)}), \emm_*^A\left(\underline{a}^{(3)}\right), \operatorname{sh}_0(\bara^{(4)},\bara_0,\bara^{(1)};\barb^{(3)},\barb_0,\barb^{(1)})\\
    + &\sum (-1)^{|a_0|+|a^{(1)}|'} 1\otimes 1 | \ \operatorname{sh}(\underline{a}^{(2)};\barb^{(3)},\barb_0,\barb^{(1)}), \emm_*^A\left(\underline{a}^{(3)}\right), \operatorname{sh}(\bara^{(4)},\bara_0,\bara^{(1)};\underline{b}^{(2)})\\
    + \sum (-1)&^{|a^{(3)}|'(|a_0|+|a^{(1)}|'+|a^{(2)}|')} 1\otimes 1 | \ \operatorname{sh}(\underline{a}^{(3)};\barb^{(3)},\barb_0,\barb^{(1)}), \emm_*^A\left(\bara^{(4)},\bara_0,\bara^{(1)}\right), \operatorname{sh}(\underline{a}^{(2)};\underline{b}^{(2)}) 
\end{align}
which cancel out respectively with (\ref{eqA.6a}), (\ref{eqA.7a}), (\ref{eqA.8a}), and (\ref{eqAA.4}). \\
Lastly, we compute
\begin{align}
    (-1)^{|\bfa|} K(\text{id} \otimes \bee)(\mathbf{a}; \mathbf{b}) &= \sum_{\barb_0<\bara_0} (-1)^{|b_0|+|b^{(1)}|' + |\bfa|} \ 1\otimes 1 |\operatorname{sh}_0(N\underline{\bfa};\barb^{(4)},\barb_0, \barb^{(1)}, \emm_*^B(\barb^{(2)}),\barb^{(3)})\label{eqA.29}\\
    + \sum_{\emm_*^B(\barb^{(4)},\barb_0,\barb^{(1)})<\bara_0} &(-1)^{|b^{(3)}|'(|a_0|+|a^{(1)}|'+|a^{(2)}|') + |\bfa|}\ 1\otimes 1|\operatorname{sh}_0(N\underline{\bfa};\barb^{(3)},\emm_*^B(\barb^{(4)},\barb_0,\barb^{(1)}), \barb^{(2)})\label{eqA.30}
\end{align}
Again, we break up the shuffle in (\ref{eqA.29}) according to whether $\emm_*^B(\bara^{(2)})$ is to the right, left, or middle of $\barb_0$ and $\bara_0$. Similarly, we break up the shuffle in (\ref{eqA.30}) rewriting it with $\barb_0$ to the left of $\bara_0$ (and adjusting the signs appropriately) to obtain
\begin{align}
    &\sum_{\barb_0<\bara_0} (-1)^{|b_0|+|b^{(1)}|' + |\bfa|} 1\otimes 1 |  \operatorname{sh}_0(\bara^{(3)},\bara_0,\bara^{(1)};\barb^{(4)},\barb_0,\barb^{(1)}), \emm_*^B\left(\underline{b}^{(2)}\right), \operatorname{sh}(\underline{a}^{(2)};\underline{b}^{(3)}) \\
    +& \sum_{\barb_0<\bara_0} (-1)^{|b_0|+|b^{(1)}|' + |\bfa|} \ 1\otimes 1 |  \operatorname{sh}(\underline{a}^{(2)};\underline{b}^{(2)}), \emm_*^B\left(\underline{b}^{(3)}\right), \operatorname{sh}_0(\bara^{(3)},\bara_0,\bara^{(1)};\barb^{(4)},\barb_0,\barb^{(1)}) \\
    +& \sum (-1)^{|b_0|+|b^{(1)}|' + |\bfa|} 1\otimes 1 | \ \operatorname{sh}(\underline{a}^{(2)};\barb^{(4)},\barb_0,\barb^{(1)}), \emm_*^B\left(\underline{b}^{(2)}\right), \operatorname{sh}(\bara^{(3)},\bara_0,\bara^{(1)};\underline{b}^{(3)}) \\
    + \sum (-1)&^{|b^{(3)}|'(|a_0|+|a^{(1)}|'+|a^{(2)}|') + |\bfa|}\ 1\otimes 1 | \operatorname{sh}(\underline{a}^{(2)};\underline{b}^{(3)}), \emm_*^B\left(\barb^{(4)},\barb_0,\barb^{(1)}\right), \operatorname{sh}(\bara^{(3)},\bara_0,\bara^{(1)} ;\underline{b}^{(2)})
\end{align}
which cancel out respectively with (\ref{eqA.6b}), (\ref{eqA.7b}), (\ref{eqA.8b}), and (\ref{eqA.5}). $\hfill\qed$
\newpage
\section{Appendix B}

\subsection{Proof of Lemma \ref{appendixlemma}}\label{secA.2}

We compute the LHS.
\vspace{-0.5cm}
\begin{align}
    \Bee\{\emm'\} \circ k (\bfa; \bfb)&= \Bee\{\emm'\}\left(\sum(-1)^\bullet \ a_0\otimes b_0 \ |\ \bigotimes_{i=1}^4\operatorname{sh}(\bara^{(i)};\barb^{(i)})\right)\notag\\
    =\sum(-1)^{\bullet+@+<} \ 1\otimes 1|\operatorname{sh}(\bara^{(2)};\barb^{(2)}),& \emm'_\otimes\left(\operatorname{sh}(\bara^{(3)};\barb^{(3)})\right), \operatorname{sh}(\bara^{(4)};\barb^{(4)}),a_0\otimes b_0, \operatorname{sh}(\bara^{(1)};\barb^{(1)}) \label{eqB}\\
    \text{where } \ \bullet=|b_0||\bfa|'&+(\sum_{i=1,\ j>i}^{3} |b^{(i)}|'|a^{(j)}|'), \ <\ =|a^{(2)}|'|b^{(2)}|'\notag\\
    \text{and } \ @ &=(|a_0|+|b_0|+|a^{(1)}|'+|b^{(1)}|')(\sum_{i=2}^4|a^{(i)}|'+|b^{(i)})|')\notag
\end{align}
Since $\emm'_2=0$ and by the commuting subalgebra relations (\ref{commutingsubalgebradefn}), we must have $\operatorname{sh}(\bara^{(3)};\barb^{(3)})= \bara^{(3)}$ or $\barb^{(3)}$ but not both, in equation (\ref{eqB}), so we split it as
\begin{align}
    \Bee\{\emm'\} \circ k (\bfa; \bfb)&= \notag\\
    \sum(-1)^{\bullet+@+<} \ 1\otimes 1|\operatorname{sh}(\bara^{(2)};\barb^{(2)}),& \emm'_A\left(\bara^{(3)}\right), \operatorname{sh}(\bara^{(4)};\barb^{(3)}),a_0\otimes b_0, \operatorname{sh}(\bara^{(1)};\barb^{(1)}) \label{eqB.2}\\
    \sum(-1)^{\bullet+@+<} \ 1\otimes 1|\operatorname{sh}(\bara^{(2)};\barb^{(2)}),& \emm'_B\left(\barb^{(3)}\right), \operatorname{sh}(\bara^{(3)};\barb^{(4)}),a_0\otimes b_0, \operatorname{sh}(\bara^{(1)};\barb^{(1)})\label{eqB.3}
\end{align}
\begin{align}
    k\circ (\Bee\{\emm'\} \bfa\otimes \bfb) &= \sum(-1)^{@_A+<_A+|b_0||a|'+1}\ 1\otimes b_0|\operatorname{sh}\left(\bara^{(2)}, \emm'_A(\bara^{(3)}), \bara^{(4)}, a_0, \bara^{(1)}; \barb\right)\label{eqB.4}\\
    (-1)^{|\bfa|}k(\bfa;  \Bee\{\emm'\}\bfb)&= \sum(-1)^{@_B+<_B+|\bfa|}\ a_0\otimes 1|\operatorname{sh}\left(\bara;\barb^{(2)}, \emm'_B(\barb^{(3)}), \barb^{(4)}, b_0, \barb^{(1)}\right)\label{eqB.5}\\
    \text{where } <_A=&|a^{(2)}|' ;\ \ <_B= |b^{(2)}|'\notag
\end{align}
\begin{align}
    \bee\{\emm'\} \circ K(\bfa; \bfb)&= \sum_{\barb_0<\bara_0} (-1)^{\bullet+@+<} \notag\\
    \emm_*^\otimes(\operatorname{Sh}(\bfa;\bfb)^{(3)},& \emm'_\otimes(\operatorname{Sh}(\bfa;\bfb)^{(4)}), \operatorname{Sh}(\bfa;\bfb)^{(5)}, 1\otimes 1, \operatorname{Sh}(\bfa;\bfb)^{(1)})|\operatorname{Sh}(\bfa;\bfb)^{(2)}\label{eqB.6}\\
    \text{where } \ \bullet=&(\sum_{i=1,\ j>i}^{4} |b^{(i)}|'|a^{(j)}|'), \ < \ = |a^{(3)}|'+|b^{(3)}|' \notag
\end{align}
Again, since $\emm'_2=0$ and by the commuting subalgebra relations, the cyclic shuffle term above $\operatorname{Sh}(\bfa;\bfb)^{(4)}= \bfa^{(4)}$ or $\bfb^{(4)}$ but not both. Also, by strict unitality, $*=2$. We also break up the sum (\ref{eqB.6}) into whether $\bara_0$ is an input of $ \emm'$, or not. Observe that the restriction $\barb_0< \bara_0$ means $\barb_0$ is not an input of $ \emm'$. So we obtain
\begin{align}
    \bee\{\emm'\} \circ K(\bfa; \bfb)&= \sum_{\barb_0<\bara_0} (-1)^{\bullet+@} \emm_2^\otimes\left(\emm'_A(\bara^{(2)}), 1\otimes1\right)|\operatorname{sh}_0(\bara^{(3)},\bara_0,\bara^{(1)},N\underline{\bfb})\label{eqB.7a}\\
    &+ \sum_{\barb_0<\bara_0} (-1)^{\bullet+@} \emm_2^\otimes\left(\emm'_B(\barb^{(2)}), 1\otimes1\right)|\operatorname{sh}_0(N\underline{\bfa},\barb^{(3)},\barb_0,\barb^{(1)})\label{eqB.8a}\\
    &+ \sum (-1)^{\bullet+@} \emm_2^\otimes\left(\emm'_A(\bara^{(3)},\bara_0,\bara^{(1)}), 1\otimes1\right)|\operatorname{sh}(\bara^{(2)},N\underline{\bfb}) \label{eqB.7} 
\end{align}

\footnotesize{
\begin{align}
    K\circ (\bee\{\emm'\} \bfa; \bfb)&= \sum_{\barb_0<\emm_*^A(\bara^{(3)}, \emm'_A(\bara^{(4)}), \bara^{(5)},\bara_0,\bara^{(1)}}(-1)^{@_A+<_A} \ 1\otimes1|\operatorname{sh}_0\left(\emm_*^A(\bara^{(3)}, \emm'_A(\bara^{(4)}), \bara^{(5)},\bara_0,\bara^{(1)})| \bara^{(2)}; N\bfb\right)\label{eqB.10}\\
    (-1)^{|\bfa|}K(\bfa ;  \bee\{\emm'\}\bfb)&= \sum_{\emm_*^B(\barb^{(3)}, \emm'_B(\barb^{(4)}), \barb^{(5)},\barb_0,\barb^{(1)}<\bara_0}(-1)^{@_A+@_B+|\bfa|} \ 1\otimes1|\operatorname{sh}_0\left(N\bfa;\emm_*^B(\barb^{(3)}, \emm'_B(\barb^{(4)}), \barb^{(5)},\barb_0,\barb^{(1)})| \barb^{(2)}\right)\label{eqB.11}
\end{align}
}
\normalsize
\begin{align}
    -k (\mathcal{L}_{\emm'} \otimes \Bee)(\bfa \otimes \bfb)&=\sum(-1)^{1+<_A+@_B}\ a_0\otimes1|\operatorname{sh}(\bara^{(1)}, \emm'_A(\bara^{(2)}), \bara^{(3)}; \barb^{(2)}, \barb_0, \barb^{(1)}) \label{eqB.12}\\
    +&\sum(-1)^{1+@_A+@_B} \ \emm'_A(a^{(3)},a_0,a^{(1)})\otimes1|\operatorname{sh}(\bara^{(2)}; \barb^{(2)}, \barb_0, \barb^{(1)})\label{eqB.8}
\end{align}
Since the shuffle term in equation (\ref{eqB.8}) cyclically permutes $\underline{\bfb}$,
it is exactly the shuffle term in  (\ref{eqB.7}). A computation of the signs show that they cancel each other. We will show that the rest of the above terms are canceled by the terms in $\bee\circ H$.
\begin{align}
    \bee\circ H(\bfa; &\bfb)=\bee\left(\sum_{\barb_0<\bara_0}(-1)^{@_A+@_B+|a^{(2)}|'} \ 1\otimes 1|\operatorname{sh}(\bara^{(2)}, \emm'_A(\bara^{(3)}),\bara^{(4)},\bara_0,\bara^{(1)}; \barb^{(2)},\barb_0,\barb^{(1)})\right) \label{eqB.14}\\
    +\bee&\left(\sum_{\barb_0<\bara_0}(-1)^{@_A+@_B+|\bfa|+|b^{(2)}|'} \ 1\otimes 1|\operatorname{sh}(\bara^{(2)},\bara_0,\bara^{(1)};\barb^{(2)}, \emm'_B(\barb^{(3)}),\barb^{(4)},\barb_0,\barb^{(1)})\right)\label{eqB.15}
\end{align}
Equations (\ref{eqB.2}) and (\ref{eqB.3}) are canceled in (\ref{eqB.14}) and (\ref{eqB.15}) when $\bee$ acts by applying $\emm_2^\otimes$ to $(\barb_0,\bara_0)$ (with the correct signs). The restriction $\barb_0<\bara_0$ guarantees these do not come in commuting pairs.

Equation (\ref{eqB.4}) is canceled in (\ref{eqB.14}) when $\bee$ acts by applying $\emm_2^\otimes$ before "$|$" to $(1\otimes 1, 1\otimes \barb_0)$. Equations (\ref{eqB.5}) and (\ref{eqB.12}) are canceled in (\ref{eqB.15}) and (\ref{eqB.14}) respectively, when $\bee$ acts by applying $\emm_2^\otimes$ before "$|$" to $(\bara_0\otimes 1, 1\otimes 1)$.

Similarly, (\ref{eqB.7a}) and (\ref{eqB.8a}) are canceled in (\ref{eqB.14}) and (\ref{eqB.15}) when $\bee$ acts by applying $\emm_2^\otimes$ to $(1\otimes1,\emm'_A(a^{(2)}))$, and to $(1\otimes1,\emm'_B(b^{(2)}))$ respectively. By strict unitality, and the restriction $\barb_0<\bara_0$, these are the only possibilities for applying $\emm_2^\otimes$ on the "outside" before "$|$", with $1\otimes 1.$ All the other cases $\emm_2^\otimes(a_i\otimes1, 1\otimes1)$ and $\emm_2^\otimes(1\otimes1, 1\otimes b_i)$ come in commuting pairs and thus cancel each other.

Equations (\ref{eqB.10}) and (\ref{eqB.11}) are canceled in (\ref{eqB.14}) and (\ref{eqB.15}) when $\bee$ acts by applying $\emm_*^\otimes$ to $(\bara^{(3)}, \emm'_A(\bara^{(4)}), \bara^{(5)}, \bara_0,\bara^{(1)})$, and on $(\barb^{(3)}, \emm'_B(\barb^{(4)}), \barb^{(5)}, \barb_0,\barb^{(1)})$.

The remaining terms in $\bee\circ H$ unaccounted for are:
\begin{align}
    \sum_{\barb_0<\bara_0} (-1)&^{@_A+@_B+|\bfa|'+|b_0|+|b^{(1)}|'+|b^{(4)}|'} \ 1\otimes 1|\operatorname{sh}(\bara^{(2)}, \emm'_A(\bara^{(3)}),\bara^{(4)},\bara_0,\bara^{(1)}; \barb^{(4)},\barb_0,\barb^{(1)},\emm_*^B(\barb^{(2)}),\barb^{(3)}) \label{eqB.17}\\
   +\sum_{\barb_0<\bara_0} &(-1)^{@_A+@_B+|b^{(2)}|'+|a^{(2)}|'+|a^{(3)}|'} 1\otimes1|\operatorname{sh}(\bara^{(4)}, \bara_0, \bara^{(1)}, \emm_*^A(\bara^{(2)}), \bara^{(3)}; \barb^{(2)}, \emm'_B(\barb^{(3)}), \barb^{(4)}, \barb_0,  \barb^{(1)})\label{eqBB.14}\\
   +\sum_{\barb_0<\bara_0}& (-1)^{@_A+@_B+|\bfa|'} 1\otimes 1|\operatorname{sh}(\bara^{(3)}, \emm'_A(\bara^{(4)}),\bara^{(5)},\emm_*(\bara^{(6)}, \bara_0,\bara^{(1)}),\bara^{(2)}; \barb^{(2)},\barb_0,\barb^{(1)})\label{eqBB.15}\\
   + \sum_{\barb_0<\bara_0}& (-1)^{@_A+@_B+|\bfa|'+|b^{(3)}|'+|a^{(2)}|'} \ 1\otimes 1|\operatorname{sh}(\bara^{(3)}, \emm_*^A(\bara^{(4)},\bara_0,\bara^{(1)}),\bara^{(2)}; \barb^{(2)}, \emm'_B(\barb^{(3)}), \barb^{(4)},\barb_0,\barb^{(1)})\label{eqB.16}\\
   + \sum_{\barb_0<\bara_0}& (-1)^{@_A+@_B+|\bfa|'+|a^{(2)}|'+|b^{(3)}|'} \ 1\otimes 1|\operatorname{sh}(\bara^{(2)}, \emm'_A(\bara^{(3)}), \bara^{(4)},\bara_0,\bara^{(1)};\barb^{(3)}, \emm_*^B(\barb^{(4)},\barb_0,\barb^{(1)}),\barb^{(2)})\label{eqB.22}\\
   + \sum_{\barb_0<\bara_0}& (-1)^{@_A+@_B+|b^{(2)}|'+|b^{(3)}|'} \ 1\otimes 1|\operatorname{sh}(\bara^{(2)},\bara_0,\bara^{(1)};\barb^{(3)}, \emm'_B(\barb^{(4)}),\barb^{(5)},\emm_*^\otimes(\barb^{(6)}, \barb_0,\barb^{(1)}),\barb^{(2)})\label{eqB21}\\
   + \sum_{\barb_0<\bara_0}& (-1)^{@_A+@_B} \ 1\otimes 1|\operatorname{sh}(\bara^{(2)}, \emm_*^A(\bara^{(3)},\emm'_A(\bara^{(4)}), \bara^{(5)}),\bara^{(6)},\bara_0,\bara^{(1)}; \barb^{(2)},\barb_0,\barb^{(1)})\label{eqB.29-30}\\
   + \sum_{\barb_0<\bara_0}& (-1)^{@_A+@_B} \ 1\otimes 1|\operatorname{sh}(\bara^{(2)},\bara_0,\bara^{(1)}; \barb^{(2)},\emm_*^B(\barb^{(3)},\emm'_B(\barb^{(4)}),\barb^{(5)}),\barb^{(6)},\barb_0,\barb^{(1)})\label{eqB31}
\end{align}
We will show that these are all canceled by terms in $H\circ\bee$.

Now $H\circ\bee(\bfa; \bfb) = H(\bee(\bfa); \bfb) + (-1)^{|\bfa|} H(\bfa; \bee(\bfb))$, so we compute term by term.
\begin{align}
    H(\bee(\bfa); \bfb) & =\notag\\
    \sum_{\barb_0<\bara_0}(-1)^{@_A+@_B+|a^{(2)}|'+<_A} \ &1\otimes 1|\operatorname{sh}(\bara^{(2)}, \emm'_A(\bara^{(3)}),\bara^{(4)},\emm_*(\bara^{(5)}), \bara^{(6)},\bara_0,\bara^{(1)}; \barb^{(2)},\barb_0,\barb^{(1)})\label{eqB.11X}\\
    +\sum_{\barb_0<\bara_0}(-1)^{@_A+@_B+|a^{(2)}|'+<_A} \ &1\otimes 1|\operatorname{sh}(\bara^{(2)}, \emm'_A(\bara^{(3)},\emm_*(\bara^{(4)}), \bara^{(5)}),\bara^{(6)},\bara_0,\bara^{(1)}; \barb^{(2)},\barb_0,\barb^{(1)})\label{eqB.29-30X}\\
    +\sum_{\barb_0<\bara_0}(-1)^{@_A+@_B+|a^{(2)}|'+<_A} \ &1\otimes 1|\operatorname{sh}(\bara^{(2)}, \emm_*(\bara^{(3)}), \bara^{(4)},\emm'_A(\bara^{(5)}), \bara^{(6)},\bara_0,\bara^{(1)}; \barb^{(2)},\barb_0,\barb^{(1)})\label{eqB.13X}\\
    +\sum_{\barb_0<\bara_0}(-1)^{@_A+@_B+<_A+|\bfa|'+|b^{(2)}|'}\ &1\otimes1|\operatorname{sh}(\bara^{(4)}, \bara_0, \bara^{(1)}, \emm_*^A(\bara^{(2)}), \bara^{(3)}; \barb^{(2)}, \emm'_B(\barb^{(3)}), \barb^{(4)}, \barb_0,  \barb^{(1)})\label{eqB.14X}\\
    +\sum_{\barb_0<\bara_0}(-1)^{@_A+|a^{(3)}|'} \ 1\otimes 1|\operatorname{sh}(&\bara^{(3)}, \emm'_A(\bara^{(4)}),\bara^{(5)},\emm_*(\bara^{(6)}, \bara_0,\bara^{(1)}),\bara^{(2)}; \barb^{(2)},\barb_0,\barb^{(1)})\label{eqB.15X}\\
    +\sum_{\barb_0<\bara_0}(-1)^{@_A+|\bfa|+|b^{(2)}|'} \ 1\otimes 1|\operatorname{sh}(&\bara^{(3)}, \emm_*^A(\bara^{(4)},\bara_0,\bara^{(1)}),\bara^{(2)}; \barb^{(2)}, \emm'_B(\barb^{(3)}), \barb^{(4)},\barb_0,\barb^{(1)})\label{eqB.16X}
\end{align}
\begin{align}
    (-1)^{|\bfa|}&H(\bfa; \bee(\bfb))= \notag\\
    \sum_{\barb_0<\bara_0}(-1)^{|\bfa|+@_A+@_B+|a^{(2)}|'+<_B} \ 1\otimes 1|&\operatorname{sh}(\bara^{(2)}, \emm'_A(\bara^{(3)}),\bara^{(4)},\bara_0,\bara^{(1)}; \barb^{(4)},\barb_0,\barb^{(1)},\emm_*^B(\barb^{(2)}),\barb^{(3)}) \label{eqB.17X}\\
    +\sum_{\barb_0<\bara_0}(-1)^{|\bfa|+@_A+@_B+|b^{(2)}|'+<_B} \ 1\otimes 1|&\operatorname{sh}(\bara^{(2)},\bara_0,\bara^{(1)}; \barb^{(2)},\emm'_B(\barb^{(3)}),\barb^{(4)},\emm_*^B(\barb^{(5)}),\barb^{(6)},\barb_0,\barb^{(1)}) \label{eqB.18X}\\
    +\sum_{\barb_0<\bara_0}(-1)^{|\bfa|+@_A+@_B+|b^{(2)}|'+<_B} \ 1\otimes 1|&\operatorname{sh}(\bara^{(2)},\bara_0,\bara^{(1)}; \barb^{(2)},\emm'_B(\barb^{(3)},\emm_*^B(\barb^{(4)}),\barb^{(5)}),\barb^{(6)},\barb_0,\barb^{(1)})\label{eqB31X}\\
    +\sum_{\barb_0<\bara_0}(-1)^{|\bfa|+@_A+@_B+|b^{(2)}|'+<_B} \ 1\otimes 1|&\operatorname{sh}(\bara^{(2)},\bara_0,\bara^{(1)}; \barb^{(2)},\emm_*^B(\barb^{(3)}),\barb^{(4)},\emm'_B(\barb^{(5)}),\barb^{(6)},\barb_0,\barb^{(1)})\label{eqB.20X}\\
    +\sum_{\barb_0<\bara_0}(-1)^{@_A+@_B+|a^{(2)}|'+|\bfa|} \ 1\otimes 1|\operatorname{sh}(&\bara^{(2)},\bara_0,\bara^{(1)};\barb^{(3)}, \emm'_B(\barb^{(4)}),\barb^{(5)},\emm_*^\otimes(\barb^{(6)}, \barb_0,\barb^{(1)}),\barb^{(2)})\label{eqB.21X}\\
    +\sum_{\barb_0<\bara_0}(-1)^{@_A+@_B+|b^{(3)}|'+|\bfa|} \ 1\otimes 1|\operatorname{sh}(&\bara^{(2)}, \emm'_A(\bara^{(3)}), \bara^{(4)},\bara_0,\bara^{(1)};\barb^{(3)}, \emm_*^B(\barb^{(4)},\barb_0,\barb^{(1)}),\barb^{(2)}) \label{eqB.22X}
    \end{align}
We find that (\ref{eqB.17}) cancels with (\ref{eqB.17X}); (\ref{eqBB.14}) cancels with (\ref{eqB.14X}); (\ref{eqBB.15}) cancels with (\ref{eqB.15X}); (\ref{eqB.16}) cancels with (\ref{eqB.16X}); (\ref{eqB.22}) cancels with (\ref{eqB.22X}); (\ref{eqB21}) cancels with (\ref{eqB.21X}). Further, (\ref{eqB.18X}) and (\ref{eqB.20X}) cancel each other, as well as (\ref{eqB.11X}) and (\ref{eqB.13X}). Lastly, (\ref{eqB.29-30}) and (\ref{eqB31}) cancel with (\ref{eqB.29-30X}) and (\ref{eqB31X}) respectively since $[\emm_*, \emm']=0$. $\hfill\qed$

\printbibliography


\end{document}